\documentclass{article}
\usepackage[utf8]{inputenc}

\usepackage{amsthm,amsmath,stmaryrd,bbm,hyperref,geometry,color,authblk}
\usepackage{amssymb}
\usepackage[english]{babel}
\usepackage{graphicx}
\usepackage{amsfonts,amssymb}
\usepackage{verbatim}
\usepackage{enumerate}
 
\newcommand{\po}{\left(}
\newcommand{\pf}{\right)}
\newcommand{\co}{\left[}
\newcommand{\cf}{\right]}
\newcommand{\cco}{\llbracket}
\newcommand{\ccf}{\rrbracket}
\newcommand{\R}{\mathbb R}

\newcommand{\N}{\mathbb N} 
\renewcommand{\H}{\mathbb H} 
 
\newcommand{\tF}{\widetilde{\mathcal F}} 
\newcommand{\dd}{\text{d}}
\newcommand{\bX}{\mathbf{X}}
\newcommand{\bx}{\mathbf{x}}
\newcommand{\by}{\mathbf{y}}
\newcommand{\bY}{\mathbf{Y}}
\newcommand{\na}{\nabla}

\newtheorem{theorem}{Theorem}
\newtheorem{assumption}{Assumption}
\newtheorem{lemma}[theorem]{Lemma}

\newtheorem{proposition}[theorem]{Proposition}
\newtheorem{remark}{Remark}

\title{A two-scale approach for sampling local free energy minimisers}

\author[1]{Antoine Leclerc}
\author[2,3]{Pierre Monmarché\thanks{pierre.monmarche@univ-eiffel.fr}}
\affil[1]{Mines de Saint-Etienne, France}
\affil[2]{LAMA, Université Gustave Eiffel, France}
\affil[3]{Institut Universitaire de France}
\date{\today}

\begin{document}

\maketitle

\begin{abstract}
When a free energy admits several local minimisers, the associated mean-field particle system exhibits a metastable behavior, undergoing random transitions between these minimisers. However, when the number of particles is large, these transitions are rare events and typically occur at a time-scale which is out of reach for simulations. In this work, we introduce a system of particles coupled with a slow macroscopic external variable which approximately follows a Langevin dynamics associated to a coarse-grained free energy. The empirical distribution of the particle system thus samples all local minimisers of the initial free energy, with transition times which are now independent from the number of particles. The construction is inspired by a theoretical tool: the two-scale approach for proving log-Sobolev inequalities for Gibbs measures.
\end{abstract}

\section{Introduction}

\subsection{Overview}

\paragraph{Motivation.} Consider a so-called free energy 
\begin{equation}
\label{eq:energieLibre}
\mathcal F(\rho) = \mathcal E(\rho) + \sigma^2 \mathcal H(\rho)\,,
\end{equation}
defined over $\mathcal P_2(\R^d)$ the set of probability measures $\rho$ with finite second moment, where $\mathcal E:\mathcal P_2(\R^d) \rightarrow (-\infty,\infty]$ is some energy functional, $\mathcal H(\rho) = \int_{\R^d} \rho \ln \rho$ is the entropy (equal to $+\infty$ if $\rho$ doesn't have a Lebesgue density) and $\sigma^2>0$ is a temperature parameter, which we fix equal to $1$ without loss of generality from now on (unless otherwise specified, when discussing specific models, particularly for  numerical experiments). We are interested in the situation where $\mathcal F$ has several local minimisers, and our goal is to sample them. One of the main interest is to find, among these local minimisers, the global ones, but finding all local minimisers is also of interest. Indeed, these  are stable solutions of the gradient flow associated to $\mathcal F$, and their study is related for instance to questions of phase transitions, clustering, localization in statistical physics~\cite{tugaut2023steady,bashiri2020gradient,sandier2004gamma,HairerPavliotis}. The question of minimizing free energies over $\mathcal P_2(\R^d)$ also arises in machine learning~\cite{Szpruch,mei2018mean,geshkovski2025mathematical}, optimisation~\cite{chizat,peyre2015entropic}, mathematical biology~\cite{bunne2022proximal}, variational inference~\cite{arbel2019maximum,lambert2022variational} and sampling~\cite{lelievre2026convergence}. For instance, clustering is an important feature in the attention mechanism used in transformers~\cite{geshkovski2025mathematical,ShalovaSchlichting+2026}.

The gradient flow of $\mathcal F$ (with $\sigma^2=1$) is the solution of 
\begin{equation}
\label{eq:gradientflowF}
\partial_t \rho_t = \na\cdot \po \rho_t D\mathcal E \pf + \Delta \rho_t
\end{equation}
with the intrinsic derivative $D\mathcal E$ given in~\eqref{def:intrinsic} below. Similarly to the finite-dimensional situation, a way to sample different local minimiser is to follow~\eqref{eq:gradientflowF} starting from different random initial conditions (for instance normal distributions with random means and covariance matrices). However, in order to find several minimisers of a function $f\in\mathcal C^1(\R^d,\R)$, an alternative to randomized initialisations is to perturbate the gradient descent with a Brownian noise, leading to the so-called overdamped Langevin diffusion process, solving
\begin{equation}
\label{eq:Langevin}
\dd X_t = -\na f(X_t) \dd t + \sqrt{2/\beta} \dd B_t\,,
\end{equation}
with $\beta^{-1}$ a temperature parameter and $B$ a $d$-dimensional Brownian motion. Under suitable mild conditions on $f$, this process is ergodic with respect to the Gibbs measure with density proportional to $\exp(-\beta f)$. In particular, given enough time, it will pass near all local minimisers. When $\beta$ is small, the process behaves like a Brownian motion, and thus in high dimension it will take a very long time to actually get near a given local minimiser.  When $\beta$ is large, the process is metastable, i.e. it spends most of its time in the vicinity of local minimisers, jumping from one to the other along some rare transitions. A situation where~\eqref{eq:Langevin} is particularly relevant is when the set of local minimisers of interest are all in the vicinity of a manifold of dimension much smaller than the ambient space $\R^d$. This is for instance the case in molecular simulations, where $x$ is the position of nuclei and $f$ is the energy of the system. In this situation,  $d=3n$ with $n$ the number of atoms (ranging from $100$ to $10^9$ in typical applications), but the configurations of atoms are very structured (for instance the interatomic bonds in a protein are fixed and prevent high variations of the corresponding interatomic distances) so that the region of low energies, containing all stable configurations of interest, is concentrated around a manifold of dimension much smaller. However, no parametrization of this manifold is available. In particular, the idea of following the  gradient flow of $f$ starting from random initial conditions is not practicable, as no good initial distribution is known; when resorting to uninformed distributions (e.g. isotropic Gaussian), with overwhelming probability, the gradient flow get stuck in a completely non-physical configuration (i.e. a local minimiser $f$ with an extremely large value with respect to $\inf f$). However, provided that a single reasonable $x_0$ in the low-energy region is already known, we can use~\eqref{eq:Langevin} to explore locally the manifold, taking $\beta$ large enough to stay in its vicinity and small enough to undergo the energy barriers within the manifold. From this simple strategy, multi-scale variations can be considered, for instance by using a trajectory at high temperature $\beta^{-1}$ to give initial conditions for trajectories at smaller temperature, by running a single trajectory with a temperature schedule $t\mapsto \beta_t^{-1}$ that decays with time (simulated annealing  or sequential Monte Carlo~\cite{del2006sequential,annealing,chehab2024provable}) or by having several trajectories exchanging their temperatures at random times (parallel tempering or replica exchange~\cite{Marinari_1992,neal1996sampling}).

\medskip

In view of this discussion, our goal is to provide an analogue of~\eqref{eq:Langevin} when $(f,\R^d)$ is replaced by $(\mathcal F,\mathcal P_2(\R^d))$. This requires to perturbate the gradient flow~\eqref{eq:gradientflowF} with a stochastic noise. 

\paragraph{Related literature.} This question has already been tackled with various approaches: using the theory of Dirichlet forms on metric spaces \cite{dello2022dirichlet,overbeck1995analytic,ren2024diffusion,von2009entropic,Shao,ren2024markov}, working  in a space of functions in bijection with (a subset of) $\mathcal P(\R^d)$, for instance repartition functions in one dimension \cite{delarue2024rearranged,delarue2024rearranged2,von2009entropic} or densities  in some Hilbert space with a trace-class Gaussian distribution as the reference measure  \cite{EberleBouRabee}, or perturbating the infinite dimensional equation~\eqref{eq:gradientflowF} by a finite-dimensional noise~\cite{delarue2024ergodicity,germain2025stochastic}. There are various motivations for these stochastic perturbations: the goal might be to define a canonical reversible process with respect to the entropy (which would thus really be an infinite-dimensional analogue of the Langevin dynamics~\eqref{eq:Langevin}), or to exploit  the regularization properties of the noise to get well-posed equations, as in \cite{delarue2024rearranged}, or to  induce ergodicity (i.e. uniqueness of a stationary distribution) in cases where the deterministic flow has several stationary solutions, as in \cite{ANGELI2023127301,delarue2024ergodicity,germain2025stochastic,maillet2023note,Germain_applied}.

As we will see, by some aspects, the viewpoint taken in the present work is close to the one in \cite{delarue2024ergodicity,germain2025stochastic} since we will only need a finite-dimensional Brownian noise (which is convenient for practical implementation) and, as motivated above, our primary goal is to ensure the ergodicity of the dynamics, in order to visit all local minimisers of the free energy. However, in~\cite{delarue2024ergodicity,germain2025stochastic}, the fact that the initial equation is a gradient flow doesn't play any role, and the measure which is sampled by the stochastic process is completely unknown. Noise is simply there to allow for exploration. If the noise is small enough then the stochastic dynamics can be expected to be attracted by stable solutions of the deterministic equation when it gets close to it, but the behavior of the process outside the vicinity of these stable solutions  is not controlled (hence, neither is the time spent on average far from these stable solutions). By contrast, our stochastic perturbation will be designed to target a specific equilibrium measure, highly relying on the gradient flow structure of~\eqref{eq:gradientflowF} and the specific form of the free energy $\mathcal F$. For this reason, it is less versatile than the approach in~\cite{delarue2024ergodicity,germain2025stochastic} but  closer   to the Langevin process~\eqref{eq:Langevin}. In particular, the time spent by the process in the vicinity of local minimisers and the transition times between different minimisers is controlled as in the Langevin case~\eqref{eq:Langevin} (in terms of $\beta$).

\paragraph{Organisation.} The rest of this work is organised as follows. In the rest of the introduction, we introduce our two-scale sampler (first, by presenting the mean-field particle system associated to~\eqref{eq:gradientflowF} in Section~\ref{subsec:introparticles}, then describing the two-scale approach and motivating the sampler in a simple case with quadratic interaction in Section~\ref{sec:intro_twoscale}, finally extending this definition to more general cases in Section~\ref{subsec:generalcase}), and compare it to the family of enhanced-sampling methods based on collective variables in Section~\ref{sec:CVenhancesampling}. Some general notations are gathered in Section~\ref{subsec:def-not}. Two approximations are made in the definition of the process, since a finite number $N$ of particles  and a positive time-scale separation parameter $\varepsilon$ are used: the theoretical results describing the convergence of the process to the idealized sampler as $N\rightarrow \infty$ and $\varepsilon\rightarrow 0$ are stated in Section~\ref{sec:theory}. Section~\ref{sec:numerique} is devoted to numerical experiments. Finally, the proofs of the theoretical results are provided in Section~\ref{sec:proof}.

\subsection{Mean-field interacting particle system}\label{subsec:introparticles}

The system of $N$ interacting particles associated to the free energy $\mathcal F$ is $\bX_t = (X_t^1,\dots,X_t^N)$ solving
\begin{equation}
\label{eq:LangevinN}
\dd \bX_t = -\na U_N(\bX_t) \dd t  + \sqrt{2}\dd \mathbf{B}_t,
\end{equation}
with $\mathbf{B}$ a $dN$-Brownian motion and
\[U_N(\bx) = N \mathcal E\po \pi_{\bx}\pf,\qquad \pi_{\bx} = \frac1N\sum_{i=1}^N \delta_{x_i}\,.\]
Under suitable conditions on $\mathcal E$, it is well-known that for all $t\geqslant 0$, $\pi_{\bX_t} $ converges almost surely to the solution $\rho_t$ of~\eqref{eq:gradientflowF}, provided this holds at $t=0$. This is referred to as the mean-field limit (associated to the propagation of chaos phenomenon, according to which $X_t^1$ and $X_t^2$ are asymptotically independent as $N\rightarrow \infty$ provided this holds at time $t=0$). The fluctuations of this convergence have been studied in \cite{dawson1983critical,fernandez1997hilbertian} (see~\cite{bernou2026uniform} for further more recent works) and can be informally described by the fact that, in some weak sense,
\[\dd \pi_{\bX_t} \underset{N\rightarrow\infty} \simeq -\na_{\mathcal W_2}\mathcal F(\pi_{\bX_t}) \dd t + \frac{1}{\sqrt{N}} \sqrt{\pi_{\bX_t}} \dd \xi_t\]
for some Gaussian noise $\xi$, where $-\na_{\mathcal W_2}\mathcal F$ stands for the right hand side of~\eqref{eq:gradientflowF}. At this point, we may think that we have achieved our goal since the empirical distribution of particles evolves according to a stochastic perturbation of the gradient flow. However, we need $N$ to be large to have a good approximation of the gradient flow, and in that case the noise intensity is very small. Transitions between different local minimisers occur at a time-scale of order $e^{cN}$ for some $c>0$ \cite{Monmarchemetastable}, and thus are never seen in practical simulations.

By contrast, we are aiming at defining a process where the noise persists in the limit $N\rightarrow \infty$.

\medskip

At a fixed $N$, $(\bX_t)_{t\geqslant 0}$ is an overdamped Langevin diffusion and, under suitable conditions, it is ergodic with respect to the Gibbs measure
\[\mu_N \propto \exp(-U_N)\,.\]
Under this measure, $\pi_{\bX}$ is concentrated around the minimisers of $\mathcal F$ for large $N$. However, when $\mathcal F$ has several local minimisers, sampling this measure with~\eqref{eq:LangevinN} requires a time which is exponentially large with $N$ since we have to wait for $\pi_{\bX_t}$ to undergo several rare transitions, and is thus unfeasible in practice. This can be interpreted in terms of the log-Sobolev constant of $\mu_N$, which is the largest constant $\lambda_N>0$ such that
\begin{equation}
\label{eq:LSI}
\forall \nu\in \mathcal P_2(\R^{dN}),\qquad \mathcal H\po \nu|\mu_N\pf \leqslant \frac{1}{2\lambda_N} \mathcal I(\nu|\mu_N )\,
\end{equation}
(with the relative entropy and Fisher information given in~\eqref{eq:HI}). The long-time convergence rate of~\eqref{eq:LangevinN} to $\mu_N$ in relative entropy is $\lambda_N$, see \cite[Theorem 5.2.1]{BakryGentilLedoux}. When $\mathcal F$ has several local minimisers, it vanishes exponentially fast with $N$ \cite{Monmarchemetastable} (see \cite{lelievre2012two} for further details on this topic in finite dimension).

\subsection{Two-scale approach}\label{sec:intro_twoscale}

Our work is motivated by the two-scale approach used in \cite{bauerschmidt2019very,bauerschmidt2025criterion,Monmarchemetastable,MonmarcheEquivalence} (see references within for previous works) for establishing log-Sobolev inequalities (LSI)~\eqref{eq:LSI} for $\mu_N$ (with $\lambda_N$ independent from $N$ in these works). It relies on a representation of $\mu_N$ in the spirit of De Finetti's theorem, i.e. as a mixture of tensorised measures (or at least simpler Gibbs measures for which a uniform-in-$N$ LSI is known by some convexity argument \cite{wang2024uniform,M61}).  Let us describe this in the simple case where
\begin{equation}
\label{eq:ECurieWeiss}
\mathcal E(\rho) = \int_{\R^d} V(x) \rho(x)\dd x + \frac14 \int_{\R^d} |x-y|^2 \rho(x)\rho(y) \dd x \dd y = \int_{\R^d} V_1(x)\rho(x)\dd x - \frac12 \po \int_{\R^d} x\rho(x)\dd  x\pf^2
\end{equation}
for some $V\in\mathcal C^1(\R^d,\R)$ and $V_1(x) = V(x) + \frac12|x|^2$. Then,
\[U_N(\bx) = \sum_{i=1}^N V_1(x_i) - \frac1{2N} \po \sum_{i=1}^N x_i\pf^2\,. \]
 Relying on the formula for the moment generating function of Gaussian distribution,
\[\mathbb E \po \exp\po z \cdot G\pf  \pf = \exp\po \frac\kappa2 |z|^2 \pf\,,\qquad z\in\R^d,\ G\sim \mathcal N\po 0,\kappa^{-1}I_d\pf ,  \]
applied with $z=\sum_{i=1}^N x_i$ and $\kappa=1/N$, we can write
\[\int_{\R^{dN}} \varphi(\bx)e^{-U_N(\bx)}\dd \bx = \mathbb E \po  \int_{\R^{dN}} \varphi(\bx) e^{ - \sum_{i=1}^N \co V_1(x_i) - G\cdot x_i\cf} \dd \bx  \pf  \]
for any $\varphi\in L^\infty(\R^{dN})$. This amounts to the representation
\begin{equation}
\label{eq:muNmixtureQuadra}
\mu_N = \int_{\R^d} \mu_\theta^{\otimes N} \nu_N(\theta)\dd \theta
\end{equation}
with 
\[\mu_\theta(x) = \frac{1}{Z(\theta)} e^{-V_1(x) + \theta\cdot x}\,,\qquad Z(\theta) = \int_{\R^d} e^{-V_1(x) + \theta\cdot x} \dd x\,, \]
and
\[\nu_N(\theta) \propto \exp \po - N \omega(\theta)\pf,\qquad \omega(\theta) = \frac12 |\theta|^2 - \ln Z(\theta)\,.\]
In other words, to sample $\mu_N$, we can sample $\theta \sim \nu_N$ and then generate i.i.d. samples $X^1,\dots,X^N$ with law $\mu_\theta$, which can be done with
\begin{equation}
\label{eq:particulesiidtheta}
\dd X_t^i = - \na V_1(X_t^i) \dd t + \theta \dd t + \sqrt{2}\dd B_t^i\,.
\end{equation}
Since these particles are independent, for a fixed $\theta$, the long-time convergence of these toward $\mu_\theta^{\otimes N}$ is independent from $N$ and given by the log-Sobolev constant of $\mu_\theta$. Assuming for instance that $V_1$ is the sum of a strongly convex and a bounded functions, it follows by standard arguments that this LSI constant can be bounded from below uniformly over $\theta$ (see~\cite{BakryGentilLedoux} or the proof of Proposition~\ref{prop:lambda_thetaPL} below for details).  In other words, sampling $\mu_{\theta}^{\otimes N}$ can be done in a time independent from $\theta$ and $N$, contrary to $\mu_N$.

All the sampling difficulty is now encoded in $\theta$,  the critical points of $\omega$ corresponding to those of $\mathcal F$. Indeed, on the one hand,
\[\na\omega(\theta) =  \theta - \int_{\R^d} x \mu_\theta(x) \dd x\,. \]
On the other hand, by classical considerations (see e.g.~\cite{tugaut2023steady}), for the energy~\eqref{eq:ECurieWeiss}, a density $\rho_*$ is a stationary solution of~\eqref{eq:gradientflowF} if and only if
\begin{equation}
\label{eq:self-consistent}
\rho_*(x) \propto \po -V_1(x) +  m_*\cdot x  \pf\,,\qquad m_* = \int_{\R} x \rho_*(x)\dd x\,, 
\end{equation}
namely $\rho_* = \mu_{m_*}$. Multiplying this equation by $x$ and integrating shows that $m_* = \int_{\R^d} x\mu_{m_*}(x)\dd x$, which means that $m_*$ is a critical point of $\omega$. Conversely, if $m_*$ is a critical point of $\omega$, it is straightforward to check that $\mu_{m_*}$ satisfies the self-consistency equation~\eqref{eq:self-consistent}, hence is a critical point of $\mathcal F$.

We can go a step further and in fact see that $\theta_*$ is a local minimiser of $\omega$ if and only if $\mu_{\theta_*}$ is a local minimiser of $\mathcal F$, cf. \cite[Section 3.1.2]{monmarche2025local}. In other words, in the situation where $\mathcal F$ has several local minimisers, this will be the case for $\omega$ and then trying to sample  $\nu_N$ with
\begin{equation}
    \label{eq:ThetaLangevinintro}
    \dd \Theta_t = -\na \omega(\Theta_t)\dd t + \sqrt{2/N}\dd B_t
\end{equation}
would be unpracticable  for large $N$ due to metastability \cite{lelievre2012two}.

As a conclusion of this discussion, we can say that the question of sampling the local minimisers of $\mathcal F$ boils down to sampling the Gibbs measure $\nu_N$  associated to the coarse-grained/macroscopic potential $\omega$, and the difficulty is that the temperature $1/N$ is vanishing as $N$ increases.
 
 Our approach is then simple: it consists in sampling $\theta$ according to $\nu_\beta \propto \exp \po - \beta \omega\pf$ with $\beta>0$ fixed independent from $N$. We postpone to Section~\ref{sec:CVenhancesampling} a discussion about similar ideas in different contexts.
 
The remaining problem is that $\na \omega$ cannot be computed in practice. Hence, we resort to a stochastic gradient approximation. Indeed, we have seen that sampling $\mu_\theta$ with i.i.d. particles~\eqref{eq:particulesiidtheta} is easy (in the sense that the time needed is independent from $N$ and $\theta$; due to $V_1$ possibly being non-convex, this can still be long in some cases), and then we can estimate
\begin{equation}
\label{loc:approx}
\na \omega(\theta) \simeq \theta - \frac1N\sum_{i=1}^N X_t^i
\end{equation}
In fact, we sample both the macroscopic parameter $\theta_t$ and the particles simultaneously, while adding a small speed parameter $\varepsilon>0$ to the dynamics of $\theta_t$. Indeed, if $\theta_t$ moves slowly enough, we expect the particles to reach equilibrium as if $\theta_t$ were constant, so that the approximation~\eqref{loc:approx} remains valid (an alternative would have been to have the particles influenced by an exponential moving time-average of $\Theta_t$, in the spirit of~\cite{Yulong}).

As a conclusion, in the case of~\eqref{eq:ECurieWeiss} with a quadratic interaction energy, the process that we consider in the present work is given by
\begin{align*}
\forall i\in\cco 1,N\ccf,\qquad \dd X_t^i &= - \na V_1(X_t^i) \dd t + \Theta_t \dd t + \sqrt{2}\dd B_t^i\\
\dd \Theta_t &= -\varepsilon \po \Theta_t - \frac1N\sum_{i=1}^N X_t^i\pf \dd t + \sqrt{2\varepsilon/\beta} \dd B_t\,.
\end{align*}

\subsection{More general case}\label{subsec:generalcase}

The previous discussion can be extended to more general situations. For instance, consider the energy
\begin{equation}
\label{eq:ECurieWeiss-extended}
\mathcal E(\rho) = \int_{\R^d} V(x) \rho(x)\dd x + \frac12 \int_{\R^d} W(x,y) \rho(x)\rho(y) \dd x \dd y 
\end{equation}
with $V\in\mathcal C^1(\R^d)$, $W\in\mathcal C^1(\R^{2d})$ such that $W(x,y)=W(y,x)$ for all $x,y\in\R^d$. Under mild conditions, we can write a Mercer decomposition of $W$ of the form
\begin{equation}
\label{eq:Mercer}
W(x,y) = W_0(x) + W_0(y) + \sum_{k\geqslant 0} m_k(x)m_k(y)  - \sum_{k\geqslant 0} n_k(x)n_k(y)\,,
\end{equation}
for some functions $W_0$, $(n_k,m_k)_{k\in\N}$. For instance, for $\alpha>0$,
\begin{equation}
\label{eq:Gaussianinteraction}
 - \alpha e^{-\frac{(x-y)^2}2} = -\sum_{k\geqslant 0} \alpha e^{-\frac{x^2}{2}} \frac{x^ky^k}{k!} e^{-\frac{y^2}{2}}\,. 
\end{equation}
From the representation~\eqref{eq:Mercer}, the energy can be written
\[\mathcal E(\rho) = \int_{\R^d} V_1(x)\rho(x) \dd x + \frac12\sum_{k\geqslant 0}  \po \int_{\R} m_k(x)\rho(x)\dd x\pf^2  - \frac12 \sum_{k\geqslant 0} \po \int_{\R} n_k(x)\rho(x)\dd x\pf^2\,,\]
with $V_1 = V + W_0$. The two first terms are convex as a function of $\rho$ (along flat interpolations $t\mapsto (1-t)\rho_0 + t \rho_1$), and thus they will not be the problematic ones. Denoting $\varphi(x) = (n_k(x))_{k\geqslant 0}$, we get an expression of the form
\[\mathcal E(\rho) = \mathcal E_c(\rho) - \frac12\|\rho(\varphi)\|_{\ell^2}^2\,,\]
with $\rho \mapsto \mathcal E_c(\rho)$ which is convex. The  dependency of the last part as a function of $\rho$ is quadratic since the energy~\eqref{eq:ECurieWeiss} only involves pairwise interactions. A natural extension to more general interactions is given by energies of the form 
\[\mathcal E(\rho) = \mathcal E_c(\rho) + R \po \rho(\varphi)\pf \,,\]
with $\varphi$ taking value in some Hilbert space $\mathbb H$ and $R:\mathbb H \rightarrow \R$. However, in this situation, a reasonable assumption is that the Hessian of $R$ is lower bounded, meaning that $\psi \mapsto R(\psi) + L \|\psi\|^2$ is convex for some large $L$, and in that case we can write
\[ \mathcal E_c(\rho) + R \po \rho(\varphi)\pf = \widetilde{\mathcal E}_c(\rho) - \frac12 \| \rho(\tilde \varphi)\|^2 \,,\]
with a convex $\widetilde{\mathcal E}_c$ and $\tilde \varphi(x)= \sqrt{L}\varphi$, and we are back to the previous form.

As a conclusion, the general form of energy that we wish to cover is
\begin{equation}
\label{eq:geneE}
\mathcal E(\rho) = \mathcal E_0(\rho) - \frac{1}{2}\|\rho(\varphi)\|^2
\end{equation}
for some function $\varphi:\R^d\rightarrow \mathbb H$ with $\mathbb H$ some Hilbert space, and $\mathcal E_0$ is some energy that we can think as convex or at least not too concave to cause difficulties, as we will state precisely later on (specifically, see~\eqref{eq:convexity-c}).

In that case, the associated mean-field potential is 
\[U_N(\bx) = N \mathcal E(\pi_{\bx}) = V_N(\bx) - \frac{1}{2N} \po \sum_{i=1}^N \varphi(x_i)\pf^2\]
with $V_N(\bx) = N \mathcal E_0(\pi_{\bx})$. Reasoning as in the quadratic interaction case (possibly with some subtleties if the dimension of $\mathbb H$ is infinite, see \cite{MonmarcheEquivalence}, but we won't consider this case afterwards), we get for the Gibbs measure $\mu_N \propto \exp(-U_N)$ the representation
\[\mu_N = \int_{\mathbb H} \mu_{\theta}^N \nu_N(\theta) \dd \theta\,,\]
with
\[\mu_\theta^N = \frac{1}{Z_N(\theta)} e^{-V_N(\bx) + \theta\cdot \sum_{i=1}^N \varphi(x_i) }\,,\qquad Z_N(\theta) = \ln \int_{\R^{dN}} e^{-V_N(\bx) + \theta\cdot \sum_{i=1}^N \varphi(x_i) } \dd \bx  \]
(which in general is not a tensor product if $\mathcal E_0$ is not linear) and
\[\nu_N \propto \exp \po - N \omega_N(\theta)\pf \]
with
\begin{equation}
    \label{eq:omegaN}
     \omega_N(\theta) = \frac{1}{2}|\theta|^2 - \frac{1}{N}\ln Z_N(\theta)\,.
\end{equation}
From
\begin{equation}
    \label{eq:naomegaN}
\na \omega_N(\theta) = \theta - \int_{\R^{dN}} \pi_{\bx}(\varphi) \mu_\theta^N(\bx)\dd \bx\,,
\end{equation}
proceeding as in the quadratic case, we end up with the coupled system
\begin{equation}
\label{eq:thetaXcouples}
\left\{
\begin{array}{rcl}
\forall i\in\cco 1,N\ccf,\qquad \dd X_t^i &=& - \na_{x_i} V_N(\bX_t) \dd t + \Theta_t \cdot \na \varphi(X_t^i) \dd t + \sqrt{2}\dd B_t^i\\
\dd \Theta_t &=& - \varepsilon\po \Theta_t - \frac1N\sum_{i=1}^N \varphi(X_t^i)\pf \dd t + \sqrt{2\varepsilon/\beta} \dd B_t\,.
\end{array}\right.
\end{equation}
The introduction of this process, which can be referred to as a two-scale sampler, is our main contribution. Its theoretical and empirical study is the main topic of the rest of this work.

In practice we only consider the finite dimensional case $\H = \R^m$ for some $m\geqslant 1$. From an infinite-dimensional situation, we have to consider an orthogonal decomposition $\varphi=(\varphi_1,\varphi_2) \in \mathbb H_1 \oplus \H_2$ with $\H_1$ of finite dimension and then the process is~\eqref{eq:thetaXcouples} but with $\varphi$ replaced by $\varphi_1$ and $V_N $ replaced by $N \mathcal E_1(\pi_{\bx})$ with $\mathcal E_1(\rho) = \mathcal E_0(\rho) - \frac12\|\rho(\varphi_2)\|^2$. In that case, $\mathcal E_1$ might not be convex but, provided $\|\varphi_2\|_\infty$ is small enough for instance, it will still have good properties for our purpose (see~\cite{M61} or Assumption~\ref{assum:PL} below).

\subsection{Relation to collective variable-based enhanced sampling methods}
\label{sec:CVenhancesampling}

Metastability is a central difficulty for sampling in molecular simulations~\cite{lelievre2010free}. On the other hand, in this field, the problem is typically multi-scale, with fast oscillations of the nuclei positions $x\in\R^d$ and slow motion of macroscopic configurations (described by much less degrees of freedom than $d$). The context is thus very similar to the one we are facing in our mean-field settings.  In this situation, over the last twenty years, a broad family of adaptive biasing methods have been developed for molecular simulations, based on collective variables. A collective variable is described by a function  $\xi:\R^d \rightarrow \R^m$ with $m\ll d$, often $m\leqslant 3$. Ideally, it should be chosen so that the value of $\xi(x)$ describes which macroscopic configuration the microscopic configuration $x$ belongs to (i.e. $\xi(x)$ is an order parameter). In other words, denoting by $U$ the energy of the system (so that we aim at sampling the Gibbs measure with density $\mu_\beta \propto \exp(-\beta U)$ with $\beta$ the inverse temperature), the value of $\xi(x)$ should ideally characterise the local minimiser of $U$ whose basin of attraction (for the gradient flow) contains $x$. Initially, the definition of suitable collective variables were based on expert knowledge. Over recent years, the question of their unsupervised learning  has been a very active research field, see e.g. \cite{belkacemi2022chasing} and references within.

Now, suppose that we are given a collective variable $\xi$. Assuming that $\xi(X)$ has a density $\nu_\beta$ when $X\sim \mu_\beta$, we write $A(\psi) = -\beta^{-1} \ln \nu_\beta(\psi)$. This function is called the free energy associated to $\xi$. If the collective variable is ``good" (whatever this precisely means), then the different modes of $\mu_\beta$ should give different modes of $\nu_\beta$. In other words, the multimodality of $\mu_\beta$ should transfer to $\nu_\beta$, and the local minimisers of $A$ should correspond to stable macroscopic configurations. Moreover, if the different modes of $\mu_\beta$ are separated in $\nu_\beta$, then there should not be any remaining multimodality in the conditional law of $X$ given $\xi(X) = \psi$, whatever $\psi\in\R^m$. In this situation, the main issue is to sample correctly $\xi(X)$ (which is difficult since, relying on a standard Langevin dynamics for $X$, transitions of $\xi(X)$ between modes are rare events due to metastability), while sampling the conditional distributions is easy. This situation is very similar to the one discussed in Section~\ref{sec:intro_twoscale} in our mean-field situation.

In this context, a variety of adaptive importance methods are based on the idea to target a measure $\tilde \mu_\beta$ such that:
\begin{enumerate}
\item The conditional law of $X$ given $\xi(X) = \psi$, for any $\psi \in \R^m$, is the same  under $\mu_\beta$ and $\tilde \mu_\beta$.
\item The marginal distribution of $\xi(X)$ when $X\sim \tilde \mu_\beta$ is $\nu_{\beta_1} \propto \exp(-\beta_1 A)$ for some $\beta_1< \beta$.
\end{enumerate}
At the end of the simulation, expectations with respect to $\mu_\beta$ are then computed by importance reweighting of samples distributed according to $\tilde \mu_\beta$. The motivation of the definition of $\tilde \mu_\beta$ is that increasing the temperature $\beta_1^{-1}$ reduces the transition time between modes. In some applications, $\xi(x)$  is in a compact torus (for instance when it is given by some interatomic angles), and then we may simply take $\beta_1 = 0$ (i.e. $\xi(X)$ is sampled uniformly). Moreover, this tempering is only done on a small-dimensional marginal distribution, instead of simply sampling $\mu_{\beta_1}$, which would be a reference measure very far from the target distribution $\mu_\beta$ due to the high dimension $d$, leading to a large variance of the importance sampling weights and a poor result.

Standard algorithms with wide-spread applications which target $\nu_\beta$ are for instance the adaptive biasing force (ABF) method, metadynamics or OPES, see the discussion and references in \cite[Section 1.2]{lelievre2026convergence}.

\medskip

As we see, the two-scale sampler introduced in the present work follows exactly the same objective as these methods. We aim at sampling $(\bX,\Theta) $ distributed according to the density $\mu_\theta^N(\bx) \nu_N(\theta)$, but it is multi-modal and the corresponding Langevin dynamics is very metastable. As discussed in Section~\ref{sec:intro_twoscale}, in good situations, the multi-modality is entirely encoded in the marginal distribution $\nu_N$ of the collective variable $\xi(\bX,\Theta):=\Theta$. Hence, we target a modified measure where the conditional law of $\bX$ given $\xi(\bX,\Theta)=\theta$ is unchanged (this is still $\mu_\theta^N$) while the collective variable $\Theta$ is sampled at a higher temperature ($\beta^{-1}$ instead of $N^{-1}$).

However, if we naively apply standard collective variable-based enhanced sampling methods to our situation, then the initial target marginal distribution $\nu_N$ gets steeper and steeper as $N\rightarrow \infty$, and thus the  adaptive drift required to bias it to $\nu_\beta$ with a fixed $\beta$ becomes larger and larger, leading to unstable numerical schemes.  The reason why this doesn't occur for the two-scale sampler~\eqref{eq:thetaXcouples} is mainly due to the stochastic gradient approximation~\eqref{loc:approx}, which explicitly rely on the mean-field asymptotic structure of the problem and on the time-scale separation between the macroscopic and microscopic evolutions.

\subsection{Some definitions and notations}\label{subsec:def-not}

We write $\mathcal P_p(\R^d)$ the set of probability measures with finite $p^{th}$ moment, and $\mathcal W_p$ the associated Wasserstein distance.

A linear functional derivative of  $\mathcal E:\mathcal P_2(\R^d) \rightarrow (-\infty,\infty]$ is  a measurable function $\frac{\delta\mathcal{E}}{\delta m}:\mathcal P_2(\R^d)\times\R^d \rightarrow \R$ such that, for all  $\mu_1,\mu_0 \in \mathcal P_2(\R^d)$ with $\mathcal{E}(\mu_0)+\mathcal{E}(\mu_1)<\infty$,
\begin{equation}
    \label{eq:defFunctionDerivative}
\mathcal E(\mu_1) - \mathcal E(\mu_0) = \int_0^1 \int_{\R^d} \frac{\delta  \mathcal E}{\delta m}(t\mu_1 +(1-t) \mu_0,x) (\mu_1-\mu_0)(\dd x)\dd t\,.
\end{equation}
 If it exists, it is unique up to an additive constant. If $x \mapsto \frac{\delta \mathcal E}{\delta m}(\mu,x)$ is $\mathcal C^1$ for all $\mu \in \mathcal P_2(\R^d)$, then we call
 \begin{equation}
 \label{def:intrinsic}
 D\mathcal E(\mu,x) := \na_x \frac{\delta\mathcal E}{\delta m}(\mu,x)
 \end{equation}
 the intrinsic derivative of $\mathcal E$. 
 
 For two probability measures $\nu,\mu$ on $\R^d$ with $\nu \ll \mu$, the relative entropy and Fisher information of $\nu$ with respect to $\mu$ are defined by
 \begin{equation}
 \label{eq:HI}
 \mathcal H(\nu|\mu) = \int_{\R^d} \ln \po \frac{\dd \nu}{\dd \mu}\pf \dd \nu\,,\qquad \mathcal I(\nu|\mu) = \int_{\R^d}\left|\na  \ln \po  \frac{\dd \nu}{\dd \mu}\pf \right|^2 \dd \nu\,,
 \end{equation}
 where $|\na \ln(\dd \nu/\dd \mu)|$ is a classical gradient norm if $\dd \nu/\dd \mu$ is differentiable and is always defined in general as an upper gradient (see \cite[Definition 1.2.4]{ambrosio2005gradient}). If $\nu$ has no density with respect to $\mu$, $\mathcal H(\nu|\mu) = \mathcal I(\nu|\mu) = +\infty$.
 
\section{Theoretical results}\label{sec:theory}

There are two approximations between the scheme~\eqref{eq:thetaXcouples} and the target macroscopic dynamics: the particle one (depending on $N$) and the finite-speed  one (depending on $\varepsilon$). To check the consistency of the method, we provide successively  basic convergence guarantees for both errors.

\subsection{Propagation of chaos}

At fixed $\varepsilon>0$, we expect propagation of chaos to hold conditionally to the macroscopic noise $B$ as $N\rightarrow \infty$, so that $(\pi_{\bX_t},\theta_t)$ should converge to
\begin{equation}
\label{eq:thetamucouples}
\left\{
\begin{array}{rcl}
\partial_t \nu_t &=&  \na\cdot \po \nu_t\co  D\mathcal E_0(\nu_t,\cdot) - \bar\Theta_t\cdot \na \varphi\cf \pf  + \Delta\nu_t  \\
\dd \bar\Theta_t &=& - \varepsilon\po \bar\Theta_t - \nu_t(\varphi)\pf  + \sqrt{2\varepsilon/\beta} \dd B_t\,,
\end{array}\right.
\end{equation}
assuming that $\pi_{\bX_0} \rightarrow \nu_0$, which for instance holds if $\bX_0 \sim \nu_0^{\otimes N}$ (which we assume in Theorem~\ref{thm:chaos} to have a simpler statement). We work under these regularity conditions:


\begin{assumption}\label{assu:PoC}
\
\begin{enumerate} 
\item The intrinsic derivative $D\mathcal E_0$ exists. There exists $L>0$ such that, first, $x\mapsto D\mathcal E_0 (\nu,x)$ is $L$-one-sided Lipschitz continuous uniformly in $\nu$, meaning that
\begin{equation}
\label{eq:onesidedLip}
\forall \nu\in\mathcal P_2(\R^d),\ \forall x,y\in \R^{d},\qquad (x-y) \cdot \po D\mathcal E_0(\nu,x) - D\mathcal E(\nu,y)\pf \leqslant L |x-y|^2\,,
\end{equation}
and, second, $\mu \mapsto D\mathcal E_0(\mu,x)$ is $L$-Lipschitz continuous uniformly in $x$, i.e.
\begin{equation}
\label{eq:Lipschitzwrtmu}
\forall \nu,\mu \in \mathcal P_2(\R^d),\ \forall x\in\R^d,\qquad |D\mathcal E_0(\nu,x) - D\mathcal E_0(\mu,x)|\leqslant L \mathcal W_2(\nu,\mu)\,.
\end{equation}
\item The parameter $\varphi\in \mathcal C^1(\R^d,\R^m)$ is  Lipschitz continuous, and so is $\na \varphi$.
\end{enumerate} 
\end{assumption}

Under these regularity conditions, well-posedness for~\eqref{eq:thetaXcouples} and~\eqref{eq:thetamucouples} follows from classical fixed-point arguments. Applying then a  standard synchronous coupling arguments, we obtain the following (conditional) mean-field limit:

\begin{theorem}\label{thm:chaos}
Under Assumption~\ref{assu:PoC}, fix a Brownian motion $B$ on $\R^m$, an initial distribution $\nu_0 \in \mathcal P_p(\R^d)$ for some $p>2$ and $\theta_0 \in \R^m$. Denote by $(\nu_t,\bar\Theta_t)_{t\geqslant 0}$ the corresponding solution of~\eqref{eq:thetamucouples} (with $\bar\Theta_0=\theta_0$).  For all $N\geqslant 1$, let $\bX_0 \sim \nu_0^{\otimes N}$ (independent from $B$), let $B^1,\dots,B^N$ be $N$ independent $d$-dimensional Brownian motions independent from $B$ and $\bX_0$, and denote by $(\bX_t,\Theta_t)_{t\geqslant 0}$ the corresponding solutions of~\eqref{eq:thetaXcouples} (with $\Theta_0=\theta_0$). Then, for all $T>0$ and $\delta>0$,
\[\sup_{t\in[0,T]} \mathbb P \po \mathcal W_2(\pi_{\bX_t}, \nu_t)  + |\Theta_t - \bar\Theta_t| \geqslant \delta \pf \underset{N\rightarrow 0}\longrightarrow 0\,.\]
\end{theorem}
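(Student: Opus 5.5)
We argue by synchronous coupling, conditionally on the macroscopic Brownian motion $B$. Fix $B$ and the solution $(\nu_t,\bar\Theta_t)$ of~\eqref{eq:thetamucouples}. Note that $\nu_t$ is random only through $B$ and is independent of $(\bX_0,B^1,\dots,B^N)$. We introduce the nonlinear particles $\bar X^i_t$, $i\in\cco 1,N\ccf$, solving
\[\dd \bar X_t^i = -D\mathcal E_0(\nu_t,\bar X_t^i)\dd t + \bar\Theta_t\cdot\na\varphi(\bar X^i_t)\dd t + \sqrt2 \dd B^i_t,\qquad \bar X_0^i = X_0^i.\]
Conditionally on $B$, these are i.i.d. with law $\nu_t$ at time $t$, by uniqueness for the linear Fokker--Planck equation with the given drift. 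We also use that $\na_{x_i}V_N(\bx) = D\mathcal E_0(\pi_{\bx},x_i)$. The error then splits as
\[\mathcal W_2(\pi_{\bX_t},\nu_t)\leqslant \Big(\frac1N\sum_i|X^i_t-\bar X^i_t|^2\Big)^{1/2} + \mathcal W_2(\pi_{\bar\bX_t},\nu_t).\]
The second term tends to $0$ in probability by the Fournier--Guillin empirical measure rate. Conditionally on $B$ this rate is applied to i.i.d. samples, and it needs a $p$-th moment bound $\sup_{t\leqslant T}\nu_t(|x|^p)$. That bound follows from an Itô estimate using~\eqref{eq:onesidedLip}, the linear growth of $\na\varphi$, and $\sup_{[0,T]}|\bar\Theta_t|<\infty$ almost surely.

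The main step is a Grönwall estimate for $D_t = \frac1N\sum_i |X^i_t-\bar X^i_t|^2 + |\Theta_t-\bar\Theta_t|^2$. The Brownian motions cancel, so $D$ is differentiable. For the particles we decompose
\[D\mathcal E_0(\pi_{\bX},X^i) - D\mathcal E_0(\nu,\bar X^i) = \big[D\mathcal E_0(\pi_{\bX},X^i)-D\mathcal E_0(\pi_{\bX},\bar X^i)\big] + \big[D\mathcal E_0(\pi_{\bX},\bar X^i)-D\mathcal E_0(\nu,\bar X^i)\big].\]
The first bracket is handled by the one-sided Lipschitz bound~\eqref{eq:onesidedLip}. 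The second is at most $L\,\mathcal W_2(\pi_{\bX},\nu)\leqslant L\big(\sqrt{D}+\mathcal W_2(\pi_{\bar\bX},\nu)\big)$ by~\eqref{eq:Lipschitzwrtmu}. For the $\Theta$-coupling term we write
\[\Theta\cdot\na\varphi(X^i)-\bar\Theta\cdot\na\varphi(\bar X^i) = (\Theta-\bar\Theta)\cdot\na\varphi(X^i)+\bar\Theta\cdot\big(\na\varphi(X^i)-\na\varphi(\bar X^i)\big).\]
The second piece is bounded by $|\bar\Theta|\,\|\na^2\varphi\|_\infty |X^i-\bar X^i|$. The first piece is the delicate one, because $\na\varphi$ is only Lipschitz and not bounded. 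Since $\varphi$ is Lipschitz, however, $\na\varphi$ is in fact bounded by $\mathrm{Lip}(\varphi)$, so this term is at most $C|\Theta-\bar\Theta||X^i-\bar X^i|$. For $\Theta$, the drift difference is $-\varepsilon(\Theta-\bar\Theta)+\varepsilon(\pi_{\bX}(\varphi)-\nu(\varphi))$. The last term is bounded by $\varepsilon\,\mathrm{Lip}(\varphi)\big(\sqrt D + \mathcal W_1(\pi_{\bar\bX},\nu)\big)$. Combining everything gives
\[\frac{\dd}{\dd t}D_t\leqslant C\big(1+\sup_{s\leqslant T}|\bar\Theta_s|\big)D_t + C\,\mathcal W_2(\pi_{\bar\bX_t},\nu_t)^2,\]
hence $D_t\leqslant e^{C_B T}\int_0^T \mathcal W_2(\pi_{\bar\bX_s},\nu_s)^2\dd s$, where $C_B$ is an almost surely finite random constant depending only on $B$.

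To conclude we take conditional expectations given $B$. Since the particles are conditionally i.i.d., $\mathbb E[\mathcal W_2(\pi_{\bar\bX_s},\nu_s)^2\mid B]\leqslant \epsilon_N(M_B)$, where $\epsilon_N\to0$ depends only on the conditional $p$-th moment bound $M_B$. Therefore $\mathbb E[D_t\mid B]\to0$ almost surely, uniformly in $t\in[0,T]$. A conditional Markov inequality gives $\sup_t\mathbb P(D_t+\mathcal W_2(\pi_{\bar\bX_t},\nu_t)^2\geqslant\delta^2/4\mid B)\to0$ almost surely. Dominated convergence over $B$ then yields the statement.

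The main obstacle is that the constants $C_B$ and $M_B$ are random and not integrable a priori. This is why we work with convergence in probability, conditioning on $B$ instead of taking full expectations. Alternatively, one can localize on the event $\{\sup_{[0,T]}|\bar\Theta|\leqslant K\}$, whose probability tends to $1$ as $K\to\infty$.
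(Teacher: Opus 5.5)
Your proposal is correct and follows essentially the paper's proof: a synchronous coupling with nonlinear particles that are conditionally i.i.d. with law $\nu_t$ given the macroscopic noise, the Fournier--Guillin rate under an almost surely finite $p$-th moment bound, and a Grönwall estimate whose random constant depends on $\sup_{[0,T]}|\bar\Theta|$. The differences are cosmetic. You freeze the measure at $\pi_{\bX}$ before applying~\eqref{eq:onesidedLip}, where the paper uses the one-sided Lipschitz property of $\na V_N$. You run Grönwall pathwise before conditioning, and you conclude by dominated convergence instead of truncating on $\{S_t\leqslant K\}$. Note also that the $p$-th moment bound needs~\eqref{eq:Lipschitzwrtmu} as well, to control $D\mathcal E_0(\nu_t,0)$.
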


If we consider a convergence conditionally to the sigma algebra generated by $B$, then a more quantitative result is stated in Remark~\ref{rem:a.s.CV} below.

\subsection{Averaging}

In this section, with respect to~\eqref{eq:thetamucouples}, we accelerate time by a factor $1/\varepsilon$. For a fixed Brownian motion $B$ and an initial condition $(\nu_0,\Theta_0)$, we consider for all $\varepsilon>0$ the process
\begin{equation}
\label{eq:thetamucouples-epsilon}
\left\{
\begin{array}{rcl}
\partial_t \nu_t^\varepsilon &=&  \frac1\varepsilon \na\cdot \po \nu_t^\varepsilon\co  D\mathcal E_0(\nu_t^\varepsilon,\cdot) -  \Theta_t^\varepsilon \cdot \na \varphi\cf \pf  + \frac1\varepsilon \Delta\nu_t^\varepsilon  \\
\dd  \Theta_t^\varepsilon &=& -  \Theta_t^\varepsilon + \nu_t^\varepsilon(\varphi)   + \sqrt{2/\beta} \dd B_t\,.
\end{array}\right.
\end{equation}
We are interested in the limit of this process as $\varepsilon $ vanishes. We expect to observe an averaging phenomenon in the nice situation where, for any $\theta\in\R^m$, the equation 
\begin{equation}
\label{eq:nu_theta_fixed}
\partial_t \rho_t =  \na\cdot \po \rho_t\co  D\mathcal E_0(\rho_t,\cdot) -  \theta \cdot \na \varphi\cf \pf  + \Delta\rho_t 
\end{equation}
admits a unique stationary solution $\mu_\theta$, globally attractive. In this case, as $\varepsilon\rightarrow 0$, $\Theta^\varepsilon$ should converge to the solution of
\begin{equation}
\label{eq:Theta0}
\dd  \Theta_t^0 =  -  \Theta_t^0 + \mu_{\Theta_t^0}(\varphi)   + \sqrt{2/\beta} \dd B_t\,,
\end{equation}
with $\Theta_0^0 = \Theta_0$, while $\nu_t^\varepsilon$ should converge to $\mu_{\Theta_t^0}$.  This is the content of Theorem~\ref{thm:averaging}. Before stating it, let us motivate its assumptions and explain why~\eqref{eq:Theta0} is a Langevin process associated to some coarse-grained energy $\omega$ (as in~\eqref{eq:ThetaLangevinintro}, but now with temperature $\beta^{-1}$ instead of $N^{-1}$), without trying to state precise formal results in general settings (we refer to \cite{liu2020large,GuillinWuZhang,Pavliotis,Monmarchemetastable} for further details, and particularly to~\cite[Section 6.1]{MonmarcheEquivalence}).

Since~\eqref{eq:nu_theta_fixed} is the Wasserstein gradient flow of the free energy
\[\mathcal F_\theta(\nu) = \mathcal E_0(\nu) - \theta\cdot \nu(\varphi) + \mathcal H(\nu)\,,\]
 it admits a unique globally attractive stationary solution  if $\mathcal F_\theta$ admits a unique global minimiser and satisfies a so-called Polak-Łojasiewicz (PL) inequality, namely if there exists $\lambda_\theta>0$ such that
\begin{equation}
\label{eq:PLFtheta}
\forall \nu \in \mathcal P_2(\R^d), \qquad \tF_{\theta}(\nu) \leqslant \frac{1}{2\lambda_\theta} \mathcal I_{\theta}(\nu)\,,
\end{equation}
with 
\[\tF_\theta(\nu) = \mathcal F_\theta(\nu) - \inf \mathcal F_\theta \]
and the free energy dissipation
\[\mathcal I_{\theta}(\nu) := \int_{\R^d} \left|\na \ln \nu+ D\mathcal E_0(\nu)-\theta \cdot \na \varphi \right|^2\nu \]
(equal to $+\infty$ if $\nu$ doesn't have a Lebesgue density).  
From Laplace-Varadhan lemma, as $N\rightarrow \infty$, the macroscopic potential $\omega_N$ in~\eqref{eq:omegaN} converges to
\[\omega(\theta) = \frac12|\theta|^2 - \inf_{\nu \in \mathcal P_2(\R^d)} \mathcal F_\theta \,, \]
see e.g. \cite[(3.30)]{liu2020large}. Moreover, assuming suitable conditions, under the Gibbs measure $\mu_\theta^N$, $\pi_{\bx}$ almost surely converges to $\mu_\theta$. As a consequence,   letting $N\rightarrow\infty$ in~\eqref{eq:naomegaN} gives
\begin{equation}\label{loc:zdfdsgfhf}
\na \omega(\theta) = \theta-\mu_{\theta}(\varphi)\,,    
\end{equation}
meaning that~\eqref{eq:Theta0} is indeed a Langevin process with potential $\omega$ and temperature $\beta^{-1}$.

As in Section~\eqref{sec:intro_twoscale}, the critical points of $\omega$ (i.e.  the fixed points of $\theta \mapsto \mu_\theta(\varphi)$) are in one-to-one correspondence with the stationary solutions of~\eqref{eq:gradientflowF} (by $\theta \leftrightarrow \mu_\theta $), with local minimisers of $\omega$ corresponding to local minimisers of $\mathcal F$. In particular, $\mu_{\Theta_t^0}$ does sample efficiently these local minimisers (since the Gibbs density proportional to $e^{-\beta \omega}$ concentrates on the local minimisers of $\omega$).

By contrast, when $\mathcal F_\theta$ has several critical points, then most of the nice properties listed above fail, see e.g. \cite{Monmarchemetastable}. The limit behavior of~\eqref{eq:thetamucouples-epsilon} is then more intricate and we won't try to study this bad situation, as the algorithm is precisely not designed to work then (all the metastability should be incorporated in the macroscopic dynamics of $\Theta_t$). 

 As a conclusion of this discussion, we work under the following conditions, which, as stated in Proposition~\ref{prop:lambda_thetaPL} ensures that we are in the good situation discussed above.

 \begin{assumption}\label{assum:PL}\
 
 \begin{enumerate}
 \item The energy $\mathcal E_0$ is semi-lower continuous and $D\mathcal E_0$ exists.
 \item There exists $\eta \in[0,1/4)$ such that  for all $\nu,\nu'\in\mathcal P_2(\R^d)$ and $t\in[0,1]$
\begin{equation}
\label{eq:convexity-c}
\mathcal E_0(t\nu + (1-t) \nu') \leqslant t \mathcal E_0(\nu) +(1-t)\mathcal E_0(\nu') + t(1-t) \eta \|\nu-\nu'\|_{TV}^2 \,.
\end{equation}
\item There exists $c_0,C_0 >0$ such that for any $\nu\in\mathcal P_2(\R^d)$,
\begin{equation}
\label{eq:lowerboundE0m2}
\mathcal E_0(\nu) \geqslant c_0 \int_{\R^d} |x|^2 \nu(\dd x) - C_0\,.
\end{equation}
\item There exist $\kappa,M,L>0$ such that for all $\nu\in \mathcal P_2(\R^d)$, $\frac{\delta \mathcal E_0}{\delta m}(\nu,\cdot)$ is the sum of three functions: a $\kappa$-strongly convex one, a $M$-bounded one and a $L$-Lipschitz one. 
\item The parameter can be decomposed as $\varphi=(\varphi_{\ell},\varphi_b) \in \R^{m_\ell}\times \R^{m_b}$ (possibly with $m_\ell=0$ or $m_b=0$)  with $\varphi_\ell$  linear and $\varphi_b$  bounded.
 \end{enumerate} 
 \end{assumption}

\begin{proposition}\label{prop:lambda_thetaPL}
Under Assumption~\ref{assum:PL}, there exists $\lambda>0$ depending only on $\kappa,L,M,\eta$ such that for all $\theta=(\theta_\ell,\theta_b)\in \R^{m_\ell}\times\R^{m_b}$, $\mathcal F_\theta$ admits a unique global minimiser $\mu_\theta\in \mathcal P_2(\R^d)$ and satisfies the PL inequality~\eqref{eq:PLFtheta} with
\begin{equation}
\label{eq:lambdathetapropr2}
\lambda_\theta \geqslant \lambda e^{-|\theta_b|\|\varphi_b\|_\infty}\,. 
\end{equation}
\end{proposition}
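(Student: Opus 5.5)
We prove the statement in three steps: existence and uniqueness of $\mu_\theta$, a uniform LSI for the Gibbs measures built from the free energy, and then the PL inequality itself.

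\emph{Step 1: existence and uniqueness of the minimiser.} Fix $\theta$. The linear part of $\varphi$ only adds a linear potential $-\theta_\ell\cdot\varphi_\ell(x)$, which is absorbed by~\eqref{eq:lowerboundE0m2}. The bounded part contributes at least $-|\theta_b|\|\varphi_b\|_\infty$. Hence $\mathcal F_\theta$ is bounded below and coercive in the second moment. Together with the lower semicontinuity of $\mathcal E_0$ and $\mathcal H$, this gives existence of a minimiser by the direct method: sublevel sets are tight, and we use weak lower semicontinuity.

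For uniqueness, we use~\eqref{eq:convexity-c} together with the strong convexity of the entropy along flat interpolations. By Pinsker-type arguments,
\[\mathcal H(t\nu+(1-t)\nu')\leqslant t\mathcal H(\nu)+(1-t)\mathcal H(\nu')-\tfrac{t(1-t)}{2}\cdot\tfrac12\|\nu-\nu'\|_{TV}^2\]
(the exact constant is to be checked; the threshold $\eta<1/4$ is presumably chosen exactly to beat it). So $\mathcal F_\theta$ is strictly convex along flat interpolations, and the minimiser $\mu_\theta$ is unique. It satisfies the self-consistency equation $\mu_\theta\propto \exp(-\frac{\delta\mathcal E_0}{\delta m}(\mu_\theta,\cdot)+\theta\cdot\varphi)$.

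\emph{Step 2: uniform LSI for the proximal Gibbs measures.} For any $\nu$, define the Gibbs measure
\[\Gamma_\theta(\nu)\propto \exp\po -\tfrac{\delta\mathcal E_0}{\delta m}(\nu,\cdot)+\theta\cdot\varphi\pf.\]
Its potential is a $\kappa$-strongly convex function (the linear term $\theta_\ell\cdot\varphi_\ell$ preserves this) plus an $L$-Lipschitz one plus a bounded one, bounded by $M+|\theta_b|\|\varphi_b\|_\infty$. We combine the Bakry--Émery criterion, the Lipschitz perturbation lemma (Aida--Shigekawa/Miclo-type, giving a constant depending on $\kappa,L$), and Holley--Stroock. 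This yields an LSI for $\Gamma_\theta(\nu)$ with constant at least $\lambda_0 e^{-2M-2|\theta_b|\|\varphi_b\|_\infty}$, uniformly in $\nu$. This step gives the exponential dependence on $|\theta_b|$. The Holley--Stroock oscillation factor is $e^{-\mathrm{osc}}$ with oscillation $2|\theta_b|\|\varphi_b\|_\infty$; to obtain exactly the exponent $|\theta_b|\|\varphi_b\|_\infty$, we center $\theta_b\cdot\varphi_b$ so that its oscillation is bounded accordingly, or accept a factor $2$ in the exponent absorbed by redefining things.

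\emph{Step 3: from uniform LSI to PL (main obstacle).} We follow the entropy sandwich of Chizat/Nitanda--Wu--Suzuki, adapted to the non-convex case as in~\cite{M61}. First, using~\eqref{eq:convexity-c} and the defining property~\eqref{eq:defFunctionDerivative} of $\frac{\delta\mathcal E_0}{\delta m}$, we show the upper bound
\[\tF_\theta(\nu)\leqslant \mathcal H(\nu|\Gamma_\theta(\nu)) + \eta\|\nu-\mu_\theta\|_{TV}^2.\]
Second, we show the lower bound
\[\tF_\theta(\nu)\geqslant c(\eta)\,\mathcal H(\nu|\mu_\theta)\]
with $c(\eta)>0$ when $\eta<1/4$, obtained from flat convexity and Pinsker. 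Combining these two bounds with Pinsker's inequality, the TV term is absorbed into $\tF_\theta$, and we get $\tF_\theta(\nu)\leqslant C(\eta)\mathcal H(\nu|\Gamma_\theta(\nu))$. Finally, we observe that $\mathcal I(\nu|\Gamma_\theta(\nu))=\mathcal I_\theta(\nu)$ and apply the LSI from Step 2. This yields~\eqref{eq:PLFtheta} with $\lambda_\theta\geqslant \lambda e^{-|\theta_b|\|\varphi_b\|_\infty}$.

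The delicate point is the precise bookkeeping of the constants in the two entropy inequalities so that $\eta<1/4$ suffices. If the direct absorption fails, we would instead invoke the non-convex uniform LSI results of~\cite{M61} for energies satisfying~\eqref{eq:convexity-c}, applied with $\mathcal E_0-\theta\cdot\nu(\varphi)$, which still satisfies~\eqref{eq:convexity-c} since the added term is linear.
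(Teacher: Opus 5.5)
Your proposal follows the paper's proof. The steps are the same: existence by the direct method from \eqref{eq:lowerboundE0m2}; the two-sided bounds $\tF_\theta(\nu)\leqslant \mathcal H(\nu|\Gamma_\theta(\nu))+\eta\|\nu-\mu_\theta\|_{TV}^2$ and $\mathcal H(\nu|\mu_\theta)\leqslant \tF_\theta(\nu)+\eta\|\nu-\mu_\theta\|_{TV}^2$; Pinsker absorption; Bakry--\'Emery plus Lipschitz and bounded perturbation for $\Gamma_\theta(\nu)$; and $\mathcal I(\nu|\Gamma_\theta(\nu))=\mathcal I_\theta(\nu)$. The paper imports the two bounds from \cite[Lemma 3]{monmarche2025local}, whereas you would derive them from \eqref{eq:convexity-c} and the Euler--Lagrange equation $\mu_\theta=\Gamma_\theta(\mu_\theta)$.

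You get uniqueness from strict flat convexity of $\mathcal F_\theta$ rather than from the bounds at a critical point, and that is valid. The entropy's flat modulus is actually $\frac{t(1-t)}{2}\|\nu-\nu'\|_{TV}^2$: write the convexity defect as $t\mathcal H(\nu|m)+(1-t)\mathcal H(\nu'|m)$ with $m=t\nu+(1-t)\nu'$ and apply Pinsker. So uniqueness only needs $\eta<1/2$; the threshold $1/4$ is there for the absorption, not to beat the entropy.

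For the bookkeeping you leave open: the second bound plus Pinsker gives $c(\eta)=1-2\eta$. Absorbing $\eta\|\nu-\mu_\theta\|_{TV}^2\leqslant\frac{2\eta}{c(\eta)}\tF_\theta(\nu)$ requires $2\eta<c(\eta)$, not merely $c(\eta)>0$, which is exactly $\eta<1/4$. The resulting constant $\frac{1-2\eta}{1-4\eta}$ coincides with the paper's $1+\frac{2\eta}{1-4\eta}$.

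Your proposed repair of the exponent in Step 2 is wrong. Holley--Stroock costs a factor $e^{-\mathrm{osc}(\theta_b\cdot\varphi_b)}$, and oscillation is invariant under adding constants, so centering $\theta_b\cdot\varphi_b$ changes nothing. For instance, a scalar $\varphi_b$ with range $[-1,1]$ is already centred and has $\mathrm{osc}(\theta_b\varphi_b)=2|\theta_b|\|\varphi_b\|_\infty$. Nor can a $\theta$-dependent factor $e^{-|\theta_b|\|\varphi_b\|_\infty}$ be absorbed into a $\lambda$ depending only on $\kappa,L,M,\eta$. What your argument actually proves is $\lambda_\theta\geqslant\lambda e^{-2|\theta_b|\|\varphi_b\|_\infty}$.

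The paper's proof reaches the exponent $|\theta_b|\|\varphi_b\|_\infty$ by applying the same bounded-perturbation result to a function bounded by $M+|\theta_b|\|\varphi_b\|_\infty$. With $\|\cdot\|_\infty$ the sup norm, that also gives the factor $2$. So this looseness is shared with the paper rather than specific to your route. It is harmless downstream, where only positivity of $\inf_\theta\lambda_{g(\theta)}$ over bounded $\theta_b$ is used in \eqref{eq:lambdag}.
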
 

%
%
%
%
%

The main result of this section is then the following:

\begin{theorem}
\label{thm:averaging}
Under Assumption~\ref{assum:PL}, assume furthermore that $|\Theta_0|^2$ and $\mathcal F_{\Theta_0}(\nu_0)$ have finite expectation and that $\varphi$ is Lipschitz continuous. Then,  for any $T>t_0>0$ and $\delta>0$,
\begin{equation}
\label{eq:thmaveraging}
\sup_{t\in[0,T]} \mathbb P \po |\Theta_t^\varepsilon- \Theta_t^0|  \geqslant \delta   \pf + \sup_{t\in[t_0,T]} \mathbb P \po  \mathcal W_2(\nu_t^\varepsilon,\mu_{\Theta_t^0}) \geqslant \delta  \pf  \underset{\varepsilon\rightarrow 0}\longrightarrow 0\,.
\end{equation}
\end{theorem}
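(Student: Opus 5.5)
The strategy is to combine an entropy-dissipation estimate for the fast variable $\nu^\varepsilon_t$, based on the PL inequality of Proposition~\ref{prop:lambda_thetaPL}, with a Gronwall-type stability estimate for the slow variable $\Theta^\varepsilon_t$ driven by the common Brownian motion $B$. Since both $\Theta^\varepsilon$ and $\Theta^0$ are driven by the same $B$ with the same initial condition, the noise cancels in the difference:
\[\Theta_t^\varepsilon-\Theta_t^0 = \int_0^t \po -(\Theta_s^\varepsilon-\Theta_s^0) + \nu_s^\varepsilon(\varphi)-\mu_{\Theta_s^0}(\varphi)\pf \dd s\,.\]
I will split $\nu_s^\varepsilon(\varphi)-\mu_{\Theta_s^0}(\varphi) = [\nu_s^\varepsilon(\varphi)-\mu_{\Theta_s^\varepsilon}(\varphi)] + [\mu_{\Theta_s^\varepsilon}(\varphi)-\mu_{\Theta_s^0}(\varphi)]$.

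\textbf{Regularity of $\theta\mapsto\mu_\theta$.} The second term needs $\theta\mapsto \mu_\theta(\varphi)$ to be locally Lipschitz, i.e.\ $\na\omega$ locally Lipschitz. The approach is to compare $\mathcal F_\theta$ and $\mathcal F_{\theta'}$. Since $\mathcal F_{\theta'}(\nu)=\mathcal F_\theta(\nu)-(\theta'-\theta)\cdot\nu(\varphi)$, the quantities $\tF_{\theta'}(\mu_\theta)$ and $\tF_\theta(\mu_{\theta'})$ are controlled. Taking $\nu=\mu_{\theta}$ in the PL inequality for $\theta'$ gives $\mathcal I_{\theta'}(\mu_\theta) = |\theta-\theta'|^2\int|\na\varphi|^2\dd\mu_\theta$, which is bounded by $C|\theta-\theta'|^2$ because $\varphi$ is Lipschitz. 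Hence $\tF_{\theta'}(\mu_\theta)\leqslant C|\theta-\theta'|^2/\lambda_{\theta'}$. A Talagrand-type inequality $\mathcal W_2^2(\nu,\mu_{\theta'})\leqslant \frac{2}{\lambda_{\theta'}}\tF_{\theta'}(\nu)$, which I would derive from PL in the standard Otto--Villani way via the gradient flow~\eqref{eq:nu_theta_fixed}, then yields $\mathcal W_2(\mu_\theta,\mu_{\theta'})\leqslant C_R|\theta-\theta'|$ for $|\theta|,|\theta'|\leqslant R$, using~\eqref{eq:lambdathetapropr2}. The same inequality gives $|\nu(\varphi)-\mu_\theta(\varphi)|\leqslant \|\na\varphi\|_\infty\mathcal W_2(\nu,\mu_\theta)$. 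This also settles the well-posedness of~\eqref{eq:Theta0}.

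\textbf{Fast relaxation.} Set $h_t=\tF_{\Theta_t^\varepsilon}(\nu_t^\varepsilon)$ and compute $\dd h_t$ by Itô's formula. Since $\theta\mapsto\mathcal F_\theta(\nu)$ is affine and $\theta\mapsto\inf\mathcal F_\theta=\frac12|\theta|^2-\omega(\theta)$ is smooth, with gradient $-\mu_\theta(\varphi)$,
\[\dd h_t = -\tfrac1\varepsilon \mathcal I_{\Theta^\varepsilon_t}(\nu_t^\varepsilon)\dd t - \po\nu_t^\varepsilon(\varphi)-\mu_{\Theta_t^\varepsilon}(\varphi)\pf\cdot \dd\Theta_t^\varepsilon + \text{(Itô correction from }\na^2\omega)\dd t\,.\]
By PL, $-\frac1\varepsilon\mathcal I\leqslant -\frac{2\lambda_{\Theta}}{\varepsilon}h_t$. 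The cross term is $O(\sqrt{h_t/\lambda_\Theta})(|\Theta|+|\nu(\varphi)|+\text{noise})$. To keep constants bounded, I localize with the stopping time $\tau_R=\inf\{t:|\Theta_t^\varepsilon|\vee|\Theta^0_t|\geqslant R\}$; on $[0,\tau_R]$ we have $\lambda_\Theta\geqslant \lambda e^{-R\|\varphi_b\|_\infty}$. The linear part $\varphi_\ell$ of $\nu(\varphi)$ is controlled by second moments, which are bounded through~\eqref{eq:lowerboundE0m2} and $h_t$. Gronwall then gives
\[\mathbb E[h_{t\wedge\tau_R}]\leqslant e^{-c_Rt/\varepsilon}\mathbb E[h_0] + C_R\varepsilon\,.\]
Since $h_0<\infty$ in expectation by assumption, this is $O(\varepsilon)$ for $t\geqslant t_0$, and bounded for all $t$. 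Integrated over time, $\mathbb E\int_0^{T\wedge\tau_R}\sqrt{h_s}\,\dd s\to0$ even without $t_0$, which controls the first split term uniformly on $[0,T]$.

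\textbf{Conclusion.} Gronwall on $|\Theta^\varepsilon-\Theta^0|$ up to $\tau_R$ gives $\sup_{[0,T]}\mathbb E[|\Theta^\varepsilon_{t\wedge\tau_R}-\Theta^0_{t\wedge\tau_R}|]\to0$. Moment bounds for $\Theta^0$ (a linear drift plus the bounded-growth term $\mu_\theta(\varphi)$) and for $\Theta^\varepsilon$ make $\mathbb P(\tau_R\leqslant T)$ small uniformly in $\varepsilon$ as $R\to\infty$. For $t\in[t_0,T]$ I bound
\[\mathcal W_2(\nu_t^\varepsilon,\mu_{\Theta^0_t})\leqslant \sqrt{2h_t/\lambda_\Theta}+C_R|\Theta^\varepsilon_t-\Theta^0_t|\,,\]
and conclude by Markov's inequality. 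The main obstacle is the rigorous Itô computation of $h_t$ for the coupled infinite-dimensional system: its differentiability in $\theta$, the Itô correction, and whether $\inf\mathcal F_\theta$ is $\mathcal C^2$. If direct differentiability is problematic, I would instead freeze $\Theta$ on short time intervals of length $\gg\varepsilon$ and use the Lipschitz estimate on $\theta\mapsto\mu_\theta$ to patch the intervals together (a Khasminskii-type averaging argument).
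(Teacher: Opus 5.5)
Your proof is correct in outline, but it differs from the paper's in two structural choices.

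(i) For the fast variable you apply It\^o's formula directly to $h_t=\tF_{\Theta_t^\varepsilon}(\nu_t^\varepsilon)$. This forces you to differentiate the value function $F_*(\theta):=\inf\mathcal F_\theta$. The paper never does this. In Proposition~\ref{prop:ftildeepsilon} it freezes the parameter at $g(\Psi_t)$, lets the fast flow dissipate $\tF_{g(\Psi_t)}$ over a window of length $t_\varepsilon=\frac{2\varepsilon}{\lambda}\ln\varepsilon^{-1}$, and then changes the parameter using Lemma~\ref{lem:Ftildepsitheta} and the H\"older bound of Lemma~\ref{lem:Rs}. That is essentially your Khasminskii fallback, and it costs the factor $\ln\varepsilon^{-1}$ in~\eqref{eq:propaveraging}. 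Your direct route gives $O(\varepsilon)$ after the initial layer.

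The obstacle you flag is already resolved by your first step. $F_*$ is concave, being an infimum of affine functions of $\theta$. The identity $\mathcal F_{\theta'}(\mu_\theta)=F_*(\theta)-(\theta'-\theta)\cdot\mu_\theta(\varphi)$, together with the same identity with $\theta,\theta'$ exchanged and your Lipschitz bound on $\theta\mapsto\mu_\theta(\varphi)$, gives $\na F_*(\theta)=-\mu_\theta(\varphi)$. This gradient is locally Lipschitz, so $F_*\in\mathcal C^{1,1}_{loc}$; it is not smooth. Mollifying $F_*$ yields the one-sided It\^o inequality you need, with the correction $-\beta^{-1}\Delta F_*\geqslant 0$ bounded on bounded sets. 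The identity $\inf\mathcal F_\theta=\frac12|\theta|^2-\omega(\theta)$ you wrote has the wrong sign: it should be $\omega(\theta)-\frac12|\theta|^2$, which is consistent with $\na\omega=\theta-\mu_\theta(\varphi)$. Your argument only uses the gradient, which is correct.

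(ii) You localize with a stopping time on the original processes. The paper instead builds the auxiliary system~\eqref{eq:thetamucouples-epsilon-g}, with the cut-off $g$ acting only on $\theta_b$, so its PL constant is uniform for all times. All Gr\"onwall arguments then run unstopped and give the quantitative Proposition~\ref{prop:averaging}, which is already the final result when $m_b=0$. Sending $R\to\infty$ then only needs Lemma~\ref{lem:exit}, which is trivial because $\varphi_b$ is bounded.

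Your localization needs two repairs.

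\emph{The stopped Gr\"onwall bound.} The bound you state for $\mathbb E[h_{t\wedge\tau_R}]$ is false, since the stopped process stays at $h_{\tau_R}$ after $\tau_R$ and does not decay. Applying It\^o to $e^{c_Rt/\varepsilon}h_t$ gives instead $\mathbb E[h_t\mathbf 1_{t<\tau_R}]\leqslant e^{-c_Rt/\varepsilon}\mathbb E[h_0]+C_R\varepsilon$. This is what your later steps actually use, both in the time integral and in the final Markov bound.

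\emph{The tail of $\tau_R$.} Your $\tau_R$ also involves $\Theta_\ell$. So the claim that $\mathbb P(\tau_R\leqslant T)$ is small uniformly in $\varepsilon$ requires $\varepsilon$-uniform control of $\nu_t^\varepsilon(\varphi_\ell)$, i.e.\ of second moments of the fast variable. You do not supply this, and your bounds on $h$ only hold before $\tau_R$ with $R$-dependent constants. The control follows from It\^o applied to $\mathcal F_{\Theta_t^\varepsilon}(\nu_t^\varepsilon)$ itself (not $\tF$), discarding the dissipation. The remaining drift is $\nu_t^\varepsilon(\varphi)\cdot\Theta_t^\varepsilon-|\nu_t^\varepsilon(\varphi)|^2\leqslant\frac14|\Theta_t^\varepsilon|^2$. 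Together with the coercivity bound~\eqref{loc:sfgkdflsdgz} and the equation for $|\Theta_t^\varepsilon|^2$, Gr\"onwall then closes without any PL input.
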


In the case where $\varphi$ is linear (i.e. $m_b=0$), we get a more quantitative result, cf. Remark~\ref{rem:2}. Moreover, in the case $m_b\neq 0$, this stronger result still holds for a truncated version of the process, given by~\eqref{eq:thetamucouples-epsilon-g}, which can possibly be of interest in practice in order to sample the equilibria $\mu_\theta$ with $\theta$ constrained in a ball.

\subsection{Example}\label{sec:example}

Consider an energy $\mathcal E$ with pairwise interaction of the form~\eqref{eq:ECurieWeiss-extended}, where $W$ admits a decomposition~\eqref{eq:Mercer}. For some $m \in\N$, set
\[\varphi(x) = \po n_k(x)\pf_{k\in\cco 0,m\ccf}\,,\]
and
\begin{equation}
\label{loc:examplesfdfed}
\mathcal E_0(\rho) =   \int_{\R^d} V_1(x)\rho(x) \dd x + \frac12\sum_{k\geqslant 0}  \po \int_{\R} m_k(x)\rho(x)\dd x\pf^2  - \frac12 \sum_{k>m} \po \int_{\R} n_k(x)\rho(x)\dd x\pf^2\,.
\end{equation}
Then the energy $\mathcal E$ is decomposed as~\eqref{eq:geneE}. In fact we can write
\begin{equation}
\label{loc:exampleqsdsf}
\mathcal E_0(\rho) = \int_{\R^d} V(x) \rho(\dd x) + \frac12 \int_{\R^d} W_1(x,y)\rho(\dd x)\rho(\dd y),\qquad W_1(x,y) = W(x,y) + \sum_{k=0}^m n_k(x)n_k(y)\,. 
\end{equation}

The conditions on $\varphi$ required in Assumptions~\ref{assu:PoC} and~\ref{assum:PL} are easily checked on specific models. For instance, for quadratic interaction as in Section~\ref{sec:intro_twoscale}, $\varphi(x) = a x$ for some $a\in\R$, hence $\varphi$ is linear and a fortiori Lipschitz continuous (and so is $\na \varphi$). For the attractive Gaussian interaction~\eqref{eq:Gaussianinteraction}
\[n_k(x) = \sqrt{\alpha} e^{-\frac{x^2}{2}} \frac{x^k }{\sqrt{k!}} \,, \]
so that $\varphi$ is bounded with bounded derivatives of all orders.

We now turn to the conditions on $\mathcal E_0$. First,
\[D\mathcal E_0(\rho,x) =  \na V_1(x) + \sum_{k\geqslant 0} \na m_k(x) \rho(m_k) - \sum_{k>m} \na n_k(x) \rho(n_k) \,.  \]
Besides, it can be simpler to check the conditions by writing
\begin{equation}
\label{loc:exempleqfze}
D\mathcal E_0(\rho,x) = D\mathcal E(\rho,x) + \na \varphi(x)\cdot \rho\po \varphi(x)\pf = \na V_1(x) + \int_{\R^d} \na_x W(x,y)\rho(\dd y) + \na \varphi(x)\cdot \rho\po \varphi\pf \,.
\end{equation}
Assume that $\na V_1$ satisfies a one-sided Lipschitz condition~\eqref{eq:onesidedLip} (which is the case for instance if $V_1$ is  convex outside a compact set). With the conditions we checked on $\varphi$, we already have that $x\mapsto \na \varphi(x)\cdot \rho(\varphi)$ satisfies this condition (for a linear function $f$, $(\na f(x) - \na f(y))\rho(f)$ is zero, and for a bounded function $f$ with second order derivative bounded, $|\na f(x) - \na f(y)||\rho(f)|\leqslant \|\na^2 f\|_\infty\|f\|_\infty|x-y|$). Then, if $\na_x^2 W$ is bounded (which is for instance the case for the Gaussian interaction~\eqref{eq:Gaussianinteraction}), the remaining term in~\eqref{loc:exempleqfze} is $\|\na_x^2 W\|_\infty$-Lipschitz in $x$ and  finally~\eqref{eq:onesidedLip} holds.

Similarly, assuming that $\na_{x,y}^2 W$ is bounded,
\[|D\mathcal E_0(\nu,x) - D\mathcal E_0(\mu,x)| \leqslant  \po \|\na_{x,y}^2 W\|_\infty + \|\na \varphi\|_\infty^2\pf  \mathcal W_2(\nu,\mu) \,, \]
so that~\eqref{eq:Lipschitzwrtmu} holds.

Under the conditions discussed above, we have thus already checked the conditions in Assumption~\ref{assu:PoC}. Concerning Assumption~\ref{assum:PL}, the lower-bounded~\eqref{eq:lowerboundE0m2} holds for instance if, in~\eqref{loc:exampleqsdsf},  we can decompose $W_1(x,y)= - |A(x-y)|^2 + W_2(x,y)$ for some matrix $A$ and some bounded function $W_2$  (which corresponds to the assumption we have already made on the functions $n_k,m_k$) and if $V(x) \geqslant c_0 |x|^2 + |Ax|^2 - C$ for some constants $c_0,C>0$. Moreover, with this form,
\[\frac{\delta\mathcal E_0}{\delta m}(\rho,x) = V(x) - \int_{\R^d} |A(x-y)|^2 \rho(\dd y) + \int_{\R^d} W_2(x,y)\rho(\dd y)\,.  \]
Assume that $V=V_c+V_b$ where $x\mapsto V_c(x) - |Ax|^2$ is $\kappa$-strongly convex and $V_b$ is $M$-bounded for some $\kappa,M>0$. Then, for all $\rho\in\mathcal P_2(\R^d)$, $\frac{\delta\mathcal E_0}{\delta m}(\rho,x)$ is the sum of $V_c(x) -  \int_{\R^d} |A(x-y)|^2 \rho(\dd y)$, which is $\kappa$-strongly convex as a function of $x$, and of $V_b + \int_{\R^d} W_2(\cdot,y)\rho(\dd y)$, which is $M+\|W_2\|_\infty$-bounded. The fourth condition of Assumption~\ref{assum:PL} is thus satisfied.

Finally, since the two first terms in the right hand side of~\eqref{loc:examplesfdfed} are (flat) convex as a function of $\rho$, they do not contribute when checking~\eqref{eq:convexity-c}, and we see that for all $\nu,\nu'\in\mathcal P_2(\R^d)$ and $t\in[0,1]$,
\begin{align*}
\mathcal E_0(t\nu + (1-t) \nu') -  t \mathcal E_0(\nu) - (1-t)\mathcal E_0(\nu') & \leqslant  t(1-t) \sum_{k>m} \po \nu(n_k)-\nu'(n_k)\pf^2 \\
& = t(1-t)   \sum_{k>m} \po  \mathbb E  \co n_k(Y)-n_k(Y')\cf \pf ^2\,,
\end{align*}
with $(Y,Y')$ a coupling of $(\nu,\nu')$. Taking an optimal coupling for the total variation distance, in the sense that $\|\nu-\nu'\|_{TV} =2\mathbb P(Y\neq Y')$, 
\begin{align*}
\sum_{k>m} \po  \mathbb E  \co n_k(Y)-n_k(Y')\cf \pf ^2 &\leqslant \frac14 \|\nu-\nu'\|_{TV}^2 \sum_{k>m} \po  \mathbb E  \co n_k(Y)-n_k(Y') \ | \ Y\neq Y'\cf \pf ^2
\\
&\leqslant \frac14 \|\nu-\nu'\|_{TV}^2  \mathbb E  \co \left. \sum_{k>m} \po  n_k(Y)-n_k(Y')\pf ^2 \ \right| \ Y\neq Y'\cf  \\
&\leqslant \|\nu-\nu'\|_{TV}^2\left\| \sum_{k>m} n_k^2 \right \|_\infty\,.
\end{align*}
This shows that~\eqref{eq:convexity-c} holds with
\[\eta = \left\| \sum_{k>m} n_k^2 \right \|_\infty\] 
If we can take $m$ large enough so that $\eta<1/2$ (which is the case in particular when $\sum_{k\geqslant 0} \|n_k\|_\infty^2< \infty$) then Assumption~\ref{assum:PL} is satisfied.


\section{Numerical experiments}\label{sec:numerique}

In this section we consider models where the temperature $\sigma^2$ in~\eqref{eq:energieLibre} is not necessarily equal to $1$. When implementing the coupled process~\eqref{eq:thetaXcouples}, we apply it to the free energy $\sigma^{-2}\mathcal F = \sigma^{-2}\mathcal E + \mathcal H$, so that in~\eqref{eq:geneE} $\varphi$ has to be replaced by $\varphi/\sigma$; we then rescale $\Theta$ into $\Theta/\sigma$ and $\varepsilon$ into $\varepsilon/\sigma^2$, so that the final rescaled process reads
\begin{equation}
\label{eq:rescaled-coupled}
\left\{
\begin{array}{rcl}
\forall i\in\cco 1,N\ccf,\qquad \dd X_t^i &=& - \sigma^{-2}\na_{x_i} V_N(\bX_t) \dd t + \Theta_t \cdot \na \varphi(X_t^i) \dd t + \sqrt{2}\dd B_t^i\\
\dd \Theta_t &=& - \varepsilon\po \sigma^2\Theta_t - \frac1N\sum_{i=1}^N \varphi(X_t^i)\pf \dd t + \sqrt{2\varepsilon/\beta} \dd B_t\,.
\end{array}\right.
\end{equation}
In all experiments, it is simulated using an Euler--Maruyama scheme with step-size $h=0.01$.

\subsection{Double-well potential}
\label{subsec:double-well}

We first consider a Curie--Weiss model corresponding to the quadratic energy~\eqref{eq:self-consistent} in dimension 1 with
\[
V(x) = \frac{x^4}{4} - \frac{x^2}{2}\,, \qquad W(x-y) = \frac{\lambda}{2}\,|x-y|^2\,, \quad \lambda = 1\,,
\]
so that the feature reduces to $\varphi(x)=x$ (hence $m=1$ and $\Theta_t\in\R$). There is a critical temperature $\sigma_c^2 = 0.68^2 \approx 0.46$ such that: for $\sigma^2 > \sigma_c^2$ the free energy $\mathcal F$ admits a unique minimiser, a symmetric density centred at $0$; while for $\sigma^2 < \sigma_c^2$ it admits two minimisers $\mu_+$ and $\mu_-$, symmetric of one another, given by the self-consistency equation~\eqref{eq:self-consistent} and with opposite barycentres $\pm m_*$ (numerically, $m_* \approx 0.79$ at the sub-critical temperature $\sigma^2=0.25$ used in the experiments below). Throughout, the reference measures $\mu_\pm$ are obtained by relaxing a standard Langevin dynamics~\eqref{eq:LangevinN} from Gaussian initial conditions $\mathcal N(\pm 1, 0.5)$.

\paragraph{The two stationary regimes.}
Before applying the two-scale sampler, we illustrate the two stationary regimes relying on the plain mean-field particle system~\eqref{eq:LangevinN}, that is, the standard interacting Langevin dynamics associated with $\mathcal F$ \emph{without} the macroscopic variable $\Theta$. For $N$ particles it reads
\begin{equation}
\label{eq:dw-uncoupled}
\dd X_t^i = -V'(X_t^i)\dd t - \lambda\po X_t^i - \bar X_t\pf\dd t +\sqrt{2\sigma^2}\,\dd B_t^i\,,\qquad \bar X_t = \frac1N\sum_{j=1}^N X_t^j\,,
\end{equation}
for $i\in\cco 1,N\ccf$, where $V'(x) = x^3 - x$, the interaction strength is $\lambda = 1$, and $\sigma^2$ is the temperature. We integrate~\eqref{eq:dw-uncoupled} with an Euler--Maruyama scheme of step $h = 0.01$, running it independently from two i.i.d. initial distributions, $X_0^i \sim \mathcal N(+1, 0.5)$ and $X_0^i \sim \mathcal N(-1, 0.5)$, with $N = 5000$ particles up to $T = 100$. For each run we record the final empirical distribution of the particles and the barycentre trajectory $t\mapsto \bar X_t$.

The outcome depends on whether the temperature lies below or above $\sigma_c^2$. In the sub-critical regime $\sigma^2 = 0.25 < \sigma_c^2$ (Figure~\ref{fig:dw-basic-sub}), the two runs relax towards two distinct stationary solutions $\mu_+$ and $\mu_-$, symmetric of one another, whose barycentres settle at $\approx +0.79$ and $\approx -0.79$: the initial condition alone selects which of the two minimisers is reached, and a trajectory started near one of them remains there. In the super-critical regime $\sigma^2 = 0.64 > \sigma_c^2$ (Figure~\ref{fig:dw-basic-super}), both runs relax, whatever their starting point, to the same symmetric solution centred at $0$. As we see, only the sub-critical case corresponds to the metastable situation we are interested in: below $\sigma_c^2$ the free energy has several minimisers, and the plain dynamics cannot move from one to another on a reasonable time scale. We focus on this regime in the rest of the section.

\begin{figure}[htbp]
    \centering
    \includegraphics[width=0.9\linewidth]{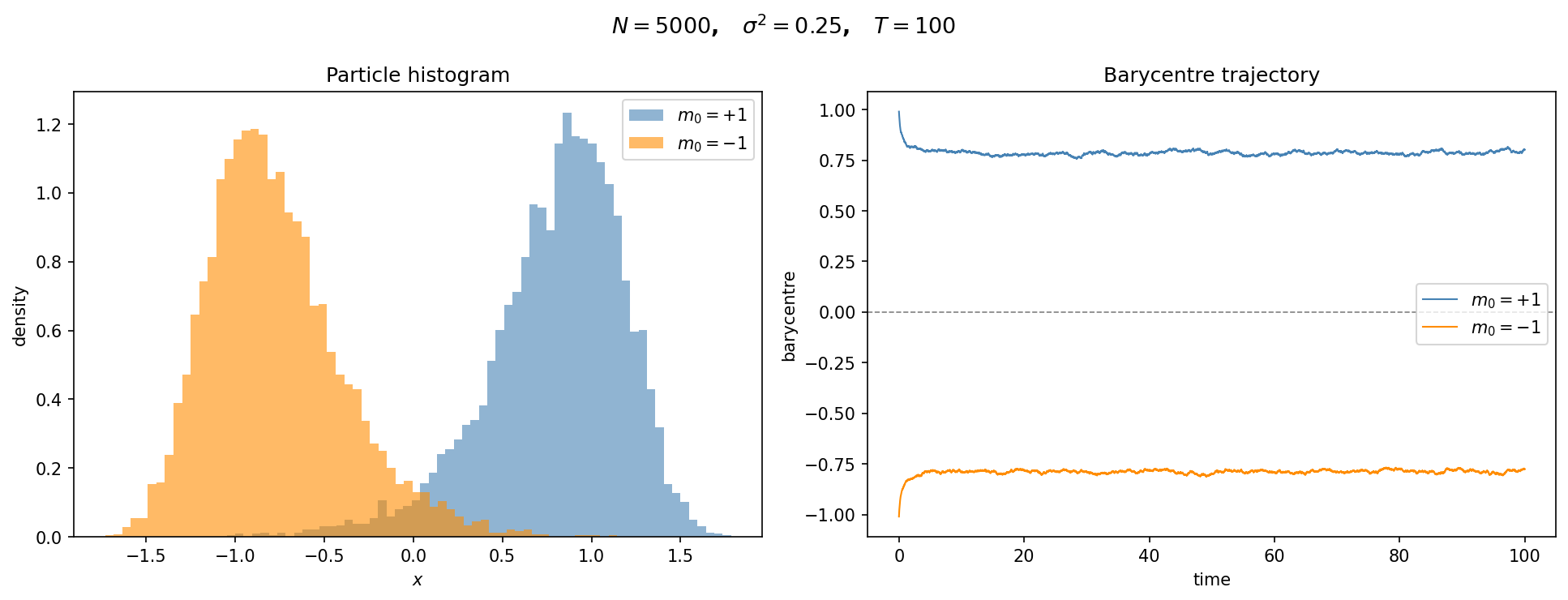}
    \caption{Sub-critical regime, $\sigma^2 = 0.25 < \sigma_c^2$. Uncoupled system~\eqref{eq:dw-uncoupled} with $N = 5000$, $T = 100$, started from $\mathcal N(\pm1, 0.5)$. Left: final histograms of the particles for the two initial conditions $m_0 = \pm 1$, concentrating around the two stationary solutions $\mu_\pm$. Right: the two barycentre trajectories, settling at $\pm m_* \approx \pm 0.79$.}
    \label{fig:dw-basic-sub}
\end{figure}

\begin{figure}[htbp]
    \centering
    \includegraphics[width=0.9\linewidth]{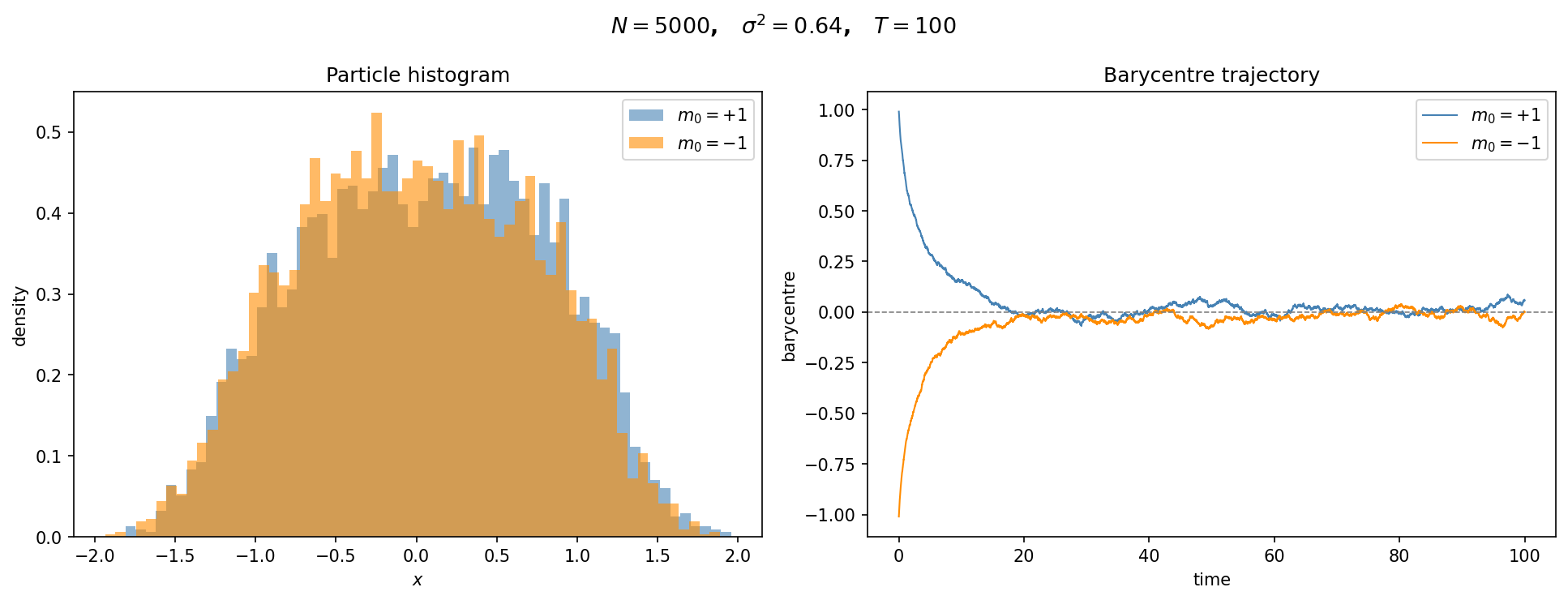}
    \caption{Super-critical regime, $\sigma^2 = 0.64 > \sigma_c^2$. Same setting as Figure~\ref{fig:dw-basic-sub}. From either initial condition the particles relax to the single stationary solution centred at $0$, and the two barycentre trajectories converge to $0$.}
    \label{fig:dw-basic-super}
\end{figure}

\paragraph{Transitions, Wasserstein distance and the barycentre as a proxy.}
We now turn on the coupling between the particles and an additional macroscopic variable, simulating the two-scale system~\eqref{eq:thetaXcouples} for the double well. Taking $\varphi(x)=x$ (so $\Theta_t\in\R$) and $V_N = \sum_i V_1(X_t^i)$ with $V_1 = V + x^2/2$, and using $V_1'(x)=x^3$, the coupled system reads
\begin{equation}
\label{eq:dw-coupled}
\dd X_t^i = -\frac{(X_t^i)^3}{\sigma^2}\,\dd t + \Theta_t\,\dd t + \sqrt{2}\,\dd B_t^i\,,\qquad
\dd \Theta_t = -\varepsilon\po \sigma^2 \Theta_t - \bar X_t\pf\,\dd t + \sqrt{2\varepsilon/\beta}\,\dd W_t\,,
\end{equation}
for $i\in\cco 1,N\ccf$, where the mean-field interaction of the previous paragraph is now \emph{replaced} by the slow variable $\Theta_t$, which tracks the barycentre $\bar X_t$ at speed $\varepsilon$. We integrate~\eqref{eq:dw-coupled} with the same Euler--Maruyama scheme, and compare the particles to the two reference solutions $\mu_+$ and $\mu_-$ obtained as above. To locate the transitions we use two indicators: the quadratic Wasserstein distances $\mathcal W_2(\pi_{\bX_t},\mu_\pm)$ from the empirical distribution of the particles to the references (which we approximate by the empirical distribution of the particles from Figure~\ref{fig:dw-basic-sub}), and the barycentre $\bar X_t$ itself. Recall that, for two probability measures $\mu,\nu$ on $\R^d$,
\[
\mathcal W_2(\mu,\nu)^2 = \inf_{\gamma \in \Pi(\mu,\nu)} \int_{\R^d\times\R^d} |x-y|^2 \, \gamma(\dd x,\dd y)\,,
\]
where $\Pi(\mu,\nu)$ is the set of couplings of $\mu$ and $\nu$; in dimension $d=1$, for two empirical measures carried by $N$ points, this reduces to the sorted $L^2$ distance $\mathcal W_2(\pi_{\bx},\pi_{\by})^2 = \frac1N\sum_{i=1}^N (x_{(i)}-y_{(i)})^2$, with $x_{(1)}\leqslant\dots\leqslant x_{(N)}$ and $y_{(1)}\leqslant\dots\leqslant y_{(N)}$ the order statistics.

Figure~\ref{fig:dw-w2} shows a run at $\sigma^2 = 0.25$, $\varepsilon = 0.07$, $\beta = 0.4$, $N = 3000$, $T = 500$, displaying four quantities: the histograms of the particles conditioned on the current basin (top left), the barycentre $\bar X_t$ (top right), the trajectory of $\Theta_t$ (bottom left), and the two distances $\mathcal W_2(\pi_{\bX_t},\mu_\pm)$ (bottom right). Here conditioning on the current basin means that, at each recorded time, the population is assigned to $\mu_+$ or $\mu_-$ according to the sign of the barycentre $\bar X_t$, and the particle positions are pooled separately for the two cases; the two resulting histograms are then compared to the references $\mu_\pm$. Driven by $\Theta_t$, the population moves back and forth between the two basins, spending here about $68\%$ of the time in $\mu_+$ and $32\%$ in $\mu_-$. The two conditional histograms only \emph{resemble} the references $\mu_\pm$ and are noticeably broader: pooling every recorded time spent in a basin describes the fluctuations (at temperature $\beta^{-1}$, not vanishing as $N\rightarrow \infty$) around the minimiser.  
The two $\mathcal W_2$ distances exchange cleanly at each transition. Crucially, the barycentre $\bar X_t$ carries exactly the same transition information as the pair of $\mathcal W_2$ distances, at a negligible cost: this validates the barycentre as a cheap proxy for detecting transitions, which is relevant in high dimension where $\mathcal W_2$ becomes prohibitive to compute.

\begin{figure}[htbp]
    \centering
    \includegraphics[width=0.95\linewidth]{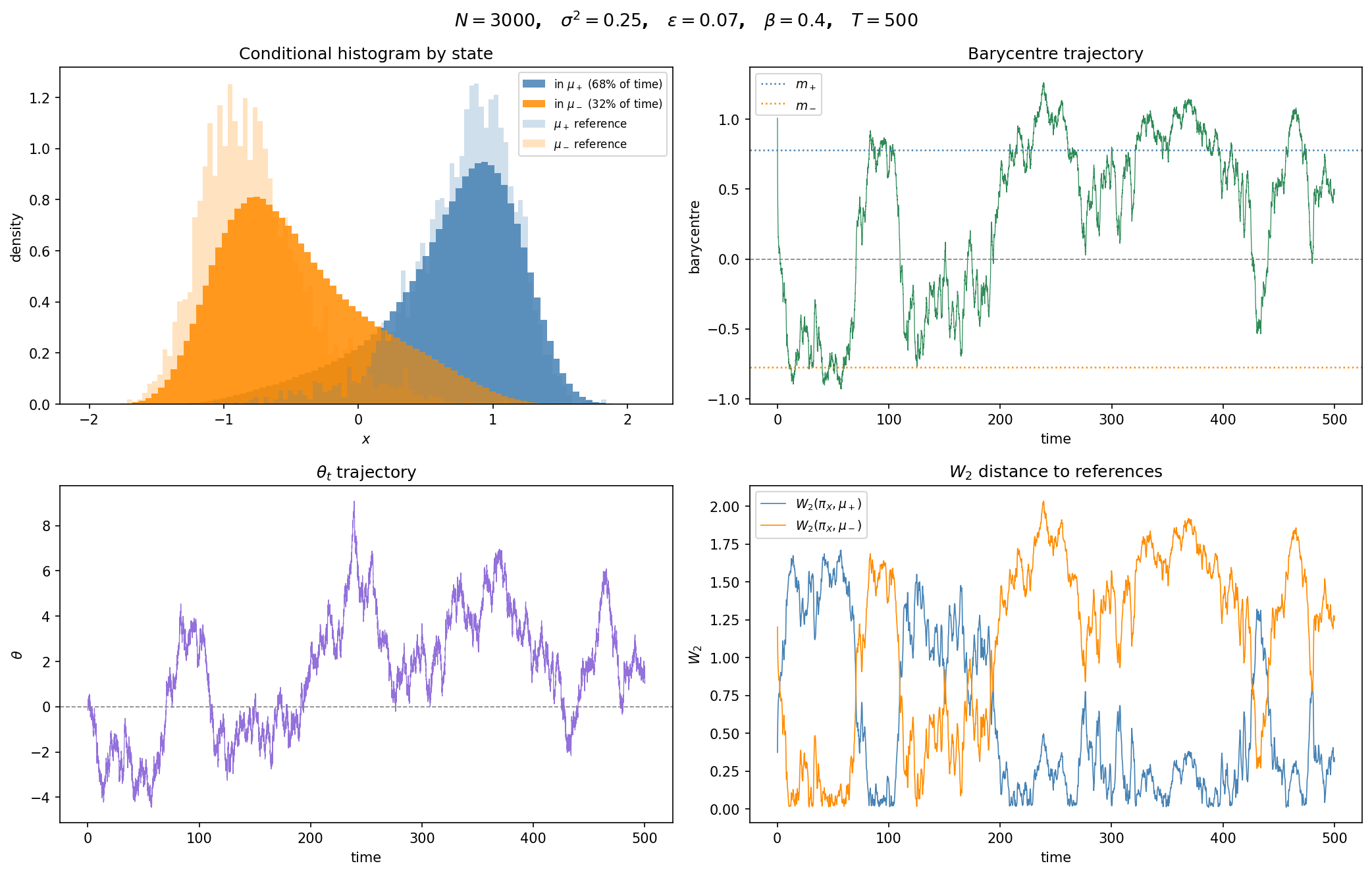}
    \caption{Coupled two-scale dynamics~\eqref{eq:dw-coupled} at $\sigma^2 = 0.25$, $\varepsilon = 0.07$, $\beta = 0.4$, $N = 3000$, $T = 500$. Top left: histograms of the particles conditioned on the current basin, compared to the references $\mu_\pm$ (light). Top right: barycentre $\bar X_t$, with the reference levels $m_\pm$. Bottom left: trajectory of $\Theta_t$. Bottom right: distances $\mathcal W_2(\pi_{\bX_t},\mu_\pm)$. The barycentre and the $\mathcal W_2$ distances signal the same transitions.}
    \label{fig:dw-w2}
\end{figure}

\paragraph{Influence of $\beta$ on the transitions.}
The macroscopic variable $\Theta_t$ follows a Langevin dynamics at inverse temperature $\beta$ in the coarse-grained potential: the noise intensity $\sqrt{2\varepsilon/\beta}$ decreases with $\beta$, so $\beta$ controls the frequency of the transitions. Figure~\ref{fig:dw-beta} compares three runs of~\eqref{eq:dw-coupled} at fixed $\varepsilon = 0.03$, $N = 1500$, $T = 500$, $\sigma^2 = 0.25$, for $\beta \in \{0.05,\, 0.6,\, 2\}$. Each row shows the trajectory of $\Theta_t$ (left), its histogram together with the fraction of time spent on each side of $\Theta = 0$ (centre), and the distances $\mathcal W_2(\pi_{\bX_t},\mu_\pm)$ to the two references (right). On the $\Theta$ axis the two basins are separated by $\Theta = 0$ and centred at the deterministic fixed points $\theta^*_\pm = m_\pm/\sigma^2 \approx \pm 3.1$.

At small $\beta = 0.05$ the macroscopic noise is strong and $\Theta_t$ crosses very frequently between the two basins, visiting both in comparable proportion. As $\beta$ increases the crossings become progressively rarer: at $\beta = 0.6$, and even more at $\beta = 2$, the trajectory dwells for long stretches in one basin and only switches occasionally.   The number of transitions thus decreases as $\beta$ increases --- the expected metastability/temperature trade-off, now carried by the one-dimensional variable $\Theta_t$ rather than by the full particle system --- so that $\beta$ tunes how aggressively the sampler explores the different minimisers.

\begin{figure}[htbp]
    \centering
    \includegraphics[width=0.9\linewidth]{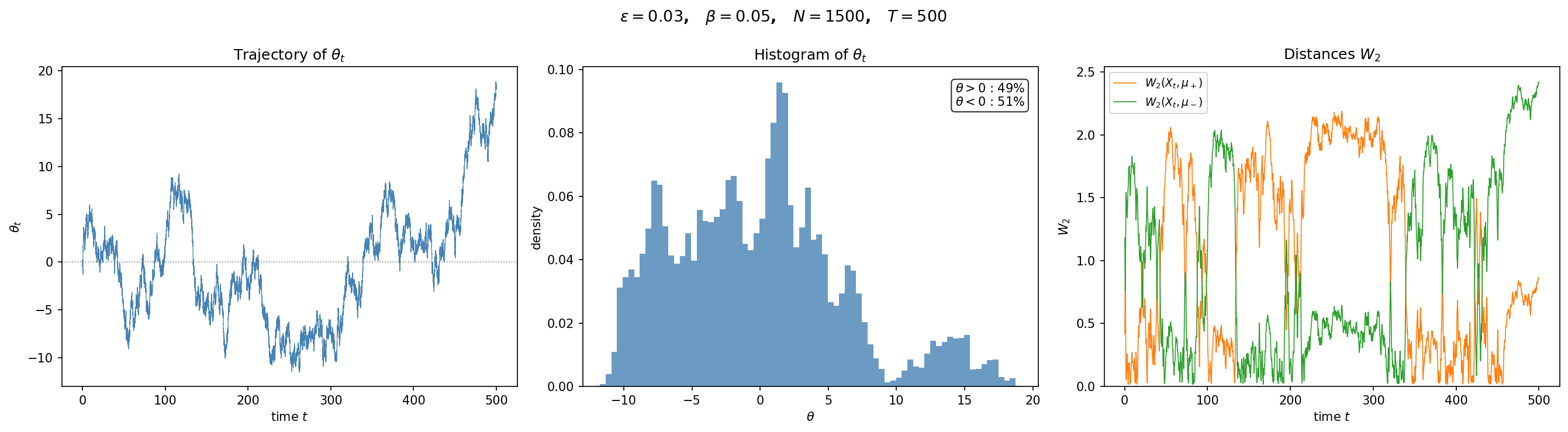}\\[2pt]
    \includegraphics[width=0.9\linewidth]{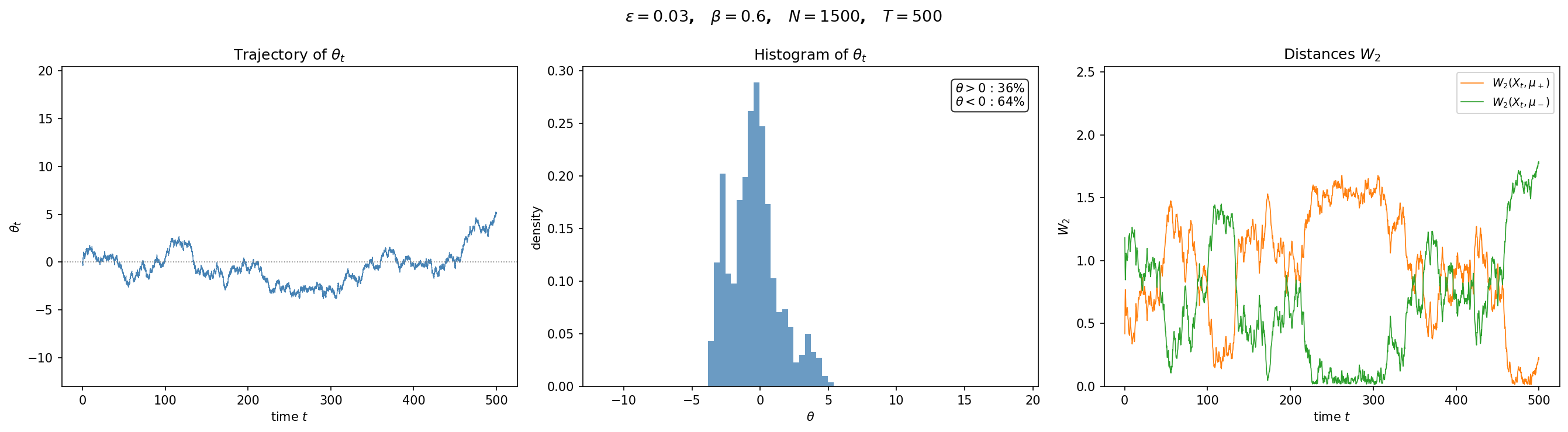}\\[2pt]
    \includegraphics[width=0.9\linewidth]{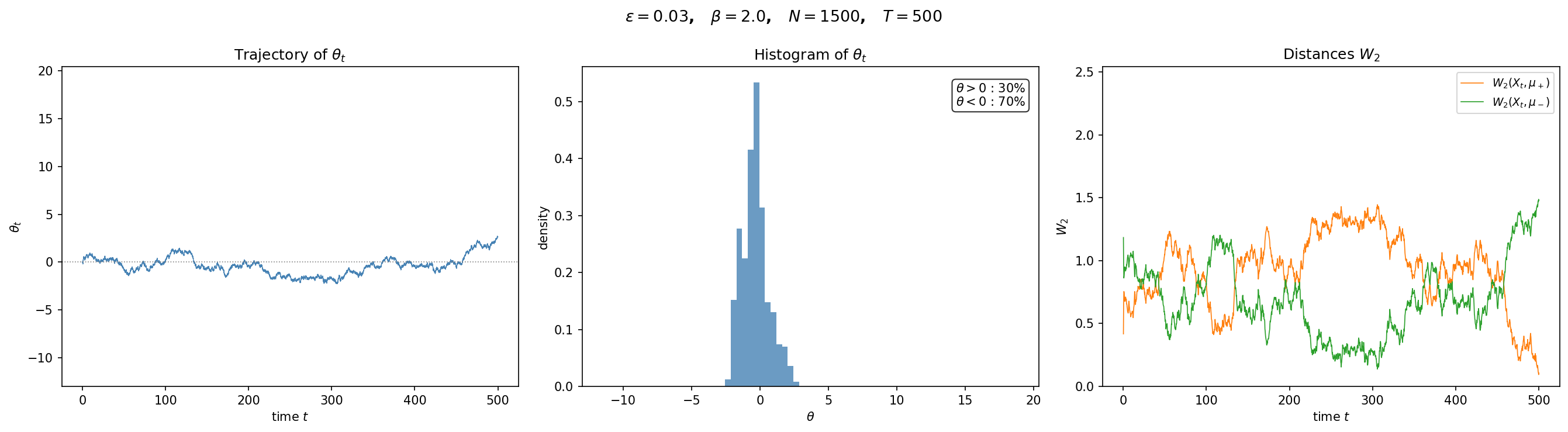}
    \caption{Influence of $\beta$ at fixed $\varepsilon = 0.03$, $N = 1500$, $T = 500$, $\sigma^2 = 0.25$. From top to bottom: $\beta = 0.05$, $0.6$, $2$. Each row shows the trajectory of $\Theta_t$ (left), its histogram together with the fraction of time spent on each side of $\Theta=0$ (centre), and the distances $\mathcal W_2(\pi_{\bX_t},\mu_\pm)$ (right). The number of transitions decreases as $\beta$ increases.}
    \label{fig:dw-beta}
\end{figure}

\paragraph{Long-time distribution of $\Theta_t$ versus the target $\nu_\beta$.}
The previous runs show $\Theta_t$ crossing between the basins; we now check  that its long-time law is the one predicted by the averaging principle. When $\varepsilon$ is small, $\Theta_t$ evolves slowly enough for the particles to equilibrate at each frozen value of $\Theta$, and its stationary law should be the coarse-grained Gibbs measure $\nu_\beta \propto e^{-\beta\omega}$. In the present quadratic case $\omega$ is explicit and we evaluate it by numerical quadrature,
\[
\omega(\theta) = \frac{\sigma^2}{2\lambda}\,\theta^2 - \ln \int_{\R} \exp\po -\frac{1}{\sigma^2}\po V(x) + \frac{\lambda}{2}x^2\pf + \theta x \pf \dd x\,,
\]
with $\lambda = 1$. We integrate~\eqref{eq:dw-coupled} over $M = 20$ independent trajectories, with i.i.d.\ initial conditions $X_0^i \sim \mathcal N(-1, 0.5)$ and $\Theta_0 = \bar X_0$, pool all the recorded values of $\Theta_t$ into a single histogram (the several trajectories improve the statistics, and the horizon is long enough for $\Theta_t$ to visit both basins), and compare it to $\nu_\beta$. Figure~\ref{fig:dw-nu} does this at $\sigma^2 = 0.3$, $\beta = 0.5$, $N = 1000$, for two values of the speed parameter, $\varepsilon = 0.05$ (horizon $T = 10^4$) and $\varepsilon = 0.1$ (horizon $T = 10^3$). In both cases the empirical distribution matches the target $\nu_\beta$, and the agreement improves as $\varepsilon$ decreases (at horizon $T \propto 1/\varepsilon$), as expected since the averaging approximation becomes exact in the limit $\varepsilon \to 0$.

\begin{figure}[htbp]
    \centering
    \includegraphics[width=0.9\linewidth]{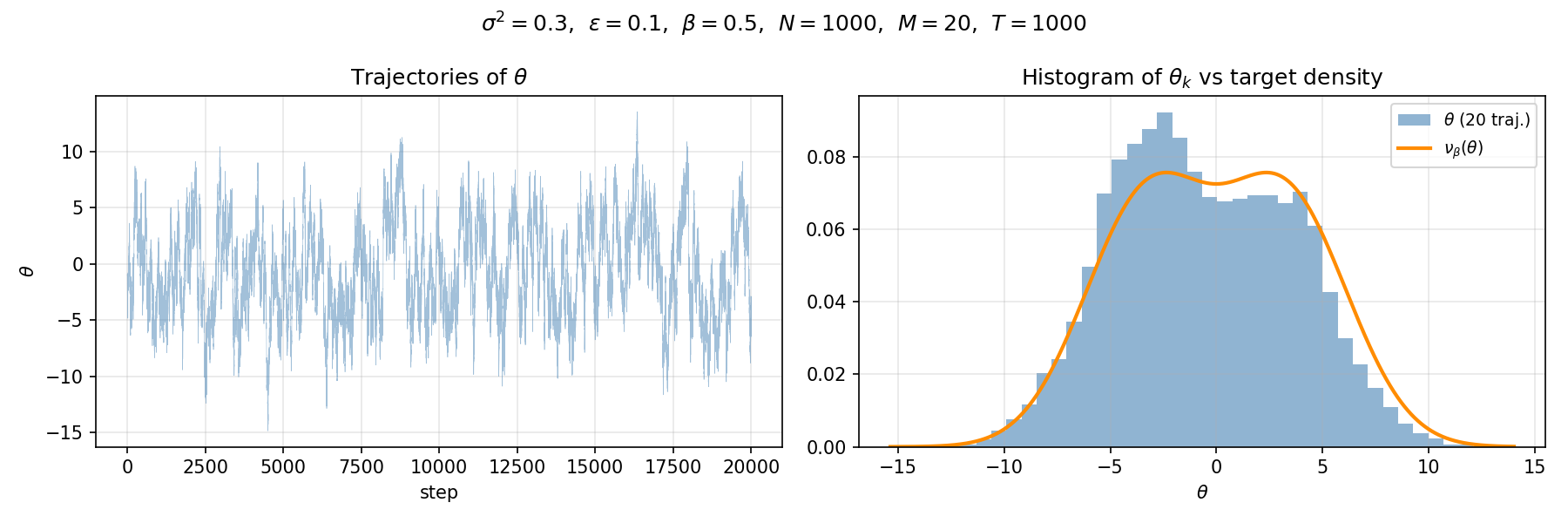}\\[2pt]
    \includegraphics[width=0.9\linewidth]{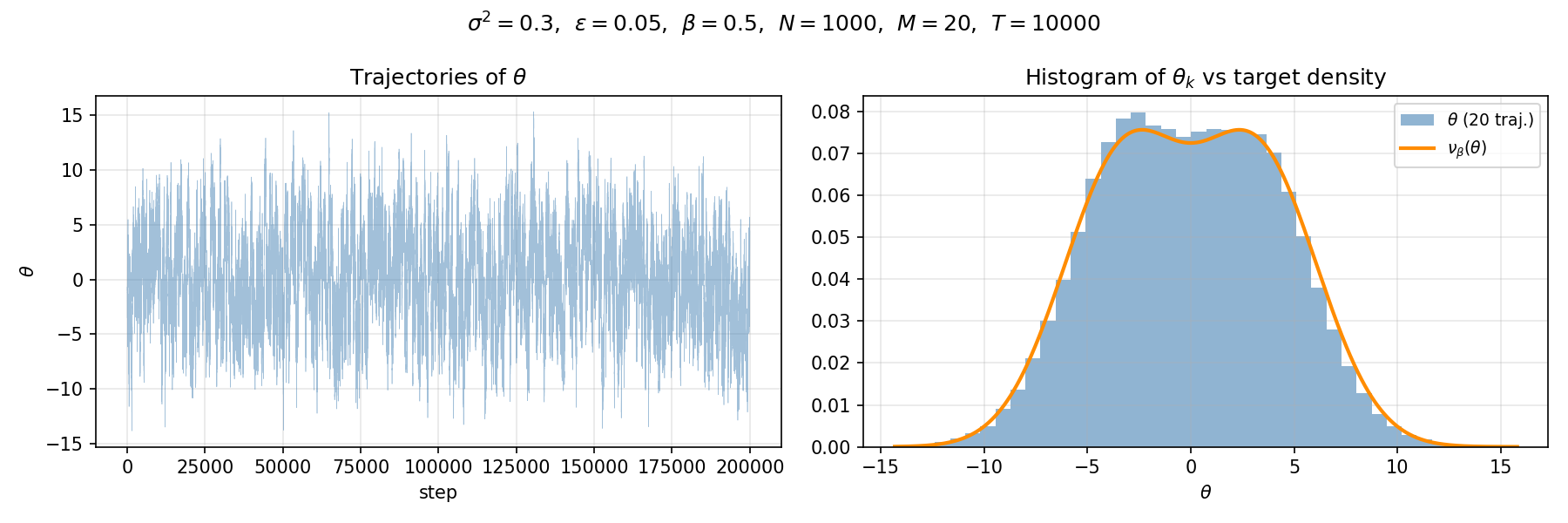}
    \caption{Long-time distribution of $\Theta_t$ at $\sigma^2 = 0.3$, $\beta = 0.5$, $N = 1000$, aggregated over $M = 20$ trajectories. Top: $\varepsilon = 0.05$, $T = 10^4$. Bottom: $\varepsilon = 0.1$, $T = 10^3$. Left: the $M$ trajectories of $\Theta_t$, concatenated one after another. Right: histogram of $\Theta_t$ against the target density $\nu_\beta \propto e^{-\beta\omega}$. }
    \label{fig:dw-nu}
\end{figure}

\subsection{Multi-well potential}
\label{subsec:multi-well}

We keep the quadratic interaction of Section~\ref{subsec:double-well} (feature $\varphi(x)=x$, so $\Theta_t\in\R$, and $\lambda = 1$), but replace the confining potential by the multi-well
\[
V(x) = \cos(x) + \cos\!\po \tfrac{x}{2}\pf + \frac{x^2}{100}\,.
\]
The difference with the double well is one of complexity rather than of nature. In the double-well case the potential $V$ already has two wells, and, below the critical temperature $\sigma_c^2$, the free energy has exactly two symmetric minimisers $\mu_\pm$, whereas above $\sigma_c^2$ it has a single one --- the whole picture being governed by a single critical temperature. Here $V$ has many wells of unequal depth, so the coarse-grained potential $\omega$ can have many  local minima and the free energy $\mathcal F$ several distinct localised stationary solutions $\mu_{\theta^*}$. On the range $[-30,30]$ the wells of $V$ sit at $x \approx \pm 3.57,\ \pm 8.73,\ \pm 15.89,\ \pm 20.97,\ \pm 28.21$ (ten wells), and more appear on a wider range. This is the genuinely multi-modal situation the two-scale sampler is designed for.

The specific choice of $V$ is deliberate. The two cosines $\cos x$ and $\cos(x/2)$ have different periods ($2\pi$ and $4\pi$), so their sum is not a regular array of identical wells: the wells have \emph{unequal depths} and \emph{unequal spacings}, and are therefore separated by barriers of varying height and width. The quadratic term $x^2/100$ plays two roles: its growth at infinity makes the Gibbs measure well-defined, while its very small coefficient leaves a large number of wells within reach --- ten on $[-30,30]$, sixteen on $[-60,60]$  --- with the outer wells lying progressively higher and hence carrying less mass. This yields a stringent, non-symmetric test: the sampler must discover many minimisers, cross heterogeneous barriers, and recover the correct relative weights of wells of different depths. It also mirrors the molecular-simulation setting of the introduction, where a weakly confined low-energy region contains many metastable states of unequal importance. Moreover, although we know that there is a single free energy minimizer at high  temperature and several localized ones at low temperature, the heterogeneity of the potential is such that we expect different localized mimizers to appear at different critical temperatures. For a given $\sigma^2$, the number of local minimizers is thus not clear at all, and this makes the sampling algorithm particularly useful: contrary to the double-well case considered in Section~\ref{subsec:double-well}, if we wanted to find the minimizers by initializing standard particle systems~\eqref{eq:LangevinN} with various starting distributions, we wouldn't know where to look and  when to stop.

\paragraph{Exploring the wells.}
We simulate the coupled system~\eqref{eq:thetaXcouples} with this $V$ (same convention as~\eqref{eq:dw-coupled}, with $V_1'(x) = -\sin x - \tfrac12\sin\tfrac{x}{2} + \tfrac{x}{50} + x$):
\begin{equation}
\label{eq:mw-coupled}
\dd X_t^i = -\frac{V_1'(X_t^i)}{\sigma^2}\,\dd t + \Theta_t\,\dd t + \sqrt{2}\,\dd B_t^i\,,\qquad
\dd \Theta_t = -\varepsilon\po \sigma^2 \Theta_t - \bar X_t\pf\,\dd t + \sqrt{2\varepsilon/\beta}\,\dd W_t\,.
\end{equation}
Since $\omega$ now has several minima, $\Theta_t$ is metastable and hops between them, and the population $\mu_{\Theta_t}$ correspondingly visits the associated stationary solutions. To identify which solution the particles currently sit in, we use a \emph{fork descent}: at regular intervals we duplicate the particle cloud and let the copy evolve under the $\Theta$-free mean-field Langevin dynamics~\eqref{eq:LangevinN} until its barycentre stabilises; the converged barycentre labels the well whose basin contains the current state. This detector needs no knowledge of $\omega$ and therefore carries over to the non-quadratic case of Section~\ref{subsec:non-quadratic}.

Figure~\ref{fig:mw-explore} shows a run at $\sigma^2 = 0.4$, $\varepsilon = 0.03$, $\beta = 0.05$, $N = 300$, $T = 5000$, with four panels: the potential $V$, the trajectory of $\Theta_t$, the barycentre $\bar X_t$ with the detected minima coloured per well, and the coarse-grained potential $\omega$ computed by quadrature,
\[
\omega(\theta) = \frac{\sigma^2}{2\lambda}\,\theta^2 - \ln \int_{\R} \exp\po -\frac{1}{\sigma^2}\po V(x) + \frac{\lambda}{2}x^2\pf + \theta x \pf \dd x\,.
\]
Over the horizon, $\Theta_t$ drives the population through several wells, and the barycentres returned by the fork descent cluster exactly on the minima of $\omega$ (dashed lines), confirming that the visited states are indeed the stationary solutions of $\mathcal F$.

\begin{figure}[htbp]
    \centering
    \includegraphics[width=\linewidth]{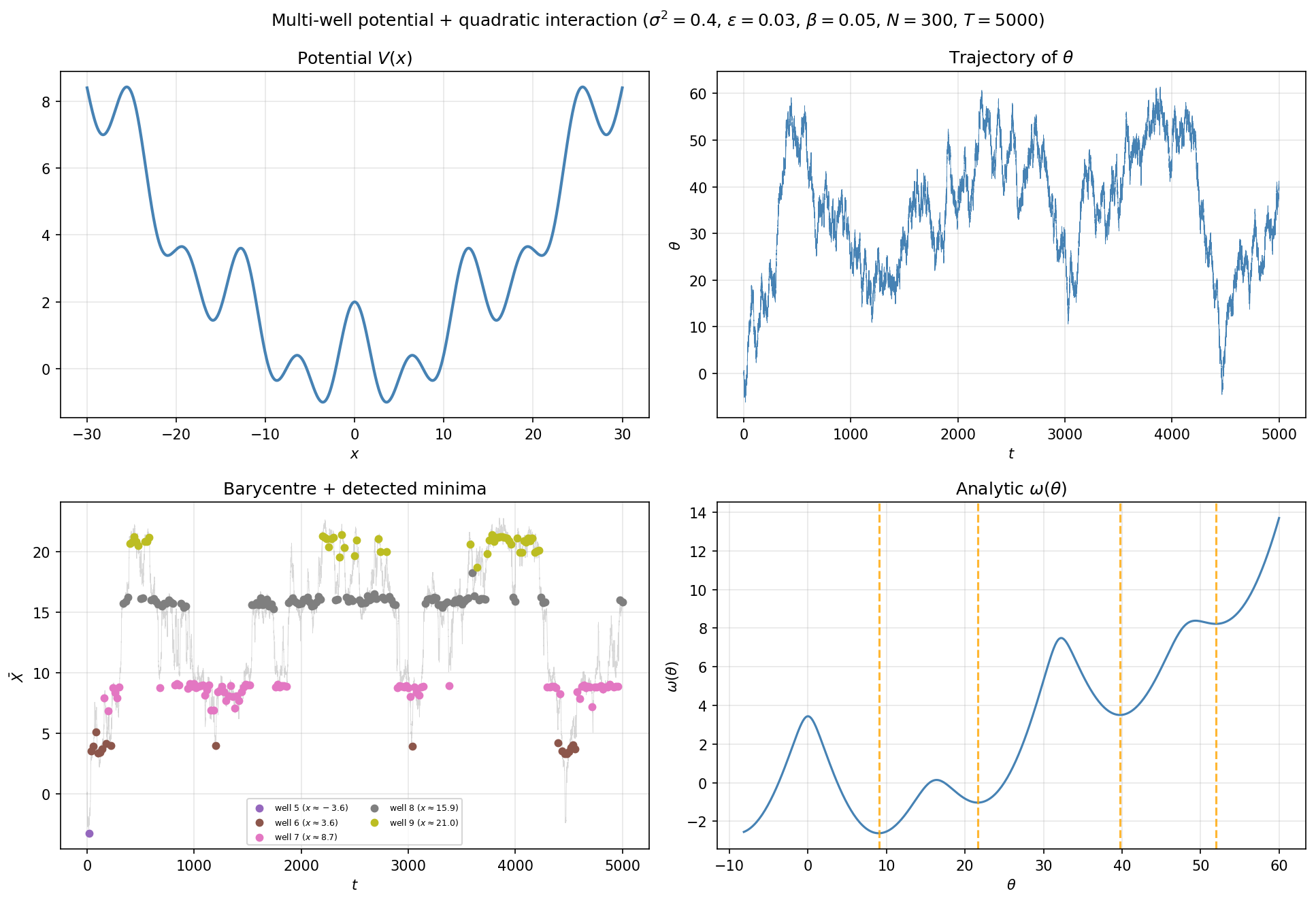}
    \caption{Multi-well potential with quadratic interaction, at $\sigma^2 = 0.4$, $\varepsilon = 0.03$, $\beta = 0.05$, $N = 300$, $T = 5000$. Top left: the potential $V$. Top right: the trajectory of $\Theta_t$. Bottom left: the barycentre $\bar X_t$ (grey) with the minima detected by the fork descent, coloured by well. Bottom right: the analytic coarse-grained potential $\omega(\theta)$, whose minima (dashed) match the detected barycentres.}
    \label{fig:mw-explore}
\end{figure}

\paragraph{Number of wells explored: influence of $\beta$.}
How many distinct wells the sampler reaches over a given horizon is governed by the macroscopic temperature $\beta^{-1}$. Figure~\ref{fig:mw-beta} reports a long run at $\beta = 0.05$ on the wide range $[-60,60]$ (sixteen wells), $T = 20000$, aggregated over $10$ simulations. The left panel shows the cumulative number of distinct wells visited as a function of time (for each simulation), which keeps growing and reaches $10$ to $14$ wells per trajectory; the right panel shows the visit frequency of each well: the central wells are visited most, the outer ones progressively less, and a fraction of the fork descents do not converge in the allotted time (bar \textsf{NC}).

A sweep over $\beta$ confirms the expected trade-off: for large $\beta$ the sampler stays in one or two wells but every descent converges, whereas for very small $\beta$ it touches almost every well but most descents fail to stabilise; an intermediate value $\beta \approx 0.05$--$0.1$ gives the broadest reliable exploration. This sweep is reported in Figure~\ref{fig:mw-beta-sweep}. 

\begin{figure}[htbp]
    \centering
    \includegraphics[width=\linewidth]{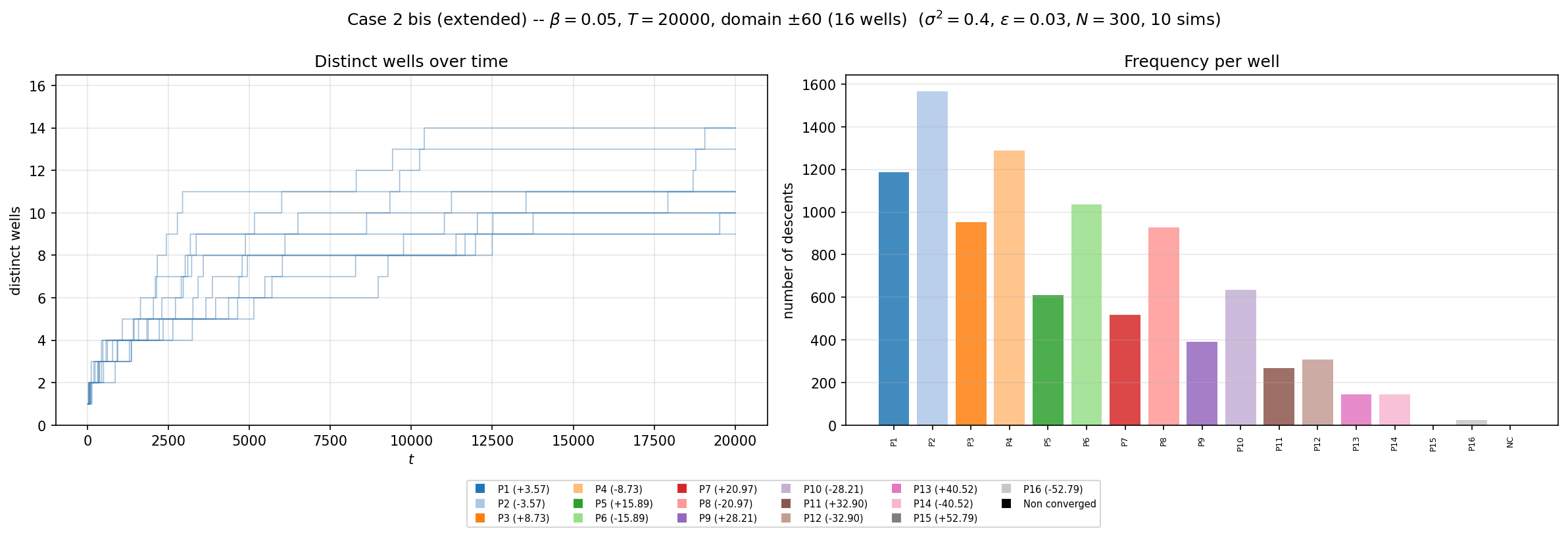}
    \caption{Well exploration at $\beta = 0.05$ on the range $[-60,60]$ (sixteen wells), $\sigma^2 = 0.4$, $\varepsilon = 0.03$, $N = 300$, $T = 20000$, over $10$ simulations. Left: cumulative number of distinct wells visited over time (one step curve per simulation). Right: number of fork descents falling in each well $P_1,\dots,P_{16}$, plus the non-converged ones (\textsf{NC}).}
    \label{fig:mw-beta}
\end{figure}

\begin{figure}[!htbp]
    \centering
    \includegraphics[width=0.92\linewidth]{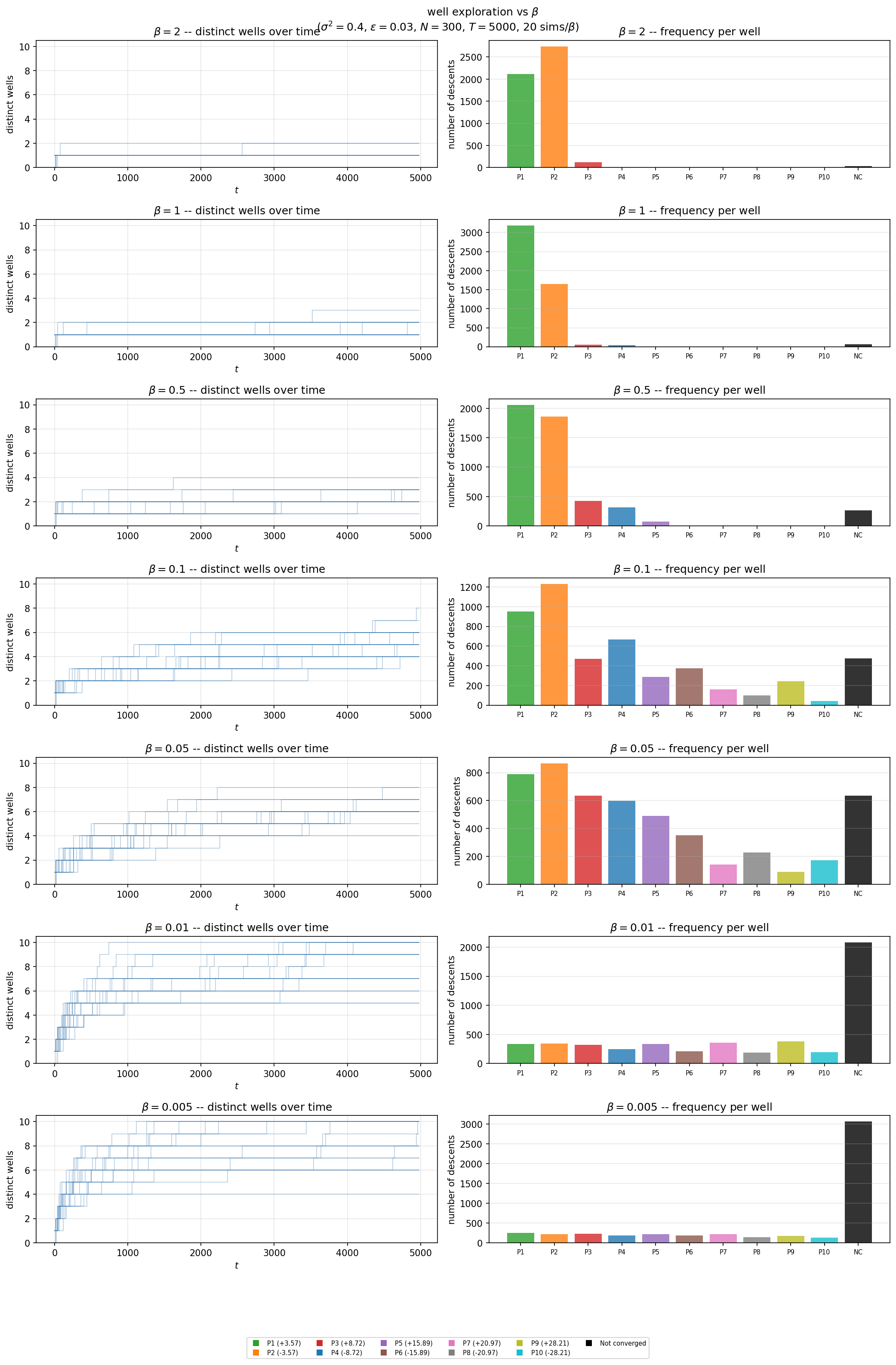}
    \caption{Well exploration as a function of $\beta$ on the range $[-30,30]$ (ten wells), $\sigma^2 = 0.4$, $\varepsilon = 0.03$, $N = 300$, $T = 5000$, with $20$ simulations per $\beta$. From top to bottom: $\beta = 2,\,1,\,0.5,\,0.1,\,0.05,\,0.01,\,0.005$. Left column: cumulative number of distinct wells visited over time (one step curve per simulation). Right column: number of fork descents falling in each well $P_1,\dots,P_{10}$, plus the non-converged ones (\textsf{NC}). As $\beta$ decreases the sampler reaches more wells, but the fraction of non-converged descents grows.}
    \label{fig:mw-beta-sweep}
\end{figure}

\paragraph{Long-time distribution of $\Theta_t$ versus $\nu_\beta$.}
Finally, as in the double-well case, we check that the long-time law of $\Theta_t$ matches the coarse-grained Gibbs measure $\nu_\beta \propto e^{-\beta\omega}$, with $\omega$ evaluated by quadrature. Figure~\ref{fig:mw-nu} shows this comparison at $\sigma^2 = 2$, $\beta = 1$ for three increasing computational budgets. Even the coarsest one ($N = 10^3$, $\varepsilon = 0.1$, $T = 500$) already captures the shape of $\nu_\beta$, and the match becomes very good for the richest one ($N = 10^4$, $M = 20$, $\varepsilon = 0.05$, $T = 10^4$), where the histogram reproduces the multi-modal target closely --- in line with the averaging principle, the approximation improving as $N$ and $M$ increase, the horizon $T$ grows and the speed $\varepsilon$ decreases.

\begin{figure}[!htbp]
    \centering
    \includegraphics[width=0.72\linewidth]{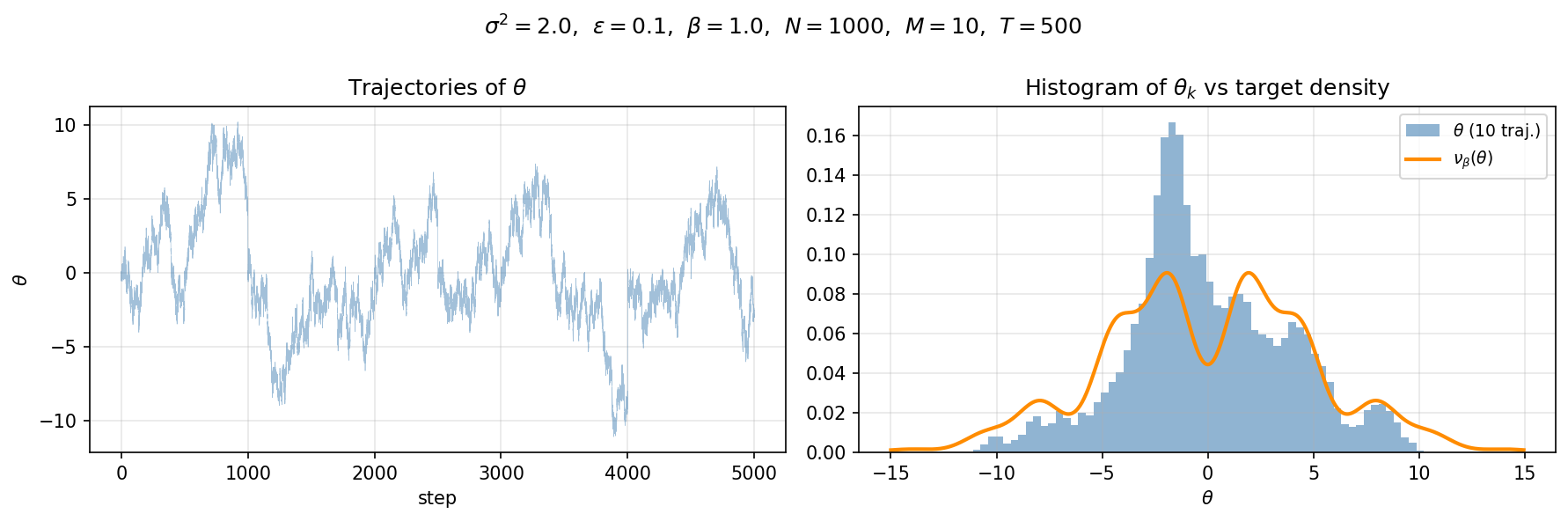}\\[2pt]
    \includegraphics[width=0.72\linewidth]{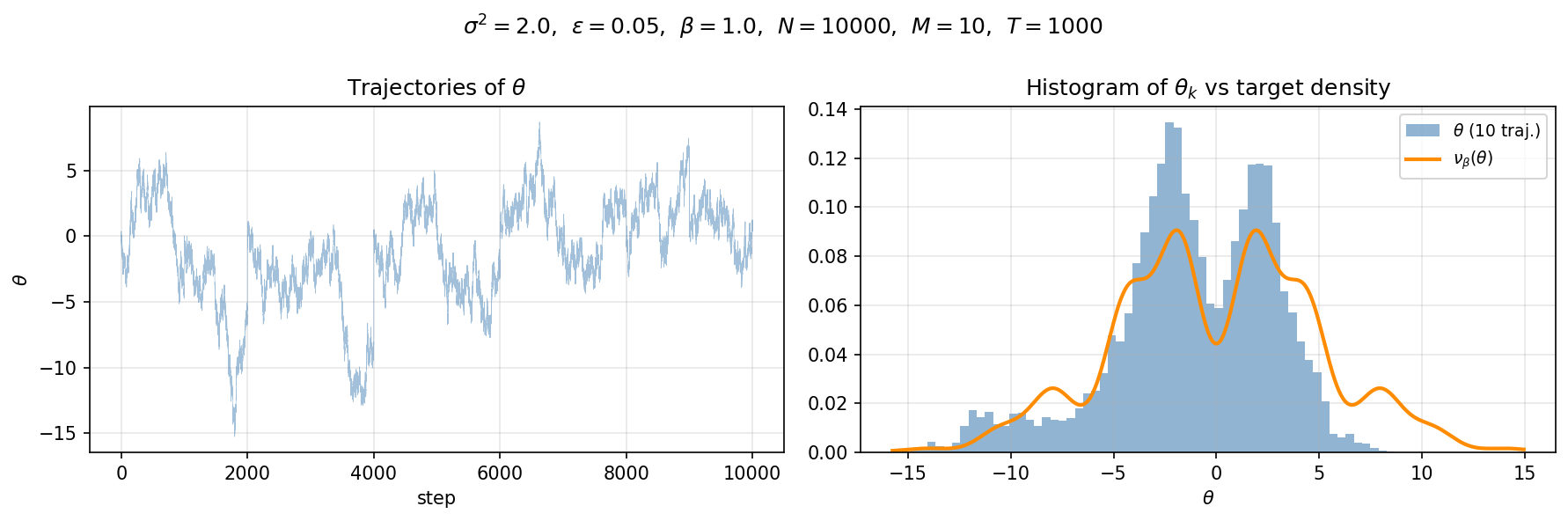}\\[2pt]
    \includegraphics[width=0.72\linewidth]{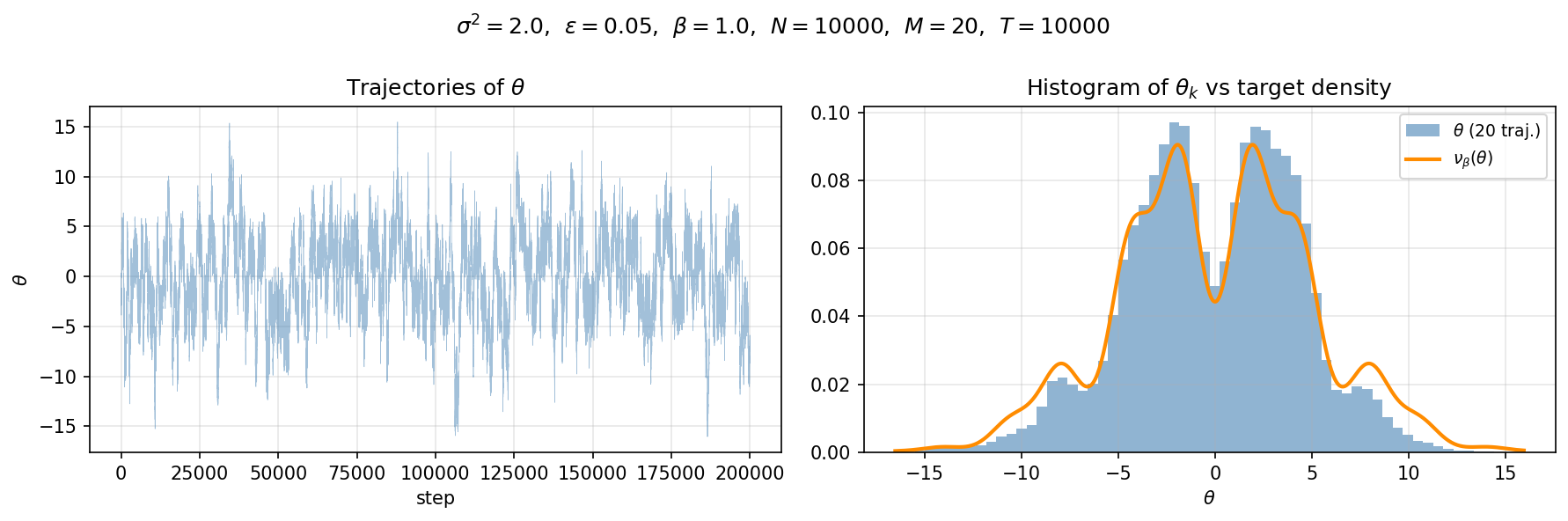}
    \caption{Long-time distribution of $\Theta_t$ for the multi-well potential at $\sigma^2 = 2$, $\beta = 1$, with an increasing computational budget (top to bottom). Top: $N=10^3$, $\varepsilon=0.1$, $M=10$, $T=500$. Middle: $N=10^4$, $\varepsilon=0.05$, $M=10$, $T=10^3$. Bottom: $N=10^4$, $\varepsilon=0.05$, $M=20$, $T=10^4$. Left: concatenated trajectories of $\Theta_t$; right: histogram of $\Theta_t$ against $\nu_\beta \propto e^{-\beta\omega}$. The match improves as $N$, $M$ and $T$ increase and $\varepsilon$ decreases.}
    \label{fig:mw-nu}
\end{figure}

\clearpage

\subsection{Non-quadratic interaction}
\label{subsec:non-quadratic}

We now leave the quadratic setting and consider the attractive Gaussian interaction
\begin{equation}\label{eq:expWtot}
W(x-y) = -\lambda\,\exp\po -\frac{|x-y|^2}{2\eta^2}\pf\,,    
\end{equation}
in dimension $d=1$, keeping the double-well confining potential $V(x) = x^4/4 - x^2/2$. Here $\lambda>0$ sets the strength of the attraction and $\eta>0$ its range. For the plain particle system~\eqref{eq:LangevinN} nothing changes --- one simply uses this $W$ --- and the same kind of phase transition as in Section~\ref{subsec:double-well} occurs, now at a temperature depending on $\lambda$ and $\eta$. Throughout this section, $\lambda = \eta = 1$ and $\sigma^2 = 0.15$.

\paragraph{Reduction to a finite macroscopic variable.}
 The two-scale construction   applies once $W$ is put in the general form~\eqref{eq:geneE}, through the expansion 
\begin{equation}\label{eq:Wexp}
    W(x-y) = -\lambda\,e^{-\frac{x^2+y^2}{2\eta^2}}\sum_{k\geqslant 0} \frac{(xy)^k}{\eta^{2k}k!}
       = -\lambda \sum_{k\geqslant 0} n_k(x)\,n_k(y)\,,\qquad
n_k(x) = e^{-\frac{x^2}{2\eta^2}}\,\frac{x^k}{\eta^k\sqrt{k!}}\,.
\end{equation}
In fact, we will rather implement the two-scale sampler associated to the truncated potential
\[
W_m(x-y) =  
        -\lambda \sum_{k=0}^m n_k(x)\,n_k(y)\,,
\]
for some $m\in\N$. In that case,  we take the feature $\varphi = (n_k)_{k\in\cco 0,m\ccf}:\R\rightarrow\R^{m+1}$. As in Section~\ref{sec:example}, it would have been possible to consider this feature (with a finite $m$) even with the full potential~\eqref{eq:Wexp}, by incorporating the remainder $-\frac{\lambda}{2}\sum_{k>m}\po\int n_k\dd\rho\pf^2 = \frac12 \rho((W-W_m)\star \rho)$ in the term $\mathcal E_0$. Instead, we neglect this remainder. We have two reasons to do this: first, it is computationally cheaper, since computing the pairwise interactions  has a complexity $\mathcal O(N^2)$ when the potential is $W$ (or $W-W_m$) but only $\mathcal O(Nm)$ with $W_m$ (since we only need to compute $ \frac1N \sum_{i=1}^N n_k(X_t^i)$  for $k\in\cco 0,m\ccf$). The second reason is that, in order to assess the properties of the algorithm, we want to compare it to a clear reference. In particular, as in Figures~\ref{fig:dw-nu} and~\ref{fig:mw-nu} we want to compare the distribution sampled by the macroscopic variable with the theoretical macroscopic Gibbs measure $\nu_\beta$. By truncating the potential, similarly to the previous experiments with quadratic interaction, we can compute $\omega$ over $\R^{1+m}$ by a simple quadrature for small values of $m$. As detailed below, the impact of the choice of $m$ is discussed in Figure~\ref{fig:nq-landscape} and, after that, the experiments are conducted with $m=3$ (the macroscopic variable $\Theta_t$ is thus a $4$-dimensional vector).

 Concretely, using the rescaling of~\eqref{eq:rescaled-coupled} and writing $\bar n_{k,t} = \frac1N\sum_{i=1}^N n_k(X_t^i)$ for the empirical feature means, the coupled system reads
\begin{equation}
\label{eq:nq-coupled}
\dd X_t^i = -\frac{V'(X_t^i)}{\sigma^2}\,\dd t + \sum_{k=0}^{m}\Theta_t^k\, n_k'(X_t^i)\,\dd t + \sqrt{2}\,\dd B_t^i\,,\quad
\dd \Theta_t^k = -\varepsilon\po \frac{\sigma^2}{\lambda}\,\Theta_t^k - \bar n_{k,t}\pf\dd t + \sqrt{2\varepsilon/\beta}\,\dd W_t^k\,.
\end{equation}

Nevertheless, when proceeding to fork descents (as explained in Section~\ref{subsec:multi-well}) with the standard Langevin process~\eqref{eq:LangevinN}, we still use the full potential~\eqref{eq:expWtot} (with complexity $\mathcal O(N^2)$ at each step). In other words, the truncation from $W$ to $W_m$ is another approximation (in top of the approximation in $N$ and $\varepsilon$), but we still use the resulting process as an algorithm to sample good initial conditions for a gradient descent converging to the local minimizers of the free energy corresponding to the Gaussian interaction~\eqref{eq:expWtot}.


\paragraph{Coarse-grained potential and stationary solutions.}

The stationary solutions of $\mathcal F$ are obtained by a fixed-point iteration on the self-consistency equation $\rho\propto e^{-(V+W\ast\rho)/\sigma^2}$.  As in the previous sections, for $m\in\N$, the coarse-grained potential is computed by quadrature,
\[
\omega(\theta) = \frac{\sigma^2}{2\lambda}\,|\theta|^2 - \ln \int_{\R} \exp\po -\frac{V(x)}{\sigma^2} + \theta\cdot \varphi(x) \pf \dd x\,,\qquad \mu_\theta \propto \exp\po -\frac{V}{\sigma^2} + \theta\cdot\varphi\pf\,.
\]Figure~\ref{fig:nq-landscape}(a) shows the phase transition: for the exact interaction~\eqref{eq:expWtot}, $\mathcal F$ has two minimisers $\mu_\pm$, with barycentres $\pm m_*$, below $\sigma_c^2\approx 0.275$, and a single symmetric one above; the truncated models (same $\mathcal F$ except that $W$ is replaced by $W_m$) reproduce this bifurcation, and converge to the exact one as $m$ increases. At $\sigma^2 = 0.15$, $m_*\approx 0.86$. As discussed in Section~\ref{sec:intro_twoscale}, the local minimisers of $\omega$ correspond to those of $\mathcal F$: Figure~\ref{fig:nq-landscape}(b) shows that $\mu_{\theta_*}$, at the two minima $\theta_*^\pm$ of $\omega$ (computed with $m=3$), coincides with the minimisers $\mu_\pm$ (up to an error $\mathcal W_2\approx 0.03$ for $m = 3$, a distance which decreases geometrically in $m$, see the inset plot), and Figure~\ref{fig:nq-landscape}(c) shows the profile of $\omega$ (for $m=3$) along $\theta^1$, the component conjugate to the odd feature $n_1$ that distinguishes the two wells: a symmetric double well with barrier $\Delta\omega\approx 0.29$, whose minima are indistinguishable in the graph from the values $\sigma^{-2}\mathcal F(\mu_\pm)$ with the full potential $W$ (as determined by the fixed-point procedure).

\begin{figure}[htbp]
    \centering
    \includegraphics[width=\linewidth]{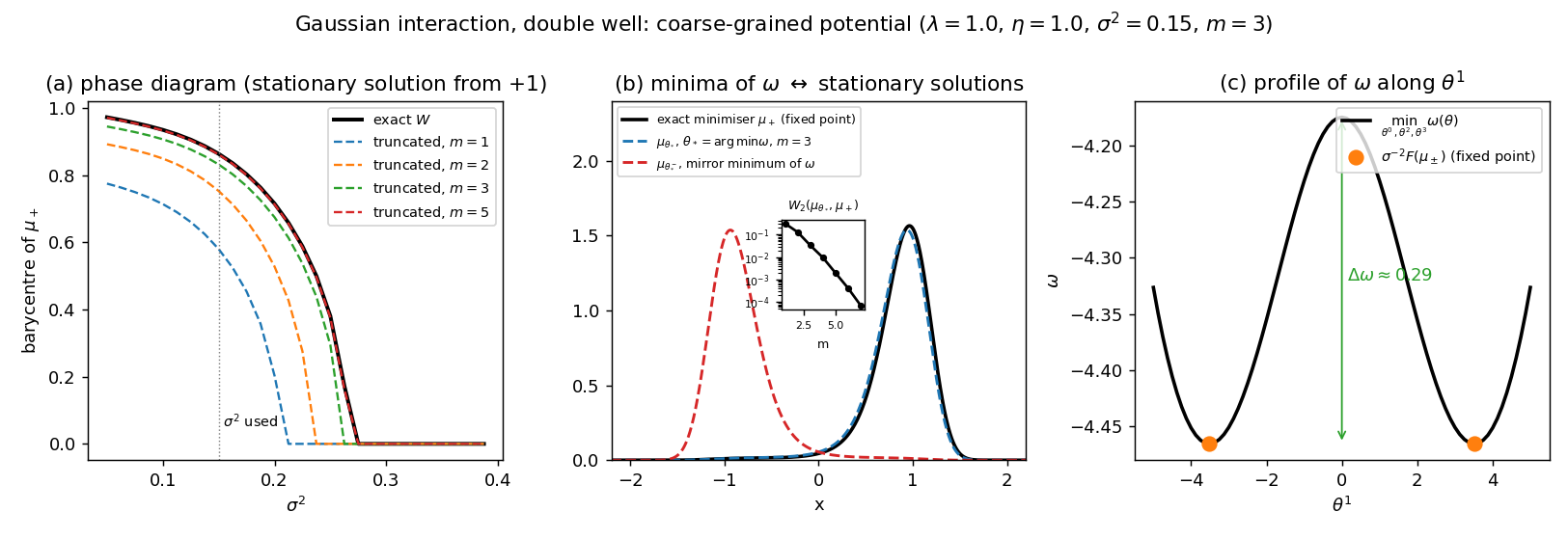}
    \caption{Gaussian interaction, double-well potential, $\lambda = \eta = 1$, $\sigma^2 = 0.15$, computed on a grid. (a) Barycentre of the stationary solution obtained from $+1$ versus $\sigma^2$, for the exact interaction and for the truncations at order $m$. (b) Exact minimiser $\mu_+$ versus $\mu_{\theta_*^\pm}$ at the two minima of $\omega$, $m = 3$; inset: $\mathcal W_2(\mu_{\theta_*},\mu_+)$ versus $m$. (c) Profile $\theta^1\mapsto\min_{\theta^0,\theta^2,\theta^3}\omega(\theta)$, with the values $\sigma^{-2}\mathcal F(\mu_\pm)$ of the fixed-point minimisers (dots).}
    \label{fig:nq-landscape}
\end{figure}

\paragraph{Exploring the wells in the vector $\Theta$.}
Figure~\ref{fig:nq-explore} shows a run of~\eqref{eq:nq-coupled} at $\sigma^2 = 0.15$, $\lambda = \eta = 1$, $m = 3$, $\varepsilon = 0.3$, $\beta = 4$, $N = 300$, $T = 3000$. As in Section~\ref{subsec:multi-well}, every $20$ time units we launch a fork descent, here with the exact Gaussian interaction~\eqref{eq:expWtot}. The component $\Theta^1_t$ hops between the two minima $\pm\theta_*^1$ of $\omega$, and the well detected by the fork descent follows its sign: the vector $\Theta_t$ drives the transitions between the two wells, exactly as the scalar $\Theta_t$ did in the quadratic case. All the descents converge to the stationary solutions $\mu_\pm$ (their barycentres are $\pm 0.85$, against $\pm m_*$). Along the run, the particles are distributed according to $\mu_{\Theta_t}$, as stated in Theorem~\ref{thm:averaging}: when $\Theta_t$ is in a well, $\mathcal W_2(\pi_{\bX_t},\mu_{\Theta_t})$ is at the level of the distance between $N$ i.i.d.\ samples of $\mu_{\Theta_t}$ and $\mu_{\Theta_t}$ itself (medians $0.045$ and $0.040$), with a slight lag during the transitions (medians $0.072$ and $0.051$).

\begin{figure}[htbp]
    \centering
    \includegraphics[width=\linewidth]{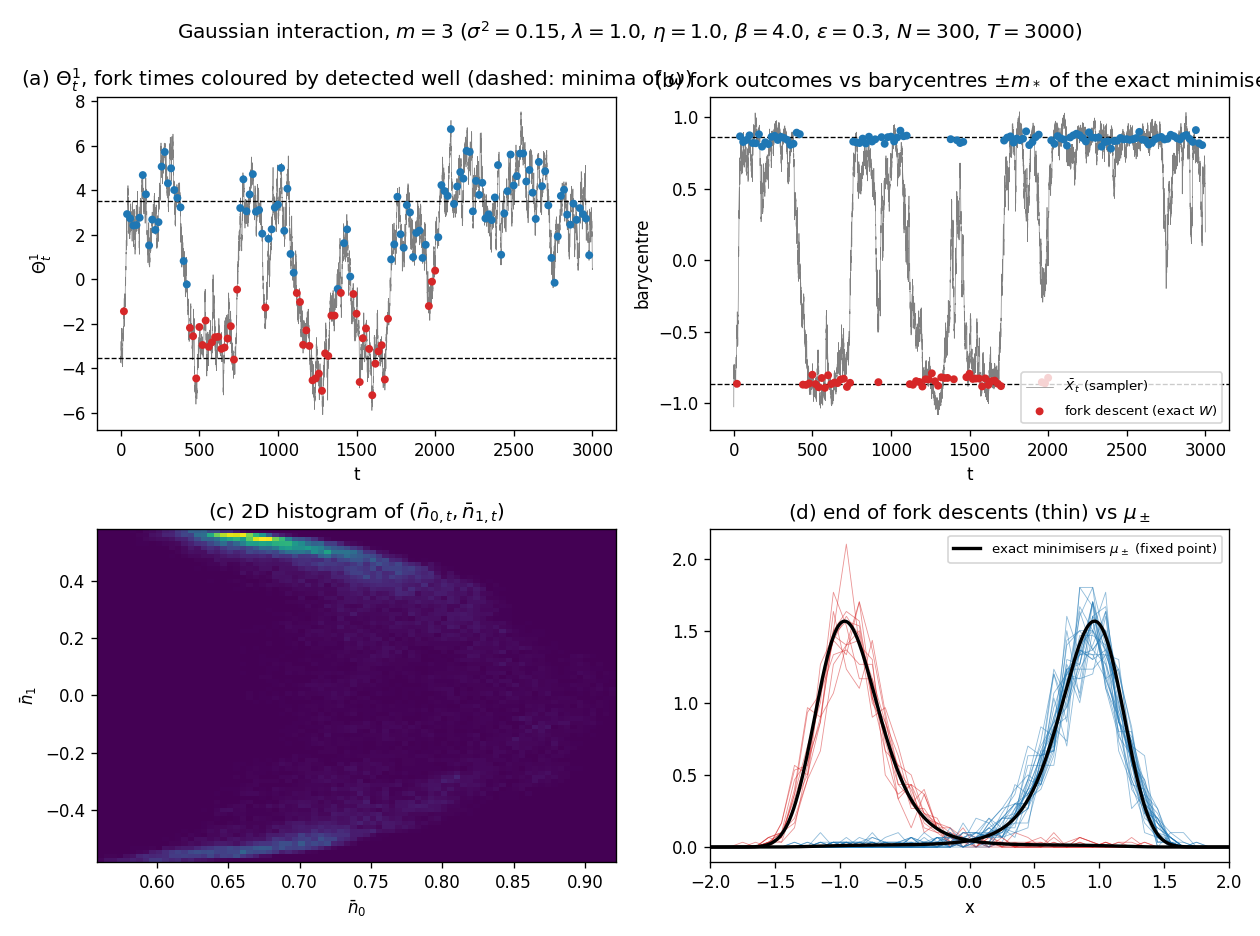}
    \caption{Gaussian interaction, double-well potential, at $\sigma^2 = 0.15$, $\lambda = \eta = 1$, $m = 3$, $\varepsilon = 0.3$, $\beta = 4$, $N = 300$, $T = 3000$. (a) Trajectory of $\Theta^1_t$, with the fork times coloured by the well detected during the descent (dashed: $\pm\theta^1_*$). (b) Barycentre $\bar X_t$ (grey) and barycentres reached by the fork descents with the exact Gaussian interaction (dots); dashed: $\pm m_*$. (c) Two-dimensional histogram of the feature means $(\bar n_{0,t},\bar n_{1,t})$. (d) Final densities of the fork descents versus the stationary solutions $\mu_\pm$.}
    \label{fig:nq-explore}
\end{figure}

\paragraph{Transition times and number of particles.}
For the plain particle system~\eqref{eq:LangevinN}, the equilibrium of the macroscopic variable is  $\nu_N\propto e^{-N\omega_N}$ (see~\eqref{eq:omegaN}) and  transitions between $\mu_+$ and $\mu_-$ are rare events whose rate decays like $e^{-N\Delta\omega}$, while for the two-scale sampler $\Theta_t$ is distributed according to $\nu_\beta\propto e^{-\beta\omega}$ and the transition rate does not depend on $N$. Figure~\ref{fig:nq-exit} compares both, counting the switches of the barycentre between the two wells (with the truncated interaction for the plain system, so that both target the same free energy). For the plain system the rate decays exponentially, from $7\cdot10^{-2}$ at $N = 4$ to $10^{-4}$ at $N = 28$, with a slope $-0.28$ close to $\Delta\omega$; for the sampler it stays between $2.6\cdot10^{-3}$ and $5.3\cdot10^{-3}$ from $N = 10$ to $N = 1000$.

\begin{figure}[htbp]
    \centering
    \includegraphics[width=\linewidth]{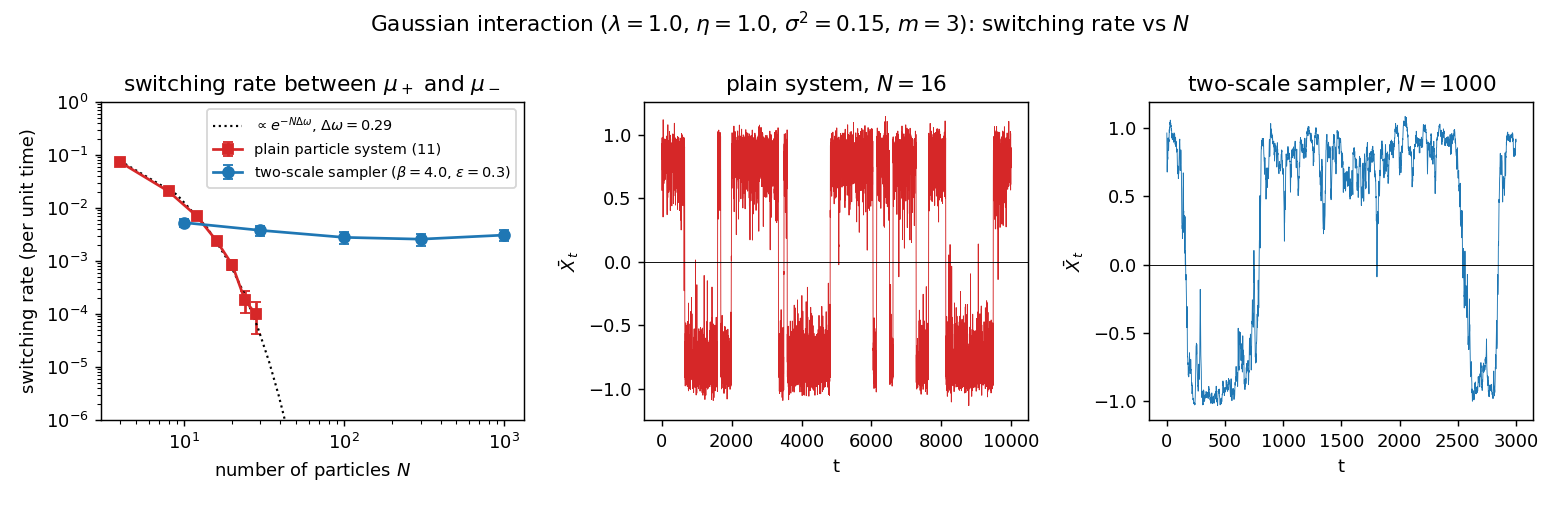}
    \caption{Gaussian interaction, $\sigma^2 = 0.15$, $m = 3$. Left: switching rate between $\mu_+$ and $\mu_-$ versus $N$, for the plain particle system (squares, $12$ runs of length $T = 10^4$ per $N$) and for the two-scale sampler with $\beta = 4$, $\varepsilon = 0.3$ (circles, $8$ runs of length $T = 3000$); error bars $\pm2\sqrt{\#\text{switches}}$; dotted: $e^{-N\Delta\omega}$. Middle and right: barycentre trajectories of the plain system ($N = 16$) and of the sampler ($N = 1000$).}
    \label{fig:nq-exit}
\end{figure}

\paragraph{Long-time distribution of $\Theta_t$ versus the target $\nu_\beta$.}
Finally, as in Sections~\ref{subsec:double-well} and~\ref{subsec:multi-well}, we check that the long-time law of $\Theta_t$ is the coarse-grained Gibbs measure $\nu_\beta\propto e^{-\beta\omega}$ predicted by the averaging principle. Since $\Theta_t\in\R^4$, we compare marginals: as before, $\nu_\beta$ is computed by quadrature, its marginals being obtained by integrating $e^{-\beta\omega}$ over the other components of $\theta$. Figure~\ref{fig:nq-nu} does this at $\beta = 4$, $N = 300$, for $\varepsilon \in\{ 1,\ 0.3,\ 0.1\}$ (horizon $T = 500/\varepsilon$, $8$ trajectories started in the left well). Within each well, the law of $\Theta^1_t$ matches that of $\nu_\beta$ for all three values of $\varepsilon$ (total variation distance $0.02$--$0.04$ after symmetrisation $\theta^1\to-\theta^1$). The relative weight of the two wells, on the other hand, is estimated from a limited number of transitions ($15$--$20$ in total for each value of $\varepsilon$), and the fraction of time with $\Theta^1_t>0$ ($0.25$, $0.27$ and $0.45$) fluctuates around its target $1/2$; it improves with the number of trajectories or the horizon.

\begin{figure}[htbp]
    \centering
    \includegraphics[width=\linewidth]{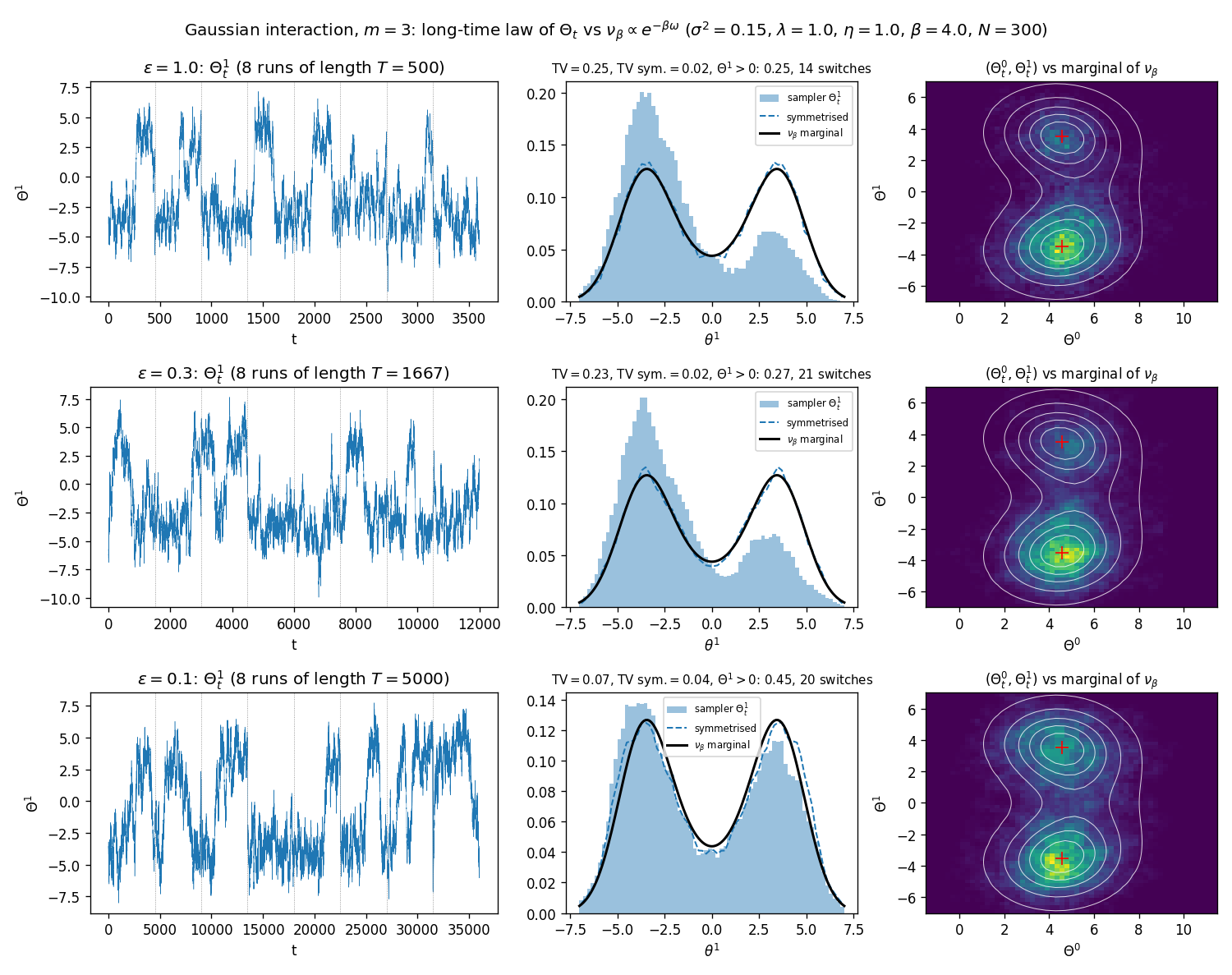}
    \caption{Long-time distribution of $\Theta_t$ at $\sigma^2 = 0.15$, $m = 3$, $\beta = 4$, $N = 300$, aggregated over $8$ trajectories. From top to bottom: $\varepsilon = 1,\ 0.3,\ 0.1$, with $T = 500/\varepsilon$. Left: the trajectories of $\Theta^1_t$, concatenated one after another. Middle: histogram of $\Theta^1_t$ (and its symmetrisation, dashed) against the marginal of $\nu_\beta\propto e^{-\beta\omega}$ computed by quadrature (black). Right: histogram of $(\Theta^0_t,\Theta^1_t)$ with contours of the $(\theta^0,\theta^1)$-marginal of $\nu_\beta$ and the minima of $\omega$ (crosses).}
    \label{fig:nq-nu}
\end{figure}

\clearpage

\section{Proofs}\label{sec:proof}

\subsection{Propagation of chaos}

This section is devoted to the proof of Theorem~\ref{thm:chaos}.

\begin{proposition}\label{prop:well-posed}
In the settings of Theorem~\ref{thm:chaos}, Equations~\eqref{eq:thetaXcouples} and~\eqref{eq:thetamucouples} have unique strong solutions. Moreover, for any $T>0$, $\sup_{t\in[0,T]}\int_{\R^d} |x|^p \nu_t(\dd x)$ is almost surely finite.
\end{proposition}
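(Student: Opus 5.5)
We treat the two systems separately, using classical fixed-point arguments.

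\emph{Particle system~\eqref{eq:thetaXcouples}.} This is a finite-dimensional SDE on $\R^{dN}\times\R^m$ with additive noise. Its drift is
\[
b(\bx,\theta) = \Big(-\na_{x_i}V_N(\bx)+\theta\cdot\na\varphi(x_i)\Big)_{i} \ \oplus\ \Big(-\varepsilon\big(\theta-\pi_{\bx}(\varphi)\big)\Big).
\]
Since $V_N(\bx)=N\mathcal E_0(\pi_{\bx})$, we have $\na_{x_i}V_N(\bx)=D\mathcal E_0(\pi_{\bx},x_i)$. We first check that $b$ is continuous and satisfies a global one-sided Lipschitz bound:
\begin{itemize}
\item The one-sided Lipschitz estimate~\eqref{eq:onesidedLip} in $x_i$, combined with~\eqref{eq:Lipschitzwrtmu} and $\mathcal W_2(\pi_{\bx},\pi_{\by})^2\leqslant \frac1N|\bx-\by|^2$, gives a one-sided bound for the particle part.
\item The cross term $(\theta-\theta')\cdot\na\varphi(x_i)+\theta'\cdot(\na\varphi(x_i)-\na\varphi(y_i))$ is only locally Lipschitz, because of the product of $\theta'$ with $\na\varphi(x)-\na\varphi(y)$.
\end{itemize}
So we only get a local one-sided Lipschitz property, together with a linear growth bound $|b(\bx,\theta)|\leqslant C(1+|\bx|+|\theta|+|\theta||\bx|)$. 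Strong existence and uniqueness then follow from the standard localisation argument: solve up to the exit time of a ball, then prove non-explosion with a Lyapunov function. For that we compute $\dd(|\bX_t|^2+|\Theta_t|^2)$. The dangerous term is $x_i\cdot \na\varphi(x_i)^T\theta$. Since $\na\varphi$ is bounded, it is at most $C|x_i||\theta|\leqslant C(|x_i|^2+|\theta|^2)$, so Gronwall yields finite second moments on $[0,T]$, hence non-explosion. (Continuity of $D\mathcal E_0$ in $x$, needed for local existence, follows from~\eqref{eq:onesidedLip} only as monotonicity, so we rely on the monotone-coefficient theory of Krylov for SDEs rather than the Lipschitz Cauchy–Lipschitz theory.)

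\emph{McKean–Vlasov system~\eqref{eq:thetamucouples}.} We interpret it as the law equation of the nonlinear SDE
\[
\dd\bar X_t = -D\mathcal E_0(\nu_t,\bar X_t)\dd t+\bar\Theta_t\cdot\na\varphi(\bar X_t)\dd t+\sqrt2\dd W_t,
\]
where $\nu_t$ is the conditional law of $\bar X_t$ given $B$. Here $W$ is an auxiliary Brownian motion independent of $B$, and $\bar\Theta$ is driven by $B$. We work pathwise in $B$: fix a realisation of $B$ and define a map $\Phi$ on $\mathcal C([0,T],\mathcal P_2(\R^d))$.
\begin{itemize}
\item Given a flow $(m_t)$, solve the linear SDE $\dd\Theta=-\varepsilon(\Theta-m_t(\varphi))\dd t+\sqrt{2\varepsilon/\beta}\,\dd B_t$ explicitly; it is deterministic given $B$.
\item Then solve the SDE for $\bar X$ with frozen $(m_t,\Theta_t)$. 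This is well posed by the monotone theory, since $\Theta$ is a continuous path and hence bounded on $[0,T]$.
\item Set $\Phi(m)_t=\mathrm{Law}(\bar X_t)$.
\end{itemize}
Next we show that $\Phi$ is a contraction for the metric $\sup_{t\leqslant T}e^{-\gamma t}\mathcal W_2(m_t,m'_t)$, using a synchronous coupling (same $W$, same $\bar X_0$). Writing $\Delta_t=\bar X_t-\bar X'_t$, Itô's formula together with~\eqref{eq:onesidedLip},~\eqref{eq:Lipschitzwrtmu}, the Lipschitz continuity of $\varphi,\na\varphi$ and the bound $\sup_{[0,T]}|\Theta|\leqslant K(B)$ gives
\[
\frac{\dd}{\dd t}\mathbb E|\Delta_t|^2\leqslant C(K)\big(\mathbb E|\Delta_t|^2+\mathcal W_2(m_t,m'_t)^2+|\Theta_t-\Theta'_t|^2\big).
\]
We also have $|\Theta_t-\Theta'_t|\leqslant\varepsilon\int_0^t|m_s(\varphi)-m'_s(\varphi)|\dd s\leqslant \varepsilon\|\na\varphi\|_\infty\int_0^t\mathcal W_1(m_s,m'_s)\dd s$. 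Gronwall then gives the contraction for $\gamma$ large, and Banach's fixed point theorem yields a unique solution for almost every $B$. Measurability in $B$ follows from the construction as a limit of Picard iterates, which also gives a strong solution.

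\emph{Moment bound.} We apply Itô to $|\bar X_t|^p$. The one-sided Lipschitz property applied against $y=0$ gives $x\cdot D\mathcal E_0(\nu,x)\geqslant -L|x|^2-|x||D\mathcal E_0(\nu,0)|$. Moreover $|D\mathcal E_0(\nu,0)|\leqslant |D\mathcal E_0(\delta_0,0)|+L\mathcal W_2(\nu,\delta_0)$, which is controlled by the second moment. The $\Theta$ term is at most $K(B)\|\na\varphi\|_\infty|x|$. This yields
\[
\frac{\dd}{\dd t}\mathbb E|\bar X_t|^p\leqslant C(K(B))\big(1+\mathbb E|\bar X_t|^p\big),
\]
and Gronwall with $\nu_0\in\mathcal P_p$ shows that $\sup_{t\leqslant T}\nu_t(|x|^p)$ is finite almost surely, since $K(B)<\infty$ almost surely.

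\emph{Main obstacle.} The hardest point is that the coupling term $\Theta\cdot\na\varphi(x)$ is not globally Lipschitz in $(x,\theta)$ jointly, and that the constants depend on the random path of $B$. We handle this by conditioning on $B$ and using pathwise bounds such as $K(B)$, rather than estimates in expectation over $B$. For the particle system we use localisation instead.
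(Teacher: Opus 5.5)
Your overall route is the paper's. You use standard SDE theory for the particle system via $\na_{x_i}V_N(\bx)=D\mathcal E_0(\pi_{\bx},x_i)$, and a fixed point for \eqref{eq:thetamucouples}, which the paper only calls ``standard''. For the moments you use It\^o's formula for $|x|^p$ with the one-sided bound at $y=0$, $|D\mathcal E_0(\nu,0)|\leqslant |D\mathcal E_0(\delta_0,0)|+L\mathcal W_2(\nu,\delta_0)$, and Gronwall pathwise in $B$. The paper uses Young's inequality to get a bound linear in $\int_0^T|\bar\Theta_s|^p\dd s$; you get $C(K(B))$ in the exponent, and both are a.s.\ finite. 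You are more careful than the paper in three places. You include the $\mathcal W_2(\pi_{\bx},\pi_{\by})$ contribution in the monotonicity of the particle drift. You notice that the term $\theta\cdot\na\varphi(x_i)$ is not globally Lipschitz. And you use the lower bound $x\cdot D\mathcal E_0(\nu,x)\geqslant -L|x|^2-|x||D\mathcal E_0(\nu,0)|$, which is the direction actually needed for the drift $-D\mathcal E_0$. The printed \eqref{eq:onesidedLip}, and the line $x\cdot D\mathcal E_0(\nu_t,x)\leqslant L|x|^2+\dots$ in the paper's proof, have the sign reversed; taken literally they would exclude $D\mathcal E_0(\nu,x)=x^3$.

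The step that fails as written is the contraction of $\Phi$.
\begin{itemize}
\item The bound $\sup_{[0,T]}|\Theta|\leqslant K(B)$ is not a function of $B$ alone. $\Theta=\Theta^m$ depends on the input flow through $\varepsilon\int_0^t e^{-\varepsilon(t-s)}m_s(\varphi)\dd s$, and under Assumption~\ref{assu:PoC} you only have $|m_s(\varphi)|\leqslant |\varphi(0)|+L_\varphi\int|x|\,m_s(\dd x)$. This is unbounded on $\mathcal C([0,T],\mathcal P_2(\R^d))$.
\item Your coupling estimate must control $|\Theta^{m'}_t|\,|\na\varphi(\bar X_t)-\na\varphi(\bar X'_t)|$, so $C(K)$ depends on $m'$. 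Hence $\Phi$ is Lipschitz only on sets of flows with bounded second moments, not a contraction on the whole space. The problem disappears if $\varphi$ is affine or bounded, but not for a general $\varphi$ with Lipschitz gradient.
\end{itemize}
The repair is standard, using your own moment computation:
\begin{itemize}
\item There is a deterministic $T_0>0$, depending on $L,L_\varphi,\|\na\varphi\|_\infty,\varepsilon$, and an $A$, depending on $\int|x|^2\nu_0(\dd x)$, $|\theta_0|$ and the path of $B$, such that $\Phi$ maps the closed set $\{m:\sup_{t\leqslant T_0}\int|x|^2 m_t(\dd x)\leqslant A\}$ into itself.
\item On that set $\Theta^m$ is uniformly bounded, so your Gronwall argument does give a contraction. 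Then iterate on $[kT_0,(k+1)T_0]$.
\item Uniqueness follows by applying your estimate to two fixed points, with $K$ the larger of their two finite $\Theta$-bounds.
\end{itemize}

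Two smaller inaccuracies. First, your linear growth bound on $b$ is false in general (again $D\mathcal E_0(\nu,x)=x^3$). You never use it, since Krylov's theorem only needs continuity, local boundedness, local monotonicity and coercivity. Second, continuity of $D\mathcal E_0(\nu,\cdot)$ comes from the definition of the intrinsic derivative, $\frac{\delta\mathcal E_0}{\delta m}(\nu,\cdot)\in\mathcal C^1$, not from \eqref{eq:onesidedLip}.
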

\begin{proof}
Since $\na_{x_i} V_N(\bx) = D\mathcal E_0(\pi_{\bX},x_i)$,~\eqref{eq:onesidedLip} implies that 
\begin{equation*}
\forall \bx,\by\in \R^{dN},\qquad (\bx-\by) \cdot \po \na V_N(\bx)  - \na V_N(\by)\pf \leqslant L | \bx-\by|^2\,.
\end{equation*}
Well-posedness for Equation~\eqref{eq:thetaXcouples} then follows from standard results for stochastic differential equations. Similarly, well-posedness for~\eqref{eq:thetamucouples}  follows from a standard fixed-point argument. We only detail the $L^p$ moment for $\nu_t$. Writing $f_p(x)=|x|^p$,
\begin{align*}
\dd \int_{\R^d} f_p \dd \nu_t &= \int_{\R^d} \po \na f_p \cdot \po - D\mathcal E_0(\nu_t,\cdot) + \bar \Theta_t \cdot \na \varphi  \pf +\Delta f_p\pf \dd \nu_t \dd t\\
&\leqslant C\po 1 + |\bar \Theta_t|^p + \int_{\R^d} f_p \dd \nu_t\pf\dd t\,,
\end{align*}
for some $C>0$, where we used Young inequality to bound $|\na f_p||\Theta_t| \leqslant C(f_p+|\Theta_t|^p)$, and we used~\eqref{eq:onesidedLip} to get that
\begin{align*}
x \cdot D\mathcal E_0(\nu_t,x) & \leqslant L|x|^2 + x \cdot D\mathcal E_0(\nu_t,0)\\
& \leqslant L|x|^2 + |x||D\mathcal E_0(\delta_0,0)| + L|x| \mathcal W_2(\nu_t,\delta_0) \\
& = L|x|^2 + |x||D\mathcal E_0(\delta_0,0)| + L|x| \sqrt{\int_{\R^d} f_2\dd \nu_t} \\
& \leqslant L|x|^2 + |x||D\mathcal E_0(\delta_0,0)| + L|x| \po \int_{\R^d} f_p\dd \nu_t\pf^{1/p}
\end{align*}
(and then again Young inequality). Grönwall lemma then shows that, for any $T>0$, there exists $C_T>0$ such that, almost surely,
\[\sup_{t\in[0,T]} \int_{\R^d} f_p \dd \nu_t \leqslant C_T\po  1+ \int_{\R^d} f_p \dd \nu_0 + \int_0^T |\Theta_s|^p \dd s\pf\,, \]
which concludes the proof.
\end{proof}

\begin{proof}[Proof of Theorem~\ref{thm:chaos}]

Given a solution of~\eqref{eq:thetamucouples}, denote by $\bY_t=(Y_t^1,\dots,Y_t^N)$  the solution of
\begin{equation}
\label{eq:thetaYMcKeanV}
\forall i\in\cco 1,N\ccf,\qquad \dd Y_t^i = - D\mathcal E_0(\nu_t,Y_t^i) \dd t + \bar\Theta_t \cdot \na \varphi(Y_t^i) \dd t + \sqrt{2}\dd B_t^i\,,
\end{equation}
with initial conditions $\bY_0=\bX_0$. Write $(\mathcal F_t)_{t\geqslant 0}$ the filtration associated to $(B_t)_{t\geqslant 0}$. Conditionally to $\mathcal F_t$, $Y_t^1,\dots,Y_t^N$ are independent and distributed according to $\nu_t$. In particular, for $i\in\cco 1,N\ccf$, using that $\na_{x_i} V_N(\bx) = D\mathcal E_0(\pi_{\bX},x_i)$,
\[\mathbb E \po |D\mathcal E_0(\nu_t,Y_t^i) - \na_{x_i} V_N(\bY_t)|^2 \ |\ \mathcal F_t\pf \leqslant L^2 \mathbb E \po \mathcal W_2^2(\pi_{\bY_t},\nu_t) \ |\ \mathcal F_t\pf  \leqslant C  N^{-\eta} \po \int_{\R^d} |y|^p \nu_t(\dd y) \pf^{2/p}\,, \]
for some constant $C,\eta>0$  thanks to \cite{FournierGuillin} and Proposition~\ref{prop:well-posed}. 
 Then, since $X^i$ and $Y^i$ are driven by the same Brownian motion $B^i$,
\begin{align*}
\dd |\bX_t- \bY_t|^2 & \leqslant 2(\bX_t - \bY_t)\cdot \po \na V_N(\bX_t) - \na V_N(\bY_t)\pf + 2 \sum_{i=1}^N (X_t^i - Y_t^i) \\
& \quad \times \po   |D\mathcal E_0(\nu_t,Y_t^i) - \na_{x_i} V_N(\bY_t)|  + |\bar\Theta_t||\na \varphi(X_t^i) - \na \varphi(Y_t^i) | + |\Theta_t-\bar\Theta_t| |\na \varphi(X_t^i)|\pf \dd t \\
&\leqslant C (1+|\bar\Theta_t|) \po  |\bX_t-\bY_t|^2 + N |\Theta_t-\bar \Theta_t|^2 + \sum_{i=1}^N  |D\mathcal E_0(\nu_t,Y_t^i) - \na_{x_i} V_N(\bY_t)|^2 \pf\dd t\,,
\end{align*}
for some $C>0$. Similarly,
\begin{align*}
\dd |\bar\Theta_t - \Theta_t|^2 & \leqslant 2\varepsilon|\bar\Theta_t-\Theta_t|\po |\bar\Theta_t-\Theta_t| + L \mathcal W_2(\nu_t,\pi_{\bX_t}) \pf\dd t \\
& \leqslant C \po |\bar\Theta_t-\Theta_t|^2 + \frac1N |\bX_t-\bY_t|^2 + \mathcal W_2^2(\nu_t,\pi_{\bY_t}) \pf \dd t 
\end{align*}
for some $C>0$. Writing 
\[R_t = \mathbb E \po\left. |\bar\Theta_t-\Theta_t|^2 + \frac1N|\bX_t-\bY_t|^2\ \right|\ \mathcal F_t \pf\,, \]
combining the previous bounds (and relying again on \cite{FournierGuillin}) gives
\[\dd R_t \leqslant H_t \po R_t + N^{-\eta}\pf \dd t \]
for some $\mathcal F_t$-measurable random $H_t$, almost surely bounded on any finite time interval, and independent from $N$. By Grönwall lemma, since $R_0=0$, we get that
\[ R_t \leqslant H_t' N^{-\eta}\]
for some $\mathcal F_t$-measurable random  $H_t' <\infty$ (non-decreasing with $t$). Moreover, as we used above,
\[\mathcal W_2^2(\pi_{\bX_t}, \nu_t) \leqslant \frac{2}{N}|\bX_t-\bY_t|^2 + 2\mathcal W_2^2(\nu_t,\pi_{\bY_t})\,,\]
which combined with the previous bound and \cite{FournierGuillin} leads to
\begin{equation}
\label{loc:sdvdlfmlkqsd}
  \mathbb E \po \mathcal W_2^2(\pi_{\bX_t}, \nu_t)  + |\Theta_t - \bar\Theta_t|^2\ |\ \mathcal F_t\pf \leqslant S_t N^{-\eta}\,,
\end{equation}
for an $\mathcal F_t$-measurable   random   $S_t<\infty $ (non-decreasing with $t$). Then, for an arbitrary $K>0$, we bound, for any $t\in [0,T]$ and $\delta>0$,
\begin{align*}
 \mathbb P \po \mathcal W_2(\pi_{\bX_t}, \nu_t)  + |\Theta_t - \bar\Theta_t| \geqslant \delta \pf 
& \leqslant  \mathbb P \po \mathcal W_2(\pi_{\bX_t}, \nu_t)  + |\Theta_t - \bar\Theta_t| \geqslant \delta, \ S_t \leqslant K \pf + \mathbb P \po S_t >K \pf\\
&\leqslant \delta^{-2}  KN^{-\eta} +   \mathbb P\po  S_t> K \pf \,.
\end{align*}
Taking the limsup as $N\rightarrow \infty$ and then letting $K\rightarrow \infty$ concludes the proof.

\end{proof}

\begin{remark}\label{rem:a.s.CV}
From~\eqref{loc:sdvdlfmlkqsd}, we get that, for all $T>0$, almost surely,
\[\sup_{t\in[0,T]} \sup_{N\geqslant 1} N^{\eta} \mathbb E \po \mathcal W_2^2(\pi_{\bX_t}, \nu_t)  + |\Theta_t - \bar\Theta_t|^2\ |\ \mathcal F_t\pf < \infty\,,\]
where $\eta$ is the rate provided by \cite{FournierGuillin} (which depends explicitly on $p$ and $d$; for instance, if $d>4$ and $p=4$, then $\eta=2/d$).
\end{remark}

\subsection{Averaging}

This section is devoted to the proof of Theorem~\ref{thm:averaging}.

\subsubsection{PL inequalities and consequences}

We start with some properties of the free energies $\mathcal F_\theta$, $\theta\in\R^m$.

\begin{proof}[Proof of Proposition~\ref{prop:lambda_thetaPL}]
From the lower-bound~\eqref{eq:lowerboundE0m2}, for any $\theta\in\R^m$,
\begin{align}
\mathcal F_{\theta}(\nu) &\geqslant c_0 \int_{\R^d} |x|^2 \nu(\dd x) - C_0 - |\theta| \po |\varphi_\ell(0)|  + \|\na \varphi_\ell\|_\infty \int_{\R^d} |x|\nu(\dd x)  + \|\varphi_b\|_\infty\pf + \mathcal H(\nu)  \nonumber \\
& \geqslant \frac{c_0}2 \int_{\R^d} |x|^2 \nu(\dd x) - C_0 - |\theta| \po |\varphi_\ell(0)|  +  \|\varphi_b\|_\infty\pf - \frac{1}{2c_0} |\theta|^2 \|\na \varphi_\ell\|_\infty ^2 + \mathcal H(\nu) \nonumber \\
& = \frac{c_0}4 \int_{\R^d} |x|^2 \nu(\dd x) +  \frac{c_0}{4} \mathcal H\po \nu|\gamma\pf - C_1(1+|\theta|^2)\label{loc:sfgkdflsdgz}
\end{align}
for some constant $C_1$, where $\gamma = \mathcal N(0,(2c_0)^{-1} I)$. As a consequence, $\mathcal F_{\theta}$ is lower-bounded, and $\mathcal F_\theta(\nu)$ goes to infinity as $\int_{\R^d} |x|^2 \nu(\dd x) \rightarrow \infty$. By semi-lower continuity, $\mathcal F_\theta$ admits a global minimiser $\mu_\theta$.

 Since the term $\nu \mapsto \theta\cdot \nu(\varphi)$ is linear, 
 \[\mathcal E_\theta(\nu) := \mathcal E_0(\nu) - \theta\cdot \nu(\varphi)\]
 satisfies the same condition~\eqref{eq:convexity-c} as $\mathcal E_0$.
From  \cite[Lemma 3]{monmarche2025local}, for all $\nu\in\mathcal P_2(\R^d)$,
\begin{equation}
\label{loc:dgkjdfgdfg}
\tF_\theta(\nu)  \leqslant \mathcal H\po \nu|\Gamma_\theta(\nu)\pf + \eta\|\nu - \mu_\theta\|_{TV}^2
\end{equation}
 with the probability density
 \[\Gamma_\theta(\nu) \propto \exp \po - \frac{\delta\mathcal E_\theta}{\delta m}(\nu,\cdot)  \pf = \exp \po - \frac{\delta\mathcal E_0}{\delta m}(\nu,\cdot) + \theta \cdot \varphi \pf\,,\]
 and
 \[\mathcal H(\nu|\mu_\theta) \leqslant \tF(\nu) + \eta\|\nu - \mu_\theta\|^2_{TV}\,.\]
 Combining  Pinsker's inequality and the previous ones gives
 \[\|\nu-\mu_\theta\|^2_{TV} \leqslant 2 \mathcal H(\nu|\mu_\theta) \leqslant 2   \mathcal H\po \nu|\Gamma_\theta(\nu)\pf + 4\eta \|\nu-\mu_\theta\|_{TV}^2\,,\]
 so that
 \begin{equation}\label{loc:sdgdfhmlmq}
  \|\nu-\mu_\theta\|^2_{TV} \leqslant \frac{2}{1-4\eta }   \mathcal H\po \nu|\Gamma_\theta(\nu)\pf \,. 
 \end{equation}
Applying this to $\nu$ being a critical point of $\mathcal F_\theta$ (i.e. a solution of $\nu = \Gamma_{\theta}(\nu)$) shows that $\nu = \mu_\theta$. In particular, $\mu_\theta$ is the unique global minimiser of $\mathcal F_\theta$. Besides, combining~\eqref{loc:dgkjdfgdfg} and~\eqref{loc:sdgdfhmlmq} gives
 \begin{equation}
 \label{loc:fùghfg}
 \forall \nu\in\mathcal P_2(\R^d),\qquad  \tF_\theta(\nu) \leqslant \po 1 + \frac{2\eta}{1-4\eta}\pf \mathcal H(\nu|\Gamma_\theta(\nu))\,. 
 \end{equation}

 Under Assumption~\ref{assum:PL}, for any $\theta=(\theta_\ell,\theta_b)\in \R^{m_\ell}\times\R^{m_b}$ and $\nu\in\mathcal P_2(\R^d)$, $\frac{\delta\mathcal E_\theta}{\delta m}(\nu,\cdot)$ is the sum of three functions: a $\kappa$-strongly convex one (including the term $\theta_\ell\cdot \varphi_\ell$) , a $M+|\theta_b|\|\varphi_b\|_\infty$-bounded one and a $L$-Lipschitz one. By the Bakry-Emery condition and the stability of log-Sobolev inequalities with respect to bounded and Lipschitz perturbations (see \cite{BakryGentilLedoux}), for any $\nu \in\mathcal P_2(\R^d)$, $\Gamma_\theta(\nu)$ satisfies 
 \[\forall \nu,\mu \in \mathcal P_2(\R^d),\qquad \mathcal H(\mu|\Gamma_\theta(\nu)) \leqslant \frac{e^{|\theta_b|\|\varphi_b\|_\infty}}{\lambda'} \mathcal I\po \mu|\Gamma_\theta(\nu)\pf\,,\]
 for some $\lambda'>0$ depending only on $\kappa,M$ and $L$. Applying this inequality with $\mu = \nu$, plugging it in~\eqref{loc:fùghfg} and noticing that
 \[\mathcal I\po \nu|\Gamma_\theta(\nu)\pf = \int_{\R^d} \left|\na \ln \nu + \frac{\delta\mathcal E_\theta}{\delta m}(\nu,\cdot) \right|^2 \nu = \mathcal I_\theta(\nu) \]
concludes the proof.
\end{proof}

\begin{lemma}\label{lem:muthetaLip}
In the settings of  Proposition~\ref{prop:lambda_thetaPL}, assuming that $\varphi$ is $L_\varphi$-Lipschitz continuous, 
\begin{equation}
\label{eq:muthetaLipschitz}
\forall \theta,\psi \in \R^m,\qquad \mathcal W_2(\mu_\theta,\mu_\psi) \leqslant \frac{3L_\varphi}{\lambda_\psi} |\psi-\theta|\,.
\end{equation}
\end{lemma}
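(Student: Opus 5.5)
The plan is to use the PL inequality of Proposition~\ref{prop:lambda_thetaPL} for $\mathcal F_\psi$ and evaluate it at $\nu=\mu_\theta$. Then to turn the resulting bound on $\tF_\psi(\mu_\theta)$ into a Wasserstein bound through a Talagrand-type inequality, which follows from PL.

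\textbf{Dissipation at $\mu_\theta$.} Since $\mu_\theta$ is a critical point of $\mathcal F_\theta$, it satisfies $\na\ln\mu_\theta+D\mathcal E_0(\mu_\theta)-\theta\cdot\na\varphi=0$. Hence
\[\mathcal I_\psi(\mu_\theta)=\int_{\R^d}\left|(\theta-\psi)\cdot\na\varphi\right|^2\dd\mu_\theta\leqslant L_\varphi^2|\theta-\psi|^2 .\]
The PL inequality~\eqref{eq:PLFtheta} for $\mathcal F_\psi$ then gives
\[\tF_\psi(\mu_\theta)\leqslant \frac{L_\varphi^2}{2\lambda_\psi}|\theta-\psi|^2 .\]

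\textbf{From PL to Talagrand.} Next I would show the Otto--Villani-type statement: a PL inequality with constant $\lambda_\psi$ for $\mathcal F_\psi$ implies
\[\mathcal W_2(\nu,\mu_\psi)\leqslant \sqrt{2\tF_\psi(\nu)/\lambda_\psi}.\]
The standard route runs the Wasserstein gradient flow~\eqref{eq:nu_theta_fixed} (with $\theta$ replaced by $\psi$) started at $\nu$. Along this flow one has $\frac{\dd}{\dd t}\tF_\psi=-\mathcal I_\psi$ and the metric speed equals $\sqrt{\mathcal I_\psi}$. Therefore
\[\frac{\dd}{\dd t}\sqrt{\tF_\psi}=-\frac{\mathcal I_\psi}{2\sqrt{\tF_\psi}}\leqslant-\sqrt{\lambda_\psi/2}\,\sqrt{\mathcal I_\psi}.\]
Integrating in time gives the length bound
\[\mathcal W_2(\nu,\rho_\infty)\leqslant\int_0^\infty\sqrt{\mathcal I_\psi}\,\dd t\leqslant\sqrt{2\tF_\psi(\nu)/\lambda_\psi}.\]
The flow converges to $\mu_\psi$, since $\tF_\psi\to0$ exponentially and $\mu_\psi$ is the unique minimiser; one also needs the lower bound~\eqref{loc:sfgkdflsdgz} for tightness and lower semicontinuity. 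Combining with the previous step yields
\[\mathcal W_2(\mu_\theta,\mu_\psi)\leqslant \frac{L_\varphi}{\lambda_\psi}|\theta-\psi|.\]
This is better than the constant $3$ claimed, so there is slack for technical losses.

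\textbf{Main obstacle.} The hard step is making the Talagrand step rigorous in this nonlinear, non-convex setting. One needs existence of the gradient flow, the energy dissipation identity, and the metric-derivative bound $|\dot\rho_t|\leqslant\sqrt{\mathcal I_\psi(\rho_t)}$; the theory of \cite{ambrosio2005gradient} requires some regularity here. If that is problematic, I would use a fallback that costs the extra constant. By~\eqref{loc:sdgdfhmlmq} and~\eqref{loc:fùghfg}, together with the LSI of $\Gamma_\psi(\nu)$ (which gives $T_2$ for $\Gamma_\psi(\nu)$ by Otto--Villani applied to a genuine Gibbs measure), one could bound $\mathcal W_2(\mu_\theta,\Gamma_\psi(\mu_\theta))$ by $\mathcal I_\psi(\mu_\theta)$. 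It would then remain to control $\mathcal W_2(\Gamma_\psi(\mu_\theta),\mu_\psi)$ through the fixed-point structure $\mu_\psi=\Gamma_\psi(\mu_\psi)$. I would try the gradient-flow argument first, since it gives the cleanest constant.
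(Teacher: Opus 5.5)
Your proof is correct, and it has the same two-step skeleton as the paper's. First you bound $\tF_\psi(\mu_\theta)$ by $C|\theta-\psi|^2/\lambda_\psi$ using the PL inequality~\eqref{eq:PLFtheta} for $\mathcal F_\psi$, then you convert this into a Wasserstein bound with the transport inequality~\eqref{eq:Talagrand}.

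The difference is in the first step.
\begin{itemize}
\item The paper runs the flow~\eqref{eq:nu_theta_fixed} with parameter $\theta$ and differentiates $\tF_\psi$ along it. Using Young's inequality and $\mathcal I_\psi\leqslant 2\mathcal I_\theta+2L_\varphi^2|\theta-\psi|^2$, it gets $\partial_t\tF_\psi\leqslant -\frac{\lambda_\psi}{2}\tF_\psi+2L_\varphi^2|\theta-\psi|^2$. Only then does it specialise to the stationary solution $\rho_t=\mu_\theta$. This yields $\tF_\psi(\mu_\theta)\leqslant 4L_\varphi^2|\theta-\psi|^2/\lambda_\psi$, hence the constant $2\sqrt2\leqslant 3$.
\item You use the Euler--Lagrange identity $\mathcal I_\theta(\mu_\theta)=0$ directly. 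This makes $\mathcal I_\psi(\mu_\theta)$ explicit and gives the sharper bounds $\tF_\psi(\mu_\theta)\leqslant L_\varphi^2|\theta-\psi|^2/(2\lambda_\psi)$ and $\mathcal W_2(\mu_\theta,\mu_\psi)\leqslant L_\varphi|\theta-\psi|/\lambda_\psi$.
\end{itemize}
Both routes rest on the same fact: the minimiser $\mu_\theta$ is a stationary point of the $\theta$-flow. The paper uses this when it asserts $\rho_t=\mu_\theta$ for all $t$. So your version is strictly simpler for this lemma. What the paper's dynamic computation buys is a template that is reused in the proof of Proposition~\ref{prop:ftildeepsilon}. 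There $\rho_t$ is not stationary and the parameter moves, so no Euler--Lagrange identity is available.

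The step you flag as the main obstacle is not one for this lemma. The paper simply cites~\eqref{eq:Talagrand} from the literature, and your Otto--Villani sketch is the standard proof of that fact. Your fallback route is therefore unnecessary.
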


\begin{proof}
For a fixed $\theta\in\R^m$, along the flow~\eqref{eq:nu_theta_fixed}, by a classical computation (see \cite{ambrosio2005gradient}), for all $t>0$,
\[\partial_t \tF_{\theta}(\rho_t) = - \mathcal I_{\theta}(\rho_t)\,.\]
Keeping the same flow with this $\theta$ and considering another $\psi \in \R^m$,
\[\partial_t \tF_{\psi}(\rho_t) =  - \mathcal I_{\theta}(\rho_{t}) + (\theta-\psi)\cdot  \partial_t \rho_{t}(\varphi) \,.\]
Integrating by parts,
\[|\partial_t\rho_t(\varphi)| = \left|\int_{\R^d} \na \varphi\cdot  \na \co \ln \rho_t + D\mathcal E_0(\rho_t,\cdot) - \theta\cdot \na \varphi \cf \rho_t\right| \leqslant L_\varphi \sqrt{\mathcal I_{\theta}(\rho_t)} \]
and thus
\begin{align*}
\partial_t \tF_{\psi}(\rho_t)
& \leqslant - \frac12 \mathcal I_{\theta}(\nu_{t}) +  \frac12  L_\varphi^2 |\theta-\psi|^2  \\
& \leqslant - \frac14 \mathcal I_{\psi}(\nu_{t}) + 2 L_\varphi^2 |\theta-\psi|^2  \\
&\leqslant - \frac{\lambda_\psi}2  \widetilde{\mathcal F}_{\psi}(\rho_{t}) + 2 L_\varphi^2 |\theta-\psi|^2  \,,
\end{align*}
using the  PL inequality~\eqref{eq:PLFtheta}. Applying this to $\rho_0 = \mu_\theta$ (so that $\rho_t = \mu_\theta$ for all $t\geqslant 0$) gives
\begin{equation}
\label{loc:dfgdfgk}
\widetilde{\mathcal F}_\psi(\mu_{\theta}) \leqslant \frac{4L_\varphi^2}{\lambda_\psi} |\theta-\psi|^2  \,.
\end{equation}
Moreover, the PL inequality is known to imply the transport inequality
\begin{equation}
\label{eq:Talagrand}
\forall \nu\in\mathcal P_2(\R^d)\,,\qquad \mathcal W_2^2(\nu,\mu_\psi) \leqslant \frac{2}{\lambda_\psi} \widetilde{\mathcal F}_\psi(\nu)\,,
\end{equation}
cf. \cite{Pavliotis,CMCV,monmarche2025local}. Combining the two last inequalities establishes~\eqref{eq:muthetaLipschitz}. 
\end{proof}

\begin{lemma}\label{lem:Ftildepsitheta}
Under the settings of Lemma~\ref{lem:muthetaLip},  for all $\theta,\psi\in\R^m$ and $\rho\in\mathcal P_2(\R^d)$, 
\[\tF_{\theta}(\rho) \leqslant  2 \tF_{\psi} (\rho) + \frac{4L_\varphi^2}{\lambda_\psi} |\psi-\theta|^2\,.\]
\end{lemma}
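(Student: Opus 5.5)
The plan is to compare $\mathcal F_\theta$ and $\mathcal F_\psi$ directly, since they differ only by a linear term, and then to control that linear term with the transport inequality~\eqref{eq:Talagrand} for $\mu_\psi$. If $\tF_\psi(\rho)=+\infty$ there is nothing to prove, so assume it is finite. Since $\varphi$ is Lipschitz and $\rho\in\mathcal P_2(\R^d)$, $\rho(\varphi)$ is finite.

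\textbf{Step 1 (exact identity).} For every $\nu$ we have $\mathcal F_\theta(\nu)=\mathcal F_\psi(\nu)+(\psi-\theta)\cdot\nu(\varphi)$. By Proposition~\ref{prop:lambda_thetaPL}, $\inf\mathcal F_\theta=\mathcal F_\theta(\mu_\theta)$, and this equals $\mathcal F_\psi(\mu_\theta)+(\psi-\theta)\cdot\mu_\theta(\varphi)$. Subtracting gives
\[
\tF_\theta(\rho)=\tF_\psi(\rho)-\tF_\psi(\mu_\theta)+(\psi-\theta)\cdot\po\rho(\varphi)-\mu_\theta(\varphi)\pf .
\]
The key feature is the negative term $-\tF_\psi(\mu_\theta)$, which will absorb the contribution coming from $\mu_\theta$ below.

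\textbf{Step 2 (linear term).} Since $\varphi$ is $L_\varphi$-Lipschitz, testing any coupling gives
\[
|\rho(\varphi)-\mu_\theta(\varphi)|\leqslant L_\varphi\mathcal W_1(\rho,\mu_\theta)\leqslant L_\varphi\mathcal W_2(\rho,\mu_\theta).
\]
The triangle inequality through $\mu_\psi$ then gives $\mathcal W_2(\rho,\mu_\theta)\leqslant \mathcal W_2(\rho,\mu_\psi)+\mathcal W_2(\mu_\psi,\mu_\theta)$. I apply~\eqref{eq:Talagrand} to each of these two distances, obtaining $\mathcal W_2(\nu,\mu_\psi)\leqslant\sqrt{2\tF_\psi(\nu)/\lambda_\psi}$ for $\nu=\rho$ and for $\nu=\mu_\theta$.

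\textbf{Step 3 (Young's inequality).} For $a\geqslant 0$, Young's inequality gives
\[
L_\varphi|\psi-\theta|\sqrt{2a/\lambda_\psi}\leqslant a+\frac{L_\varphi^2}{2\lambda_\psi}|\psi-\theta|^2 .
\]
Applying this with $a=\tF_\psi(\rho)$ and with $a=\tF_\psi(\mu_\theta)$ bounds the linear term by
\[
\tF_\psi(\rho)+\tF_\psi(\mu_\theta)+\frac{L_\varphi^2}{\lambda_\psi}|\psi-\theta|^2 .
\]
Inserting this into Step 1, the two copies of $\tF_\psi(\mu_\theta)$ cancel. This yields $\tF_\theta(\rho)\leqslant 2\tF_\psi(\rho)+\frac{L_\varphi^2}{\lambda_\psi}|\psi-\theta|^2$, which is stronger than the claimed bound with constant $4L_\varphi^2/\lambda_\psi$.

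If one prefers not to rely on the cancellation, one can instead drop $-\tF_\psi(\mu_\theta)$ and bound $\mathcal W_2(\mu_\psi,\mu_\theta)$ by Lemma~\ref{lem:muthetaLip}, or bound $\tF_\psi(\mu_\theta)$ by~\eqref{loc:dfgdfgk}. That route still gives a constant of order $L_\varphi^2/\lambda_\psi$, but the Young weights must then be tuned to keep the factor $2$ in front of $\tF_\psi(\rho)$.

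The only delicate point is Step 1, namely recognising that the exact identity supplies the negative term $-\tF_\psi(\mu_\theta)$. Everything else is the Lipschitz duality bound, the transport inequality~\eqref{eq:Talagrand} and Young's inequality.
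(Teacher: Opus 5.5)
Your proof is correct, and it departs from the paper's at one step. The paper writes the same identity but immediately discards $-\tF_\psi(\mu_\theta)\leqslant 0$, i.e.\ it only uses $\mathcal F_\psi(\mu_\theta)\geqslant \mathcal F_\psi(\mu_\psi)$. It then controls $\mathcal W_2(\mu_\theta,\mu_\psi)$ by Lemma~\ref{lem:muthetaLip}, whose proof is dynamical: it differentiates $\tF_\psi$ along the flow~\eqref{eq:nu_theta_fixed} started at the stationary point $\mu_\theta$ and uses the PL inequality. Combined with the same Young step on $\mathcal W_2(\rho,\mu_\psi)$, the paper's constants add up to $\frac12+3\leqslant 4$.

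You instead keep the negative term and let it absorb the transport bound $\mathcal W_2^2(\mu_\theta,\mu_\psi)\leqslant 2\tF_\psi(\mu_\theta)/\lambda_\psi$. You therefore need nothing beyond~\eqref{eq:Talagrand} and the Lipschitz bound, and you get the sharper constant $L_\varphi^2/\lambda_\psi$. The paper's route costs it nothing extra only because Lemma~\ref{lem:muthetaLip} is needed later anyway.

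Do state explicitly that $\tF_\psi(\mu_\theta)<\infty$, so the cancellation involves no $\infty-\infty$. This holds since $\mathcal F_\theta(\mu_\theta)=\inf\mathcal F_\theta$ is finite and $\mu_\theta(\varphi)$ is finite for $\mu_\theta\in\mathcal P_2(\R^d)$.

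As a by-product, take your identity with $\rho=\mu_\psi$ and use $\tF_\theta(\mu_\psi)\geqslant 0$. This gives $\tF_\psi(\mu_\theta)\leqslant L_\varphi|\psi-\theta|\sqrt{2\tF_\psi(\mu_\theta)/\lambda_\psi}$, hence $\mathcal W_2(\mu_\theta,\mu_\psi)\leqslant 2L_\varphi|\psi-\theta|/\lambda_\psi$. That is a flow-free proof of Lemma~\ref{lem:muthetaLip} with a better constant.
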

\begin{proof}
Using that $\mu_\theta$ minimises $\mathcal F_\theta$,
\begin{align*}
\tF_\theta(\rho) &= \mathcal F_\theta(\rho) - \mathcal F_\theta(\mu_\theta) \\
& = \mathcal F_\psi(\rho) + (\psi-\theta)\cdot\po \rho(\varphi) - \rho(\mu_\theta)\pf   -  \mathcal F_\psi(\mu_\theta) \\
& \leqslant \tF_\psi(\rho) + L_\varphi|\psi-\theta| \po \mathcal W_2(\rho,\mu_\psi) + \mathcal W_2(\mu_\theta,\mu_\psi)\pf   \\
& \leqslant \tF_\psi(\rho) + \frac{\lambda_\psi}{2} \mathcal W_2^2(\rho,\mu_\psi) +   \frac{4L_\varphi^2}{\lambda_\psi}  |\psi-\theta|^2 \,.
\end{align*}
where we used Lemma~\ref{lem:muthetaLip}. The transport inequality~\eqref{eq:Talagrand} concludes the proof.
\end{proof}

\subsubsection{Truncation}

Since the PL constant $\lambda_\theta$ provided in~\eqref{eq:lambdathetapropr2} is not bounded from below uniformly  in $\theta$, we need to consider a truncated process for which this constant is uniform in time. We will then prove the averaging principle for the truncated process, for an arbitrary truncation parameter. Conclusion will follow by letting the truncation parameter go to infinity, using a control uniform on $\varepsilon$ on the probability that the initial and modified processes split before a given time $T>0$. This section is devoted to this point.

For an arbitrary $R\geqslant 1$ (considered as fixed until the very end of the proof of Theorem~\ref{thm:averaging}, where we will send it to infinity), introduce the function $g\in\mathcal C^2(\R^m,\R^m)$ given by
\begin{equation}
\label{loc:g}
g(\theta_\ell,\theta_b) = \po \theta_\ell, \chi(|\theta_b|/R) \theta_b \pf \,.
\end{equation}
for some $\mathcal C^2$ cut-off  function $\chi$, non-increasing, with $\chi(r)=1$ for $r\leqslant 1$ and $\chi(r) = 1/r $ for $r\geqslant 2$. Then $g$ is $1$-Lipschitz continuous and its second derivative is bounded by some constant $C_g>0$ uniformly over $R\geqslant 1$. Moreover, Proposition~\ref{prop:lambda_thetaPL} implies that 
\begin{equation}
\label{eq:lambdag}
\lambda := \inf_{\theta\in\R^m} \lambda_{g(\theta)} > 0\,.  
\end{equation}
This will enable to prove the averaging principle for the truncated process in the next section.

Since $\theta = g(\theta)$ as long as $|\theta_b| \leqslant R$, we will use the following to eventually let $R\rightarrow \infty$.

\begin{lemma}\label{lem:exit}
For any $T>0$, decomposing the solutions of~\eqref{eq:thetamucouples-epsilon} and~\eqref{eq:Theta0} as $(\Theta_{\ell,t}^\varepsilon,\Theta_{b,t}^\varepsilon) \in \R^{m_\ell}\times \R^{m_b}$ and $(\Theta_{\ell,t}^0,\Theta_{b,t}^0) \in \R^{m_\ell}\times \R^{m_b}$,
\[\sup_{\varepsilon\in(0,1]} \mathbb P\po \sup_{t\in[0,T]} |\Theta_{b,t}^\varepsilon| + |\Theta_{b,t}^0| \geqslant K \pf  \underset{K\rightarrow \infty}\longrightarrow 0 \,. \]
\end{lemma}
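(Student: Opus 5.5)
The key observation is that the $\Theta$-equations in~\eqref{eq:thetamucouples-epsilon} and~\eqref{eq:Theta0} are Ornstein--Uhlenbeck equations with an additional drift, $\nu_t^\varepsilon(\varphi)$ or $\mu_{\Theta_t^0}(\varphi)$, whose bounded component is uniformly bounded. Indeed, since $\varphi_b$ is bounded, $|\nu(\varphi_b)|\leqslant \|\varphi_b\|_\infty$ for every probability measure $\nu$, whatever $\varepsilon$. The $b$-component therefore solves
\[
\dd \Theta_{b,t}^\varepsilon = -\Theta_{b,t}^\varepsilon \dd t + \nu_t^\varepsilon(\varphi_b)\dd t + \sqrt{2/\beta}\,\dd B_{b,t}\,,
\]
where $B_b$ is the corresponding block of $B$. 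The same equation holds for $\Theta^0_{b}$, with $\mu_{\Theta_t^0}(\varphi_b)$ in place of $\nu_t^\varepsilon(\varphi_b)$. The linear component $\Theta_\ell$ plays no role, since the $b$-equation is not coupled to it except through the bounded drift term.

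By variation of constants, $\Theta_{b,t}^\varepsilon = e^{-t}\Theta_{b,0} + \int_0^t e^{-(t-s)}\nu_s^\varepsilon(\varphi_b)\dd s + \sqrt{2/\beta}\int_0^t e^{-(t-s)}\dd B_{b,s}$. Alternatively, to avoid stochastic convolutions, one can write $\Theta_{b,t}^\varepsilon = \Theta_{b,0} + \int_0^t(-\Theta^\varepsilon_{b,s} + \nu_s^\varepsilon(\varphi_b))\dd s + \sqrt{2/\beta}B_{b,t}$ and apply Grönwall pathwise. This gives
\[
\sup_{t\in[0,T]}|\Theta_{b,t}^\varepsilon| \leqslant e^{T}\Bigl(|\Theta_{b,0}| + T\|\varphi_b\|_\infty + \sqrt{2/\beta}\sup_{t\in[0,T]}|B_{b,t}|\Bigr)=:Z_T\,,
\]
and the identical bound for $\Theta^0_b$. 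The random variable $Z_T$ depends neither on $\varepsilon$ nor on the process, only on the initial condition and the driving Brownian motion. It is almost surely finite, since $\mathbb E|\Theta_0|^2<\infty$ by assumption and $\sup_{[0,T]}|B|$ is integrable.

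Hence $\mathbb P(\sup_{t\in[0,T]}|\Theta_{b,t}^\varepsilon|+|\Theta_{b,t}^0|\geqslant K)\leqslant \mathbb P(2Z_T\geqslant K)$ uniformly in $\varepsilon\in(0,1]$, and the right-hand side tends to $0$ as $K\to\infty$, for instance by Markov's inequality. The only point needing care is that the processes are well defined, so that $\nu_t^\varepsilon$ is a probability measure and $\mu_{\Theta_t^0}$ exists for all $\theta$. The latter is given by Proposition~\ref{prop:lambda_thetaPL}. For $\Theta^0$, existence up to time $T$ needs the drift $\theta\mapsto\mu_\theta(\varphi)$ to be locally Lipschitz, which follows from Lemma~\ref{lem:muthetaLip} and the Lipschitz continuity of $\varphi$. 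I expect no real obstacle beyond this. Unboundedness of $\varphi_\ell$ is harmless because only the $b$-component is controlled.
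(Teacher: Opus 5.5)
Your proof is correct, and it takes a different, more elementary route than the paper.

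The paper works with moments. It first uses the dissipativity of the Ornstein--Uhlenbeck drift to get the time-uniform bound $\sup_{t\geqslant 0}\mathbb E(|\Theta^\varepsilon_{b,t}|^2)\leqslant \mathbb E(|\Theta_{b,0}|^2)+\|\varphi_b\|_\infty^2+4m_b\beta^{-1}$. It then applies It\^o's formula to $|\Theta^\varepsilon_{b,t}|^2$ and controls the martingale term with the Burkholder--Davis--Gundy inequality, which bounds $\mathbb E(\sup_{t\in[0,T]}|\Theta^\varepsilon_{b,t}|)$ uniformly in $\varepsilon$. Markov's inequality concludes, and the same argument covers $\Theta^0_b$.

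You use the same structural fact: the $b$-drift is $-\Theta_b$ plus a term bounded by $\|\varphi_b\|_\infty$, because $\nu^\varepsilon_t$ and $\mu_{\Theta^0_t}$ are probability measures. But you exploit it pathwise. Gr\"onwall on the integral equation dominates both suprema by a single random variable $Z_T$, built only from $\Theta_{b,0}$ and $\sup_{[0,T]}|B_b|$. This $Z_T$ is independent of $\varepsilon$ because all the processes share $\Theta_0$ and $B$.

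What your route buys:
\begin{enumerate}
\item It avoids stochastic calculus altogether.
\item The tail of $Z_T$ vanishes as soon as $\Theta_0$ is almost surely finite, so no second moment is needed.
\item It gives a slightly stronger conclusion, with $\sup_{\varepsilon}$ inside the probability.
\end{enumerate}

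What the paper's route buys: it would survive less rigid settings, for instance state-dependent noise, where no pathwise domination by $\sup|B|$ is available. It also gives time-uniform second moments as a by-product. Neither is needed for this lemma.
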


\begin{proof}
First,
\[\partial_t \mathbb E \po |\Theta_{b,t}^\varepsilon|^2 \pf \leqslant - \mathbb E \po |\Theta_{b,t}^\varepsilon|^2 \pf + \|\varphi_b\|_\infty^2 + 4m_b\beta^{-1}\,,\]
which ensures that
\begin{equation}\label{loc:dzsdfsdf}
\sup_{t\geqslant 0} \mathbb E \po |\Theta_{b,t}^\varepsilon|^2 \pf \leqslant M_b := \mathbb E \po |\Theta_{b,0}^\varepsilon|^2 \pf  + \|\varphi_b\|_\infty^2 + 4m_b\beta^{-1}\,.
\end{equation}
The same goes for $\Theta_{b,t}^0$. Next,
\[\dd |\Theta_{b,t}^\varepsilon|^2  \leqslant \po - |\Theta_{b,t}^\varepsilon|^2   + \|\varphi_b\|_\infty^2 + 4m_b\beta^{-1}\pf \dd t + 2\sqrt{2/\beta}|\Theta_{b,t}^\varepsilon|\dd B_t\,,\]
and thus, integrating in time,
\[|\Theta_{b,t}^\varepsilon|^2  \leqslant  t \po \|\varphi_b\|_\infty^2 + 4m_b\beta^{-1}\pf  + 2\sqrt{2/\beta}\int_0^t|\Theta_{b,s}^\varepsilon|\dd B_s\,.\]
The Burkholder-Davis-Gundy inequality and~\eqref{loc:dzsdfsdf} gives
\[\mathbb E \po \sup_{t\in[0,T]} |\Theta_{b,t}^\varepsilon| \pf \leqslant 8\beta^{-1} T M_b,\]
for any $\varepsilon\in(0,1]$, and similarly for $|\Theta_{b,t}^0|$. The Markov inequality concludes the proof.

\end{proof}

\subsubsection{Averaging for the truncated process}

In this section, for a fixed $R\geqslant 1$ and $g$ given in~\eqref{loc:g}, we consider the truncated processes
\begin{equation}
\label{eq:thetamucouples-epsilon-g}
\left\{
\begin{array}{rcl}
\partial_t \rho_t^\varepsilon &=&  \frac1\varepsilon \na\cdot \po \rho_t^\varepsilon\co  D\mathcal E_0(\rho_t^\varepsilon,\cdot) -  g\po \Psi_t^\varepsilon\pf \cdot \na \varphi\cf \pf  + \frac1\varepsilon \Delta\rho_t^\varepsilon  \\
\dd  \Psi_t^\varepsilon &=& -  \Psi_t^\varepsilon  + \rho_t^\varepsilon (\varphi)   + \sqrt{2/\beta} \dd B_t\,,
\end{array}\right.
\end{equation}
and
\begin{equation}
\label{eq:thetamucouples-0-g}
\dd  \Psi_t^0 = -  \Psi_t^0  + \mu_{g(\Psi_t^0)} (\varphi)   + \sqrt{2/\beta} \dd B_t\,,
\end{equation}
with initial conditions $\Psi_0^\varepsilon = \Psi_0^0 = \Theta_0$ and $\rho_0^\varepsilon = \nu_0$. The goal of this section is to establish the following:

\begin{proposition}
\label{prop:averaging}
Under the settings of Theorem~\ref{thm:averaging}, there exists $\varepsilon_0\in(0,1)$  such that for any $T>0$, there exists $C_T>0$ such that, for all $\varepsilon\in(0,\varepsilon_0]$, setting $ t_\varepsilon := \frac{2\varepsilon}{\lambda} \ln \varepsilon^{-1}$,
\begin{equation}
\label{eq:propaveraging}
\sup_{t\in[0,T]} \mathbb E \po |\Psi_t^\varepsilon- \Psi_t^0|^2  \pf + \sup_{t\in[t_\varepsilon,T]} \mathbb E \po  \mathcal W_2^2(\rho_t^\varepsilon,\mu_{g(\Psi_t^0)}) \pf  \leqslant C_T \varepsilon\ln \varepsilon^{-1}\,.
\end{equation}
\end{proposition}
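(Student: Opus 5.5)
The plan is to control the free energy gap $A_t := \tF_{g(\Psi_t^\varepsilon)}(\rho_t^\varepsilon)$ by a Lyapunov-type differential inequality, and then close a Grönwall estimate for $|\Psi_t^\varepsilon-\Psi_t^0|^2$.

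\textbf{Step 1: dissipation of $A_t$.} Since $\Psi^\varepsilon$ is an Itô process and $\theta\mapsto \tF_\theta(\rho)=\mathcal F_0(\rho)-\theta\cdot\rho(\varphi)+\inf\mathcal F_\theta$ is smooth in $\theta$, I would apply Itô's formula to $\tF_{g(\Psi_t^\varepsilon)}(\rho_t^\varepsilon)$. The $\rho$-part gives $-\frac1\varepsilon \mathcal I_{g(\Psi_t^\varepsilon)}(\rho_t^\varepsilon)\dd t$. The $\theta$-part involves $\na_\theta \tF_\theta(\rho)=\mu_\theta(\varphi)-\rho(\varphi)$, using the envelope theorem $\na_\theta\inf\mathcal F_\theta=-\mu_\theta(\varphi)$. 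This is bounded by $L_\varphi\po\mathcal W_2(\rho,\mu_\theta)\pf\leqslant L_\varphi\sqrt{2A/\lambda}$ by the transport inequality~\eqref{eq:Talagrand}. The drift of $g(\Psi^\varepsilon)$ is controlled by $|\Psi^\varepsilon|+|\rho^\varepsilon(\varphi)|$, with $|\rho(\varphi)|\lesssim 1+\mathcal W_2(\rho,\mu_{g(\Psi)})+|g(\Psi)|$ via Lemma~\ref{lem:muthetaLip}. The Itô correction requires a bound on $\na^2_\theta\inf\mathcal F_\theta=-\na_\theta\mu_\theta(\varphi)$, i.e. on the Lipschitz constant of $\theta\mapsto\mu_\theta(\varphi)$. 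Lemma~\ref{lem:muthetaLip} gives this uniformly, with $\lambda$ from~\eqref{eq:lambdag}, together with $C_g$. If needed, I would avoid second derivatives by working with a time discretisation or by using semiconcavity of $\theta\mapsto\inf\mathcal F_\theta$ (it is an infimum of affine functions in $\theta$, hence concave). Concavity gives the Itô correction a favourable sign, up to the $g$ composition handled by $C_g$. Using the PL inequality $\mathcal I\geqslant 2\lambda A$, uniform thanks to~\eqref{eq:lambdag}, together with Young's inequality, I expect
\[\dd A_t\leqslant\po -\frac{\lambda}{\varepsilon}A_t + C(1+|\Psi_t^\varepsilon|^2)\pf\dd t+\dd M_t\,,\]
where $M$ is a martingale. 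Taking expectations, with $\mathbb E|\Psi_t^\varepsilon|^2$ bounded on $[0,T]$ as in Lemma~\ref{lem:exit} (now also for the linear component, via moment bounds on $\rho(\varphi_\ell)$), gives
\[\mathbb E A_t\leqslant e^{-\lambda t/\varepsilon}\mathbb E A_0+C\varepsilon\,.\]
Here $\mathbb E A_0<\infty$ follows from the assumption on $\mathcal F_{\Theta_0}(\nu_0)$ and Lemma~\ref{lem:Ftildepsitheta}. Hence $\mathbb E A_t\leqslant C\varepsilon$ for $t\geqslant t_\varepsilon$, since $e^{-\lambda t_\varepsilon/\varepsilon}=\varepsilon^2$. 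Moreover $\int_0^T\mathbb E A_t\,\dd t\leqslant C\varepsilon(1+\ln\varepsilon^{-1})$.

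\textbf{Step 2: closing on $\Psi$.} The two equations share the same $B$, so
\[\frac{\dd}{\dd t}|\Psi^\varepsilon-\Psi^0|^2\leqslant 2|\Psi^\varepsilon-\Psi^0|\po|\rho^\varepsilon(\varphi)-\mu_{g(\Psi^\varepsilon)}(\varphi)|+|\mu_{g(\Psi^\varepsilon)}(\varphi)-\mu_{g(\Psi^0)}(\varphi)|\pf\,.\]
The second term is at most $\frac{3L_\varphi^2}{\lambda}|\Psi^\varepsilon-\Psi^0|$ by Lemma~\ref{lem:muthetaLip}, since $g$ is $1$-Lipschitz. The first is at most $L_\varphi\sqrt{2A_t/\lambda}$. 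Young's inequality and Grönwall then give
\[\sup_{[0,T]}\mathbb E|\Psi^\varepsilon_t-\Psi^0_t|^2\leqslant C_T\int_0^T\mathbb EA_t\,\dd t\leqslant C_T\varepsilon\ln\varepsilon^{-1}\,.\]
For the measure, I would write
\[\mathcal W_2(\rho_t^\varepsilon,\mu_{g(\Psi_t^0)})\leqslant\sqrt{2A_t/\lambda}+\frac{3L_\varphi}{\lambda}|\Psi_t^\varepsilon-\Psi^0_t|\,,\]
and both terms are controlled for $t\geqslant t_\varepsilon$.

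\textbf{Main obstacle.} The delicate step is Step 1: justifying the Itô/chain rule for $\tF_{g(\Psi_t)}(\rho_t)$, where the measure flow is random but of finite variation. The key difficulty is controlling the second-order $\theta$-term and the growth term $|\Psi^\varepsilon|\,\sqrt{A}$ without losing the $1/\varepsilon$ dissipation. I would first try to exploit concavity of $\theta\mapsto-\inf\mathcal F_\theta$ (actually convexity of $\omega-\frac12|\theta|^2$ sign conventions aside), and otherwise regularise in time.
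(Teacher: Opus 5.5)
Your argument is correct, but it takes a genuinely different route from the paper. The paper never differentiates $\theta\mapsto\inf\mathcal F_\theta$:
\begin{itemize}
\item it gets uniform moments by applying It\^o's formula to $\mathcal F_{g(\Psi_t)}(\rho_t)$, which is affine in $\theta$ (Lemma~\ref{lem:epsilon});
\item it proves $\mathbb E|\Psi_{t+s}-\Psi_t|^2\leqslant \mathrm{K}_T s$ (Lemma~\ref{lem:Rs});
\item in Proposition~\ref{prop:ftildeepsilon}, it freezes the parameter at $g(\Psi_t)$, lets $\rho$ relax over a window of length $\delta$, and transfers back to $g(\Psi_{t+\delta})$ with Lemma~\ref{lem:Ftildepsitheta}.
\end{itemize}
This yields $2\mathrm{M}_T e^{-\lambda\delta/(2\varepsilon)}+C\delta$, and balancing with $\delta=t_\varepsilon$ is exactly where the factor $\ln\varepsilon^{-1}$ comes from.

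You instead write a Lyapunov inequality for the moving-target gap $A_t$. The key point is the envelope identity $\na_\theta\tF_\theta(\rho)=\mu_\theta(\varphi)-\rho(\varphi)$, which is $O(\sqrt{\tF_\theta(\rho)})$ by~\eqref{eq:Talagrand}. So the motion of $\Psi$ only produces an $O(1)$ source term against the $\lambda/\varepsilon$ dissipation. This bypasses Lemmas~\ref{lem:Rs} and~\ref{lem:Ftildepsitheta}. It even gives the sharper bound $\mathbb E A_t\leqslant e^{-\lambda t/\varepsilon}\mathbb E A_0+C\varepsilon$, hence $O(\varepsilon)$ rather than $O(\varepsilon\ln\varepsilon^{-1})$ for both terms of~\eqref{eq:propaveraging}. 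The price is an It\^o formula for $\theta\mapsto\inf\mathcal F_{g(\theta)}$, which is only $C^{1,1}$ (its gradient is Lipschitz on the range of $g$ by Lemma~\ref{lem:muthetaLip}). So you need a mollification step, whereas the paper only differentiates quantities that are affine in $\theta$.

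A few points need correcting, none fatal.

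\textbf{Sign of the It\^o correction.} Concavity of $\theta\mapsto\inf\mathcal F_\theta$ makes $\theta\mapsto\tF_\theta(\rho)$ \emph{convex}. Hence the It\^o correction coming from $-\inf\mathcal F_{g(\Psi_t)}$ is nonnegative, i.e. unfavourable, and your concavity fallback does not work as stated. What you actually need is the upper bound $\na^2_\theta\po-\inf\mathcal F_\theta\pf\preceq \frac{3L_\varphi^2}{\lambda}I$ on the range of $g$, from Lemma~\ref{lem:muthetaLip}. You also need the remark that the $\na^2 g$ term multiplies $\mu_{g(\Psi)}(\varphi)-\rho(\varphi)=O(\sqrt{A})$. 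Your primary route already uses both.

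\textbf{Moment bound on $\Psi$.} The bound on $\sup_{[0,T]}\mathbb E|\Psi_t^\varepsilon|^2$ cannot be obtained for the linear component independently of $A$, because $|\rho_t(\varphi)|^2\leqslant C(1+A_t+|\Psi^\varepsilon_t|^2)$ by~\eqref{loc:nutvarphi}. Run Gr\"onwall jointly on $\mathbb E(A_t+|\Psi_t^\varepsilon|^2)$, absorbing $CA_t$ into $-\frac{\lambda}{\varepsilon}A_t$ for $\varepsilon\leqslant\varepsilon_0$, as in Lemma~\ref{lem:epsilon}.

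\textbf{Initial condition.} Check $\mathbb E A_0<\infty$ directly rather than via Lemma~\ref{lem:Ftildepsitheta}, which would bring in $1/\lambda_{\Theta_0}$, not necessarily integrable. Since $\Theta_0-g(\Theta_0)$ only has $\varphi_b$-components, \eqref{loc:sfgkdflsdgz} gives $\tF_{g(\Theta_0)}(\nu_0)\leqslant\mathcal F_{\Theta_0}(\nu_0)+|\Theta_0|\|\varphi_b\|_\infty+C_1(1+|\Theta_0|^2)$.
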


\begin{remark}\label{rem:2}
In the case where $\varphi$ is linear (i.e. $m_b=0$) then $g(\theta)=\theta$ for all $\theta\in\R^m$, hence the processes~\eqref{eq:thetamucouples-epsilon-g} and~\eqref{eq:thetamucouples-0-g} are equal to the original ones~\eqref{eq:thetamucouples-epsilon} and~\eqref{eq:Theta0}. In this situation, Proposition~\ref{prop:averaging} is a stronger, more quantitative result than Theorem~\ref{thm:averaging}. 
\end{remark}

In order to prove Proposition~\ref{prop:averaging}, we start with some auxiliary results.

\begin{lemma}\label{lem:epsilon}
Under the settings of Theorem~\ref{thm:averaging}, there exists $\varepsilon_0>0$ such that, for all $T>0$,
\begin{equation}\label{eq:moment2}
\mathrm{M}_T := \sup_{t\in[0,T]} \sup_{\varepsilon\in(0,\varepsilon_0]} \mathbb E \po |\Psi_t^\varepsilon|^2 + \tF_{g(\Psi_t^\varepsilon)}(\rho_t^\varepsilon) + |\rho_t^\varepsilon(\varphi)|^2 \pf  < \infty \,.
\end{equation}
\end{lemma}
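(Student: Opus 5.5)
The plan is to build a Lyapunov functional combining $|\Psi_t^\varepsilon|^2$ and $\tF_{g(\Psi_t^\varepsilon)}(\rho_t^\varepsilon)$, differentiate it along~\eqref{eq:thetamucouples-epsilon-g}, and use the strong dissipation $-\frac1\varepsilon\mathcal I$ coming from the fast equation. The truncation guarantees the uniform PL constant $\lambda$ of~\eqref{eq:lambdag}, which makes that dissipation coercive. The key identity is
\[
\tF_\theta(\rho)=\mathcal F_\theta(\rho)+\omega(\theta)-\tfrac12|\theta|^2,\qquad \omega(\theta)=\tfrac12|\theta|^2-\inf\mathcal F_\theta .
\]
By the envelope principle, since $\mu_\theta$ is the unique minimiser (Proposition~\ref{prop:lambda_thetaPL}), $\na_\theta\inf\mathcal F_\theta=-\mu_\theta(\varphi)$. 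This gradient is Lipschitz in $\theta$ on the range of $g$, with constant $3L_\varphi^2/\lambda$, by Lemma~\ref{lem:muthetaLip}. Consequently $\theta\mapsto\tF_{g(\theta)}(\rho)$ is $\mathcal C^{1,1}$, with gradient $Dg(\theta)^T\bigl(\mu_{g(\theta)}(\varphi)-\rho(\varphi)\bigr)$ and a Hessian bounded by $C(1+|\rho(\varphi)|+|\mu_{g(\theta)}(\varphi)|)$, using $C_g$. I will apply an Itô formula for $\mathcal C^{1,1}$ functions, after a mollification in $\theta$ if needed. Combined with $\partial_t\tF_\theta(\rho_t)=-\frac1\varepsilon\mathcal I_\theta(\rho_t)$ for the frozen-$\theta$ part, this gives
\[
\dd\, \tF_{g(\Psi_t)}(\rho_t)\leqslant\Bigl[-\tfrac1\varepsilon\mathcal I_{g(\Psi_t)}(\rho_t)+\bigl|\mu_{g(\Psi_t)}(\varphi)-\rho_t(\varphi)\bigr|\,\bigl(|\Psi_t|+|\rho_t(\varphi)|\bigr)+C\bigl(1+|\rho_t(\varphi)|+|\mu_{g(\Psi_t)}(\varphi)|\bigr)\Bigr]\dd t+\dd M_t .
\]
Here $M_t$ is a local martingale.

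Next I bound the terms. The PL inequality gives $\mathcal I\geqslant 2\lambda\tF$. The transport inequality~\eqref{eq:Talagrand} gives $|\mu_{g(\Psi)}(\varphi)-\rho(\varphi)|\leqslant L_\varphi\sqrt{2\tF/\lambda}$. From the second moment bound implicit in~\eqref{loc:sfgkdflsdgz} applied at $\mu_\theta$ (whose free energy is at most that of a fixed reference measure, hence $\leqslant C(1+|\theta|^2)$), we get $|\mu_\theta(\varphi)|\leqslant C(1+|\theta|)$; since $|g(\theta)|\leqslant|\theta|$, the same holds with $g(\Psi)$. Hence $|\rho(\varphi)|\leqslant C(1+|\Psi|+\sqrt{\tF})$, which handles the third summand of~\eqref{eq:moment2} once the first two are controlled. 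By Young's inequality the drift of $\tF$ is then at most $-\frac{\lambda}{\varepsilon}\tF+C\tF+C(1+|\Psi|^2)$. For the slow variable, $\dd|\Psi|^2\leqslant\bigl(-|\Psi|^2+|\rho(\varphi)|^2+C\bigr)\dd t+\dd M'_t\leqslant C(1+|\Psi|^2+\tF)\dd t+\dd M'_t$.

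Set $\mathcal L_t=|\Psi_t|^2+\tF_{g(\Psi_t)}(\rho_t)$. Choose $\varepsilon_0$ with $\lambda/\varepsilon_0\geqslant 2C$, so that the $\tF$ contributions are absorbed for $\varepsilon\leqslant\varepsilon_0$. This yields $\dd\mathcal L_t\leqslant C(1+\mathcal L_t)\dd t+\dd(\text{loc. mart.})$ with $C$ independent of $\varepsilon$. Localize with stopping times, take expectations, apply Grönwall, and use Fatou to obtain $\sup_{t\leqslant T,\varepsilon\leqslant\varepsilon_0}\mathbb E\mathcal L_t\leqslant e^{CT}(1+\mathbb E\mathcal L_0)$.

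It remains to check that $\mathbb E\mathcal L_0<\infty$. The hypothesis controls $\mathcal F_{\Theta_0}(\nu_0)$, and $\inf\mathcal F_\theta\geqslant -C(1+|\theta|^2)$ by~\eqref{loc:sfgkdflsdgz}, so $\mathbb E\,\tF_{\Theta_0}(\nu_0)<\infty$. Lemma~\ref{lem:Ftildepsitheta} with $\psi=\Theta_0$, $\theta=g(\Theta_0)$ transfers this to $\tF_{g(\Theta_0)}(\nu_0)$. That lemma produces a factor $1/\lambda_{\Theta_0}$, which may be unbounded; to avoid it I would apply the lemma the other way, with $\psi=g(\Theta_0)$, or go directly through $\mathcal F_{g(\Theta_0)}(\nu_0)\leqslant\mathcal F_{\Theta_0}(\nu_0)+|\Theta_0-g(\Theta_0)|\,\|\varphi_b\|_\infty$.

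I expect the main obstacle to be the rigorous justification of the chain rule for $t\mapsto\tF_{g(\Psi_t)}(\rho_t)$: it mixes a Wasserstein gradient flow in $\rho$ with an Itô differential in $\Psi$, and $\theta\mapsto\inf\mathcal F_\theta$ is only known to be $\mathcal C^{1,1}$. I would handle it by freezing $\theta$ on small time intervals (or mollifying in $\theta$) and passing to the limit, using the Lipschitz bound of Lemma~\ref{lem:muthetaLip}.
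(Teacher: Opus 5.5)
Your argument is correct. Its skeleton matches the paper's: a Lyapunov functional combining the free energy with $|\Psi_t^\varepsilon|^2$, the PL inequality with the uniform constant $\lambda$ of~\eqref{eq:lambdag} to absorb the free-energy terms once $\varepsilon\leqslant\varepsilon_0$, then Grönwall. The genuine difference is \emph{which} free energy you differentiate.

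The paper applies Itô's formula to $\mathcal F_{g(\Psi_t)}(\rho_t)$, not to $\tF_{g(\Psi_t)}(\rho_t)$. Since $\theta\mapsto\mathcal F_\theta(\nu)$ is affine with gradient $-\nu(\varphi)$, the $\theta$-derivatives only involve $\na g$ and $\Delta g$. The drift is then bounded by $-\frac{2\lambda}{\varepsilon}\tF+3|\rho_t(\varphi)|^2+|\Psi_t|^2+C$. The term $|\rho_t(\varphi)|^2$ is controlled via~\eqref{eq:Talagrand} and Lemma~\ref{lem:muthetaLip} relative to $\mu_0$, where you use second-moment coercivity instead; both work. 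The functional $\mathcal F_{g(\Psi_t)}(\rho_t)+|\Psi_t|^2$ is not sign-definite, and the paper returns to $\tF$ only at the end, through the lower bound $\inf\mathcal F_\theta\geqslant -C_1(1+|\theta|^2)$ from~\eqref{loc:sfgkdflsdgz}.

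This sidesteps everything you flag as your main obstacle. The paper never needs the envelope identity $\na_\theta\inf\mathcal F_\theta=-\mu_\theta(\varphi)$, the $\mathcal C^{1,1}$ bound on the range of $g$, or an Itô formula for $\mathcal C^{1,1}$ functions. In fact your computation splits exactly into the paper's one plus an Itô formula for the finite-dimensional process $h(g(\Psi_t))$, where $h(\theta)=\inf\mathcal F_\theta$. That extra piece is easily handled by mollification, since you only need an upper bound on the drift.

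Your route does buy three things. Your functional $|\Psi|^2+\tF$ is nonnegative, so the localisation and Fatou step is immediate. Your first-order term has size $\sqrt{\tF}\,(|\Psi|+|\rho(\varphi)|)$ rather than $|\rho(\varphi)|^2$. And you check explicitly that $\mathbb E\,\tF_{g(\Theta_0)}(\nu_0)<\infty$, which the paper leaves implicit.

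In both versions, the chain rule along the coupled Wasserstein-flow and Itô dynamics is used formally.
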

\begin{proof}
To alleviate notations we write $(\Psi,\rho)=(\Psi^\varepsilon,\rho^\varepsilon)$. Let $\lambda$ be given by~\eqref{eq:lambdag}, and recall that $g$ is $1$-Lipschitz continuous and $|\Delta g|\leqslant C_g$.  Applying It$\bar{\text{o}}$ formula to $(\nu,\theta) \mapsto  \mathcal F_{g(\theta)}(\nu)$,
\begin{align}
\partial_t \mathbb E \po \mathcal F_{g(\Psi_t)}(\rho_t)\pf &= \mathbb E \po - \frac{1}{\varepsilon} \mathcal I_{g(\Psi_t)}(\rho_t)  + \rho_t(\varphi)\cdot \na g(\Psi_t) \po   \rho_t(\varphi) - \Psi_t\pf + \rho_t(\varphi)\cdot \beta^{-1} \Delta g(\Psi_t) \pf  \nonumber\\
&\leqslant  \mathbb E \po - \frac{2\lambda}{\varepsilon} \tF_{g(\Psi_t)}(\rho_t)  + 3|\rho_t(\varphi)|^2 + |\Psi_t|^2 \pf + C_g^2 \beta^{-2} \,.\label{loc:Fthetaepsilon}
\end{align}
Besides, writing $L_\varphi$ the Lipschitz constant of $\varphi$,
\begin{align}
|\rho_t(\varphi)|^2 &\leqslant 3|\rho_t(\varphi)- \mu_{g(\Psi_t)}(\varphi)|^2 + 3 |\mu_{g(\Psi_t)}(\varphi)-\mu_0(\varphi)|^2 + 3|\mu_0(\varphi)|^2\nonumber\\
&\leqslant 3L_\varphi^2 \mathcal W_2^2 (\rho_t,\mu_{g(\Psi_t)}) + 3 L_{\varphi}^2 \mathcal W_2^2(\mu_{g(\Psi_t)},\mu_0) + 3|\mu_0(\varphi)|^2\nonumber\\
&\leqslant 6 L_\varphi^2 \lambda^{-1} \tF_{g(\Psi_t)}(\rho_t) + 27L_{\varphi}^4 \lambda^{-2} |\Psi_t|^2 + 3|\mu_0(\varphi)|^2\,,\label{loc:nutvarphi}
\end{align}
where we used the transport inequality~\eqref{eq:Talagrand}, Lemma~\ref{lem:muthetaLip} and that $|g(\psi)|\leqslant |\psi|$. Finally,
\begin{align*}
\partial_t \mathbb E \po |\Psi_t|^2 \pf &= \frac{4m}{\beta}+ 2 \mathbb E \po \Psi_t  (-  \Psi_t + \rho_t (\varphi))\pf \ \leqslant \  \frac{4m}{\beta}+  \mathbb E \po | \rho_t (\varphi)|^2 \pf\,.
\end{align*}
Combining the three previous inequalities, we get a constant $C_0>0$ independent from $\varepsilon$ such that 
\begin{align*}
\partial_t \mathbb E \po \mathcal F_{g(\Psi_t)}(\rho_t) +  |\Psi_t|^2  \pf
&\leqslant  C_0 + \mathbb E \po \po C_0- \frac{2\lambda}{\varepsilon}\pf \tF_{g(\Psi_t)}(\rho_t) + C_0 |\Psi_t|^2 \pf \,.
\end{align*}
Setting $\varepsilon_0 = 2\lambda/C_0$ we get, for any $\varepsilon\in(0,\varepsilon_0]$,
\[\partial_t \mathbb E \po \mathcal F_{g(\Psi_t)}(\rho_t) +  |\Psi_t|^2  \pf
\leqslant  C_0 +     C_0 \mathbb E \po  |\Psi_t|^2 \pf\,.\]
From~\eqref{loc:sfgkdflsdgz},
\[0 \leqslant \mathbb E \po \tF_{g(\Psi_t)}(\rho_t)\pf \leqslant \mathbb E \po \mathcal F_{g(\Psi_t)}(\rho_t)   + C_1(1+|\Psi_t|^2)\pf  \]
and thus we get, for some $C_2>0$,
\[0 \leqslant \mathbb E \po \tF_{g(\Psi_t)}(\rho_t) + |\Psi_t|^2 \pf \leqslant \mathbb E \po \mathcal F_{g(\Psi_0)}(\rho_0) + |\Psi_0|^2 \pf +  C_2 t + C_2 \int_0^t|\Psi_s|^2\dd s\]
 Grönwall lemma and~\eqref{loc:nutvarphi} concludes the proof.  
\end{proof}

In the rest of this section, $\varepsilon_0$ is as given by Lemma~\ref{lem:epsilon}.

\begin{lemma}\label{lem:Rs}
Under the settings of Theorem~\ref{thm:averaging}, for all $T>0$, there exists $\mathrm{K}_T >0$ such that for all $\varepsilon\in(0,\varepsilon_0]$, $t\in[0,T]$ and $s\in(0,1]$,
\[\mathbb E \po |\Psi_t^\varepsilon - \Psi_{t+s}^\varepsilon|^2 \pf \leqslant \mathrm{K}_T s\,.\]
\end{lemma}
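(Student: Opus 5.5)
The bound follows directly from the integral form of the $\Psi^\varepsilon$ equation in~\eqref{eq:thetamucouples-epsilon-g}, combined with the uniform moment bounds of Lemma~\ref{lem:epsilon}. For $t\in[0,T]$ and $s\in(0,1]$ we write
\[\Psi_{t+s}^\varepsilon - \Psi_t^\varepsilon = \int_t^{t+s} \po -\Psi_u^\varepsilon + \rho_u^\varepsilon(\varphi)\pf \dd u + \sqrt{2/\beta}\,\po B_{t+s}-B_t\pf\,,\]
and use $(a+b)^2\leqslant 2a^2+2b^2$ to treat the drift and noise contributions separately.

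For the drift term, Cauchy--Schwarz in time gives
\[\mathbb E\po \left|\int_t^{t+s} \po -\Psi_u^\varepsilon + \rho_u^\varepsilon(\varphi)\pf \dd u\right|^2\pf \leqslant s\int_t^{t+s} 2\,\mathbb E\po |\Psi_u^\varepsilon|^2 + |\rho_u^\varepsilon(\varphi)|^2\pf \dd u \leqslant 2 s^2\, \mathrm M_{T+1}\,.\]
Here $u\leqslant T+1$, so Lemma~\ref{lem:epsilon} applied with horizon $T+1$ bounds the integrand uniformly in $\varepsilon\in(0,\varepsilon_0]$. The essential point is that this lemma controls $\mathbb E|\rho_u^\varepsilon(\varphi)|^2$ uniformly in $\varepsilon$, even though the dynamics of $\rho^\varepsilon$ is fast. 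Since $s\leqslant 1$, we have $s^2\leqslant s$.

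For the noise term, $\mathbb E|B_{t+s}-B_t|^2 = ms$, so this part contributes $2ms/\beta$. Altogether,
\[\mathbb E|\Psi_{t+s}^\varepsilon-\Psi_t^\varepsilon|^2 \leqslant 4\mathrm M_{T+1}s + 4ms/\beta,\]
and we may take $\mathrm K_T = 4\mathrm M_{T+1}+4m/\beta$.

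There is no real obstacle beyond checking that Lemma~\ref{lem:epsilon} applies on $[0,T+1]$. This holds because that lemma gives finiteness of $\mathrm M_T$ for every $T$.
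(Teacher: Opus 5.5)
Your proof is correct and follows the paper's argument: write the increment in integral form, split drift and noise, apply Cauchy--Schwarz in time with the uniform moment bound of Lemma~\ref{lem:epsilon}, and use $\mathbb E|B_{t+s}-B_t|^2 = ms$. Your bookkeeping is slightly more careful than the paper's, since you explicitly apply Lemma~\ref{lem:epsilon} on the horizon $T+1$ to cover $t+s>T$.
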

\begin{proof}
This directly follows from Lemma~\ref{lem:epsilon} and 
\[\mathbb E \po |\Psi_t^\varepsilon - \Psi_{t+s}^\varepsilon|^2 \pf \leqslant 2 s \int_t^{t+s} \mathbb E \po |\Psi_u^\varepsilon|^2 + |\rho_u(\varphi)|^2 \pf \dd u + \frac{4}{\beta} \mathbb E \po |B_{t+s} - B_t|^2 \pf\,.
\]
\end{proof}

\begin{proposition}\label{prop:ftildeepsilon}
Under the settings of Theorem~\ref{thm:averaging}, for all $T>0$, there exists $\mathrm{K}_T'>0$ such that for all $\varepsilon \in(0,\varepsilon_0]$, setting $t_\varepsilon := \frac{2\varepsilon}{\lambda}\ln \varepsilon^{-1}$ (with $\lambda$ given by~\eqref{eq:lambdag}),
\[\sup_{t\in[t_\varepsilon,T]}\mathbb E \po \mathcal W_2^2(\mu_{g(\Psi_{t}^\varepsilon)},\rho_{t}^\varepsilon) \pf  \leqslant \mathrm{K}_T' \varepsilon \ln \varepsilon^{-1}\,. \]
\end{proposition}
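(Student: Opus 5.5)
The plan is to work with the relative free energy $\tF_{g(\Psi_t^\varepsilon)}(\rho_t^\varepsilon)$ instead of the Wasserstein distance. Thanks to the transport inequality~\eqref{eq:Talagrand} together with the uniform bound~\eqref{eq:lambdag},
\[\mathcal W_2^2(\mu_{g(\Psi_t^\varepsilon)},\rho_t^\varepsilon)\leqslant \frac{2}{\lambda}\tF_{g(\Psi_t^\varepsilon)}(\rho_t^\varepsilon)\,,\]
so it suffices to show that $u_t:=\mathbb E\big(\tF_{g(\Psi_t^\varepsilon)}(\rho_t^\varepsilon)\big)$ is at most $\mathrm K\,\varepsilon\ln\varepsilon^{-1}$ on $[t_\varepsilon,T]$. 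I will derive a differential inequality for $u_t$ by applying It\^o's formula to $(\nu,\theta)\mapsto \tF_{g(\theta)}(\nu)=\mathcal F_{g(\theta)}(\nu)-\mathcal F_{g(\theta)}(\mu_{g(\theta)})$. This is the same computation as~\eqref{loc:Fthetaepsilon}, except that the term $\inf\mathcal F_{g(\theta)}$ now has to be differentiated as well.

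\textbf{First step: regularity of $\theta\mapsto\inf \mathcal F_\theta$.} By the envelope theorem, $\theta\mapsto \inf\mathcal F_\theta=\frac12|\theta|^2-\omega(\theta)$ has gradient $-\mu_\theta(\varphi)$. Lemma~\ref{lem:muthetaLip} then shows that this gradient is $L_\varphi^2\cdot 3/\lambda$-Lipschitz along $g$. Composing with $g$, the It\^o correction coming from the drift and from the Hessian is controlled as follows. The drift contribution is $(\rho_t(\varphi)-\mu_{g(\Psi_t)}(\varphi))\cdot\na g(\Psi_t)(\rho_t(\varphi)-\Psi_t)$. In this expression the difference $|\rho_t(\varphi)-\mu_{g}(\varphi)|\leqslant L_\varphi\mathcal W_2\leqslant L_\varphi\sqrt{2\tF/\lambda}$, which can be absorbed by Young's inequality into $\frac{\lambda}{\varepsilon}\tF$ at the cost of a term $C\varepsilon(|\rho_t(\varphi)|^2+|\Psi_t|^2)$. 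The second-order term is handled by the $C^{1,1}$ bound, where I will need that $\theta\mapsto\mu_{g(\theta)}(\varphi)$ is Lipschitz, so that it has a bounded weak derivative (if necessary I would mollify, or argue through the integrated-in-time version). Together with $\mathcal I_{g}\geqslant 2\lambda\tF_g$ from the PL inequality, I expect to arrive at
\[\partial_t u_t\leqslant -\frac{\lambda}{\varepsilon}u_t + C\big(1+\mathbb E|\Psi_t|^2+\mathbb E|\rho_t(\varphi)|^2\big)\,.\]
Lemma~\ref{lem:epsilon} bounds the bracket by $C(1+\mathrm M_T)$ uniformly in $\varepsilon\leqslant\varepsilon_0$ and $t\leqslant T$.

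\textbf{Second step: Gr\"onwall.} Integrating the inequality gives $u_t\leqslant e^{-\lambda t/\varepsilon}u_0+C'\varepsilon$. Here $u_0=\mathbb E\tF_{g(\Theta_0)}(\nu_0)$ is finite by the assumptions of Theorem~\ref{thm:averaging}, using Lemma~\ref{lem:Ftildepsitheta} to compare $g(\Theta_0)$ with $\Theta_0$ (this requires $|\Theta_0|^2$ integrable, and uses $\lambda_{\Theta_0}\geqslant\lambda_{g(\Theta_0)}$... or directly~\eqref{loc:sfgkdflsdgz}). For $t\geqslant t_\varepsilon$ we have $e^{-\lambda t/\varepsilon}\leqslant\varepsilon^2$, hence $u_t\leqslant C\varepsilon$, which is even better than $\varepsilon\ln\varepsilon^{-1}$. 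The logarithm is only a safety margin, should the It\^o correction produce an extra factor.

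\textbf{Main obstacle.} The delicate point is justifying It\^o's formula for $\inf\mathcal F_{g(\theta)}$, which requires second-order regularity of $\theta\mapsto\mu_\theta(\varphi)$ that is not available. If this fails, I will fall back on a discretisation argument. On time blocks of length $s\sim\varepsilon\ln\varepsilon^{-1}$, I freeze $\theta=g(\Psi^\varepsilon_{t_0})$ and use the decay $\partial_t\tF_\theta\leqslant -\frac{2\lambda}{\varepsilon}\tF_\theta+\dots$ along the frozen target, as in the proof of Lemma~\ref{lem:muthetaLip}, where the perturbation is quadratic in $|g(\Psi_t)-\theta|$. Lemma~\ref{lem:Rs} controls $\mathbb E|\Psi_t-\Psi_{t_0}|^2\leqslant \mathrm K_Ts$, and Lemma~\ref{lem:Ftildepsitheta} switches back from $\tF_\theta$ to $\tF_{g(\Psi_t)}$ at cost $\frac{4L_\varphi^2}{\lambda}|g(\Psi_t)-\theta|^2$. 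Over a block this yields $\tF\lesssim e^{-2\lambda s/\varepsilon}(\text{previous}) + s$, which is exactly where $s=t_\varepsilon$ and the bound $\varepsilon\ln\varepsilon^{-1}$ come from.
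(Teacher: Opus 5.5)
Your proof is correct, but only your fallback matches the paper; your primary route is genuinely different. The paper never differentiates $\Phi(\theta):=\inf\mathcal F_\theta$. It freezes the target at $g(\Psi_t^\varepsilon)$ and uses Lemma~\ref{lem:Rs} to show that $s\mapsto\mathbb E\,\tF_{g(\Psi_t^\varepsilon)}(\rho^\varepsilon_{t+s})$ has derivative at most $-\frac{\lambda}{2\varepsilon}$ times itself plus $\frac{2\mathrm K_TL_\varphi^2}{\varepsilon}s$. It integrates over a single window of length $t_\varepsilon$, starting from the a priori bound $\mathrm M_T$ of Lemma~\ref{lem:epsilon}, so no iteration over blocks is needed. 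It then switches back to $g(\Psi^\varepsilon_{t+t_\varepsilon})$ with Lemma~\ref{lem:Ftildepsitheta}. The factor $\ln\varepsilon^{-1}$ appears because the frozen target sits at distance of order $\sqrt{t_\varepsilon}$ from the current one.

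Your It\^o route is less delicate than you fear. It needs only first-order (Lipschitz) regularity of $\theta\mapsto\mu_\theta(\varphi)$, not second-order. Since $\mathcal F_\theta(\nu)$ is affine in $\theta$, testing the infimum with $\mu_\theta$ and $\mu_\psi$ gives $-(\psi-\theta)\cdot\mu_\psi(\varphi)\leqslant\Phi(\psi)-\Phi(\theta)\leqslant-(\psi-\theta)\cdot\mu_\theta(\varphi)$. Hence $\nabla\Phi(\theta)=-\mu_\theta(\varphi)$ rigorously. Lemma~\ref{lem:muthetaLip}, applied on the range of $g$ (where the PL constant is at least $\lambda$), together with the block structure of $g$ (identity in $\theta_\ell$, with $\varphi_b$ bounded), shows that $\nabla(\Phi\circ g)$ is globally Lipschitz. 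Mollification then gives It\^o's formula in expectation with a bounded second-order term.

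Your inequality $\partial_t u_t\leqslant-\frac{\lambda}{\varepsilon}u_t+C$ follows and yields $u_t\leqslant e^{-\lambda t/\varepsilon}\mathrm M_T+C\varepsilon$. That is the sharper rate $O(\varepsilon)$ on $[t_\varepsilon,T]$, obtained without Lemmas~\ref{lem:Rs} and~\ref{lem:Ftildepsitheta}. The paper's route, in exchange, requires nothing about the $\theta$-dependence of $\inf\mathcal F_\theta$, at the cost of the logarithm.

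One minor slip concerns $u_0$. Using Lemma~\ref{lem:Ftildepsitheta} with $\psi=\Theta_0$ brings in $1/\lambda_{\Theta_0}$, which is not uniformly controlled. Your claim $\lambda_{\Theta_0}\geqslant\lambda_{g(\Theta_0)}$ is unjustified, and the lower bound~\eqref{eq:lambdathetapropr2} is in fact better on the range of $g$. Simply use $u_0\leqslant\mathrm M_T$ from Lemma~\ref{lem:epsilon}, or~\eqref{loc:sfgkdflsdgz} as you suggest.
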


\begin{proof}
Again, we simply write $(\Psi_{t}^\varepsilon,\rho_{t}^\varepsilon) =(\Psi_{t},\rho_{t}) $.
Proceeding as in the proof Lemma~\ref{lem:muthetaLip},
\begin{align*}
\partial_s \mathbb E \po \tF_{g(\Psi_t)}(\rho_{t+s}) \pf &= \partial_{|u=0} \mathbb E \po \tF_{g(\Psi_{t+s})}(\rho_{t+s+u}) + (\Psi_{t+s}-\Psi_t)\cdot \rho_{s+t+u}(\varphi) \pf \\
& = \mathbb E \po - \frac{1}{\varepsilon}\mathcal I _{g(\Psi_{t+s})}(\rho_{t+s}) + (\Psi_{t+s}-\Psi_t)\cdot \partial_s \rho_{s+t}(\varphi) \pf \\ 
& \leqslant   \mathbb E \po - \frac{1}{4\varepsilon}\mathcal I_{g(\Psi_{t+s})}(\rho_{t+s}) + \frac{2L_\varphi^2}{\varepsilon}  |\Psi_{t+s}-\Psi_t|^2 \pf \,. 
\end{align*}
Using the PL inequality~\eqref{eq:PLFtheta} and Lemma~\ref{lem:Rs} gives, for $t\in[0,T]$ and $s\in[0,1]$;
\[\partial_s \mathbb E \po \tF_{g(\Psi_t)}(\rho_{t+s}) \pf \leqslant - \frac{\lambda}{2\varepsilon}  \mathbb E \po \tF_{g(\Psi_t)}(\rho_{t+s}) \pf + \frac{2 \mathrm{K}_TL_\varphi^2}{\varepsilon}  s\,,\]
which we integrate over $s\in[0,\delta]$ for some $\delta\in(0,1]$ to get 
\[\mathbb E \po \tF_{g(\Psi_t)}(\rho_{t+\delta}) \pf \leqslant e^{-\frac{\delta \lambda}{2\varepsilon}} \mathbb E \po \tF_{g(\Psi_t)}(\rho_{t}) \pf + \frac{1 - e^{-\frac{\delta \lambda}{2\varepsilon}}}{\lambda/(2\varepsilon)}  \times \frac{2 \mathrm{K}_T L_\varphi^2}{\varepsilon}  \delta \leqslant \mathrm{M}_T e^{-\frac{\delta \lambda}{2\varepsilon}} + \frac{4 \mathrm{K}_T L_\varphi^2}\lambda  \delta\,.   \]
Then, applying Lemma~\ref{lem:Ftildepsitheta},
 \begin{align*}
 \mathbb E \po \tF_{g(\Psi_{t+\delta})}(\rho_{t+\delta}) \pf & \leqslant \mathbb E \po 2 \tF_{g(\Psi_t)}(\rho_{t+\delta}) + \frac{4L_\varphi^2}{\lambda}|\Psi_{t+\delta} - \Psi_t|^2 \pf  \\
 & \leqslant   2 \mathrm{M}_T e^{-\frac{\delta \lambda}{2\varepsilon}}  + \frac{12 L_\varphi^2}\lambda   \mathrm{K}_T \delta\,.
 \end{align*}
 Applying this for $\delta = t_\varepsilon$ and using~\eqref{eq:Talagrand} concludes the proof.
\end{proof}

 \begin{proof}[Proof of Proposition~\ref{prop:averaging}]
Since the processes are driven by the same Brownian motion $B$,
\begin{align*}
\Psi_t^\varepsilon - \Psi_t^0 &= \int_0^t \po \Psi_s^0 - \Psi_s^\varepsilon + \rho_{s}^\varepsilon(\varphi) - \mu_{g(\Psi_s^0)}(\varphi)  \pf  \dd s \,.
\end{align*}
Using Lemma~\ref{lem:muthetaLip} and that $g$ is $1$-Lipschitz continuous,
\begin{align*}
|\Psi_t^\varepsilon - \Psi_t^0| & \leqslant  \int_0^t \po (1+3 L_\varphi\lambda^{-1})|\Psi_s^0 - \Psi_s^\varepsilon| + |\rho_{s}^\varepsilon(\varphi) - \mu_{g(\Psi_s^\varepsilon)}(\varphi)|  \pf  \dd s  \\
& \leqslant    \int_0^t \po (1+3 L_\varphi\lambda^{-1})|\Psi_s^0 - \Psi_s^\varepsilon| + L_\varphi\mathcal W_2(\rho_{s}^\varepsilon,\mu_{g(\Psi_s^\varepsilon)})  \pf  \dd s\,.
\end{align*}
Taking the square, the expectation and applying Grönwall's lemma, for any $T>0$, there exists $C_T>0$ such that
\[\sup_{t\in[0,T]} \mathbb E \po |\Psi_t^\varepsilon - \Psi_t^0|^2\pf \leqslant C_T \int_0^T \mathbb E \po \mathcal W_2^2(\rho_{s}^\varepsilon,\mu_{g(\Psi_s^\varepsilon)})  \pf  \dd s \,.\]
 Thanks to Lemma~\ref{lem:epsilon} (with~\eqref{eq:Talagrand}) and Proposition~\ref{prop:ftildeepsilon}, distinguishing whether $t\gtrless t_\varepsilon$, 
\[\int_0^T \mathbb E \po \mathcal W_2^2(\rho_{s}^\varepsilon,\mu_{g(\Psi_s^\varepsilon)})  \pf   \dd s \leqslant 2 \lambda^{-1} \mathrm{M}_T t_\varepsilon +  T \mathrm{K}'_T\varepsilon \ln \varepsilon^{-1} \,,\]
which concludes the proof of the bound for the first term in the left-hand side of~\eqref{eq:thmaveraging}. For the second term, it remains to bound, for $t\geqslant t_\varepsilon$,
\[\mathbb E \po \mathcal W_2^2(\rho_{t}^\varepsilon,\mu_{g(\Psi_t^0)}) \pf \leqslant 2\mathbb E \po \mathcal W_2^2(\rho_{t}^\varepsilon,\mu_{g(\Psi_t^\varepsilon)}) + 9L_\varphi^2\lambda^{-2} |\Psi_t^\varepsilon - \Psi_t^0|^2 \pf\,.  \]
The bound already proven for the first term of~\eqref{eq:thmaveraging} and Proposition~\ref{prop:ftildeepsilon} concludes the proof of Proposition~\ref{prop:averaging}.
\end{proof}

\subsubsection{Conclusion of the proof}

\begin{proof}[Proof of Theorem~\ref{thm:averaging}]
Fix $T>t_0>0$.  For an arbitrary $R\geqslant 1$, consider the event
\[\mathcal A_R = \left\{\sup_{t\in [0,T]}|\Theta_{b,t}^\varepsilon|+|\Theta_{b,t}^0| \leqslant R\right\}\,,  \]
and let $g$ be given by~\eqref{loc:g} and $\Psi^\varepsilon,\rho^\varepsilon,\Psi^0$ as in~\eqref{eq:thetamucouples-epsilon-g} and~\eqref{eq:thetamucouples-0-g}. Under the event $\mathcal A_R$, for all $t\in[0,T]$, $(\Psi_t^\varepsilon,\rho_t^\varepsilon,\Psi_t^0)=( \Theta_t^\varepsilon,\nu_t^\varepsilon,\Theta_t^0)$. For $t\in[0,T]$,  we bound
\begin{align*}
\mathbb P \po |\Theta_t^\varepsilon- \Theta_t^0| \geqslant \delta  \pf &  \leqslant \mathbb P \po |\Psi_t^\varepsilon- \Psi_t^0| \geqslant \delta  \pf + \mathbb P(\mathcal A_R^c)\,.
\end{align*}
By the Markov inequality and Proposition~\ref{prop:averaging}, 
\[\limsup_{\varepsilon\rightarrow 0}\sup_{t\in[0,T]} \mathbb P \po |\Theta_t^\varepsilon- \Theta_t^0| \geqslant \delta  \pf \leqslant \mathbb P(\mathcal A_R^c)\,. \]
Using that, for any $t_0>0$, for any $R\geqslant 1$, $t_\varepsilon \leqslant t_0$ for $\varepsilon$ small enough (with $t_\varepsilon$ as in Proposition~\ref{prop:averaging}; it depends on $R$ through $\lambda$), we get with the same argument that
\[\limsup_{\varepsilon\rightarrow 0}\sup_{t\in[t_0,T]} \mathbb P \po \mathcal W_2(\nu_t^\varepsilon,\mu_{\Theta_t^0}) \geqslant \delta  \pf \leqslant \mathbb P(\mathcal A_R^c)\,. \]
Letting $R\rightarrow \infty$ thanks to Lemma~\ref{lem:exit} concludes the proof.
\end{proof}

\subsection*{Acknowledgments}

The research of P. Monmarché is supported by the project CONVIVIALITY (ANR-23-
CE40-0003) of the French National Research Agency. Generative AI tools have been used only for English language in Section~\ref{sec:numerique}.

\bibliographystyle{plain}
\bibliography{biblio}

\end{document}